\newtheoremstyle{mystyle}{}{}{\slshape}{2pt}{\scshape}{.}{ }{}
\newtheoremstyle{etapestyle}{}{}{\itshape}{2em}{\sffamily}{:}{ }{\thmname{#1}}
\newtheoremstyle{definitionstyle}{}{}{}{2pt}{\bfseries}{.}{ }{}
\newtheorem{thm}{Theorem}[section]
\newtheorem{prop}[thm]{Proposition}
\newtheorem{defi}[thm]{Definition}
\newtheorem{proposition}[thm]{Proposition}
\newtheorem{propdef}[thm]{Proposition-Definition}
\theoremstyle{mystyle}
 \theoremstyle{remark}
\theoremstyle{etapestyle}
\theoremstyle{definitionstyle}
\newcommand{\op}{\operatorname}
\newcommand{\rw}{\rightarrow}
\newcommand{\mav}{\mathcal{V}}
\newcommand{\maf}{\mathcal{F}}
\newcommand{\mai}{\mathcal{I}}
\newcommand{\mah}{\mathcal{H}}
\newcommand{\mao}{\mathcal{O}}
\newcommand{\maz}{\mathcal{Z}}
\newcommand{\mau}{\mathcal{U}}
\newcommand{\gr}{\mathcal{G}}
\newcommand{\taut}{\mathcal{T}}
\newcommand{\si}{\sigma}
\newcommand{\enumr}{\begin{enumerate}[(R1)]\addtolength{\itemsep}{0.8\baselineskip}}
\newcommand{\enumrr}{\begin{enumerate}[(SR2)]\addtolength{\itemsep}{0.8\baselineskip}}
\newcommand{\enuml}{\begin{enumerate}[(L1)]\addtolength{\itemsep}{0.8\baselineskip}}
\newcommand{\enumc}{\begin{enumerate}[(C1)]\addtolength{\itemsep}{0.8\baselineskip}}
\newcommand{\enumM}{\begin{enumerate}[(M1)]\addtolength{\itemsep}{0.8\baselineskip}}
\newcommand{\enumrrr}{\begin{enumerate}[(R3)]\addtolength{\itemsep}{0.8\baselineskip}}
\newcommand{\lo}{\operatorname{log}}
\newcommand{\txd}{T_X(-\lo D)}
\newcommand{\txdd}{T_{X'}(-\lo D')}
\newcommand{\lieg}{\mathfrak{g}}
\newcommand{\mk}{\mathfrak}
\newcommand{\mab}{\mathcal{B}}
\begin{document}

\title{Log homogeneous compactifications of some classical groups}

\author{Mathieu Huruguen}
\date{}
\maketitle

\begin{abstract}
We generalize in positive characteristics some results of Bien and Brion on log homogeneous compactifications of 
a homogeneous space under the action of a connected reductive group. 
We also construct an explicit smooth log homogeneous compactification of the general linear group by successive blow-ups starting from a grassmannian. 
By taking fixed points of certain involutions on this compactification, we obtain
smooth log homogeneous compactifications of the special orthogonal and the symplectic groups.

\end{abstract}

\section*{Introduction}

Let $k$ be an algebraically closed field and $G$ a connected reductive group defined over $k$. 
Given a homogeneous space $\Omega$ under the action of the group $G$ it is natural to consider equivariant 
\textbf{compactifications} or partial equivariant compactifications 
of it. \textbf{Embeddings} are normal irreducible varieties equipped with an action of $G$ and containing $\Omega$ as a dense orbit, and compactifications 
are complete embeddings. 
Compactifications have shown to be powerful tools to produce interesting 
representations of the group $G$ or to solve enumerative problems. In the influent paper \cite{Lu}, Luna and Vust developed a classification theory of embeddings of the 
homogeneous space $\Omega$ assuming that the field $k$ is of characteristic zero. Their theory can be made very explicit and extended to all characteristics, see
for instance \cite{Kn}, in the 
\textbf{spherical} case, 
that is, when a Borel subgroup of $G$ possesses a dense orbit in the homogeneous space $\Omega$. In this case, the embeddings of $\Omega$ are classified 
by combinatorial objects called \textbf{colored fans}. If the homogeneous space is a torus acting on itself by multiplication then one recovers the classification 
of torus embeddings or toric varieties in terms of fans, see for instance \cite{KKMSD}.

\medskip

In the first part of the paper we focus on a certain category of ``good'' compactifications of the homogeneous space $\Omega$. For example, these compactifications
are smooth and the boundaries are strict normal crossing divisors. There are several notions of ``good'' compactifications in the literature. Some of them are defined by 
geometric conditions, as for example the {\bf toroidal} compactifications of Mumford \cite{KKMSD}, the {\bf regular} compactifications of Bifet De Concini and Procesi \cite{Bif}, 
the {\bf log homogeneous} compactifications of Brion \cite{Br} and some of them are defined by conditions from the embedding theory of Luna and Vust, as for example 
the {\bf colorless} compactifications. As it was shown by Bien and Brion \cite{Br}, if the base field $k$ is of characteristic zero then the homogeneous space $\Omega$ 
admits 
a log homogeneous compactification if 
and only if it is spherical, and in that case the four different notions of ``good'' compactifications mentioned above coincide.  
We generalize their results in positive characteristics in Section \ref{first}. We prove that a homogeneous space admitting a log homogeneous compactification 
is necessarily {\bf separably spherical} in the sense of Proposition-Definition \ref{conditionb}. In that case, we relate the log homogeneous compactifications to 
the regular and the 
colorless one, see Theorem \ref{affaiblissement} for a precise statement. 
We do not know whether the condition of being separably spherical is sufficient
for a homogeneous space to have a 
log homogeneous compactification. Along the way we prove Theorem \ref{structurelocale}, which is of independent interest, on the local structure of 
colorless compactifications of
spherical homogeneous 
spaces, generalizing a result of Brion, Luna and Vust, see \cite{Br4}. 

\medskip
In Section \ref{explicit}, we focus on the explicit construction of equivariant compactifications of a connected reductive group. That is, the homogeneous space $\Omega$ is 
a connected reductive group $G$ acted upon by $G\times G$ by left and right translations. The construction of ``good'' compactifications
of a reductive group is a very old problem, with roots in the 19th century in the work of 
Chasles, Schubert, who were motivated by questions from enumerative geometry. 
When the group $G$ is semi-simple there is a particular compactification 
$\overline{G}$ called $\textbf{canonical}$ which possesses interesting properties, making it particularly convenient to work with. 
For example, the boundary is a divisor whose irreducible component intersect properly
and the closure of the $G\times G$-orbits are exactly the partial intersections of these prime divisors. Also, there is a unique closed orbit of $G\times G$ in the 
canonical compactification of $G$. Moreover, every toroidal compactification of $G$ has a dominant equivariant morphism to $\overline{G}$. 
If the canonical compactification $\overline{G}$ is smooth, then it is $\textbf{wonderful}$ in sense of Luna \cite{Lu1}. 
When the group $G$ is of adjoint type, its canonical compactification is smooth, and there are many known constructions of this 
wonderful compactification, see for example \cite{Ty}, \cite{La1}, \cite{La2}, \cite{La3}, \cite{Va}, \cite{ThKl}, \cite{Th}
for the case of the projective linear group $\op{PGL}(n)$ and
\cite{DCP}, \cite{St}, \cite{Br3} for the general case. In general the canonical compactification is not smooth, as it can be seen for example when $G$ is
the special orthogonal group $\op{SO}(2n)$.

\medskip

One way to construct a compactification 
of $G$ is by considering a linear representation $V$ of $G$ and taking the closure of $G$ in the projective space $\mathbb{P}(\op{End}(V))$. The compactifications
arising in this way are called linear. It was shown by De Concini and Procesi \cite{DCP} that the linear compactifications of a semi-simple group of adjoint type 
are of particular interest. Recently, Timashev \cite{Tim}, Gandini and Ruzzi \cite{GanRuz}, found combinatorial criterions for certain linear compactifications to be normal, 
or smooth. In \cite{Gan}, Gandini classifies the linear compactifications of the odd special orthogonal group having one closed orbit. 
By a very new and elegant approach, Martens and Thaddeus \cite{MaTh} recently discovered a general construction of the toroidal compactifications of 
a connected reductive group $G$ 
as the coarse moduli spaces of certain algebraic stacks parametrizing objects called ``framed principal $G$-bundles over chain of lines''.

\medskip

Our approach is much more classical. In Section \ref{explicit}, we construct a log homogeneous compactification $\gr_n$ of the general linear group $\op{GL}(n)$ 
by successive blow-ups, starting from 
a grassmannian. The compactification $\gr_n$ is defined over an arbitrary base scheme. We then identify the compactifications of the special orthogonal group or 
the symplectic group obtained by taking the 
fixed points of certain involutions on the compactification $\gr_n$. This provides a new construction of the wonderful compactification of the odd orthogonal group 
$\op{SO}(2n+1)$, which is of adjoint type, of the symplectic group $\op{Sp}(2n)$, which is not of adjoint type, and of a toroidal desingularization 
of the canonical compactification of the even orthogonal group $\op{SO}(2n)$ having only two closed orbits. This is the minimal number of closed orbits on a smooth 
log homogeneous compactification, as the canonical compactification of $\op{SO}(2n)$ is not smooth.

\medskip

Our procedure is similar to that used by Vainsencher, see \cite{Va}, to construct the wonderful compactification of the projective linear group $\op{PGL}(n)$ or that 
of Kausz, see \cite{Ka}, to construct his compactification of the general linear group $\op{GL}(n)$. However, unlike Kausz, we are not able to describe the functor of points of 
our compactification $\gr_n$. In that direction, we obtained a partial result in \cite{Huphd}, where we describe the set $\gr_n(K)$ for every field $K$. We decided not 
to include this description in the present paper, as it is long and technical.
The functor of points of the wonderful compactification of the projective linear group is described in \cite{ThKl} and that of the symplectic group is described in 
\cite{Ab}.

\section*{Acknowledgments}
This paper is part of my Ph.D thesis. I would like to thank my supervisor Michel Brion for his precious advices and his careful reading. I would also like to thank Antoine 
Chambert-Loir and Philippe Gille for helpful comments on a previous version of this paper.

\section{Log homogeneous compactifications}\label{first}
First we fix some notations.
Let $k$ be an algebraically closed field of arbitrary characteristic $p$. 
By a variety over $k$ we mean a separated integral $k$-scheme of finite type. If $X$ is a variety over $k$ and $x$ is a point of $X$, we denote by $T_{X,x}$ the tangent space of 
$X$ at $x$. If $Y$ is a subvariety of $X$ containing $x$, we denote by $N_{Y/X,x}$ the normal space to $Y$ in $X$ at $x$. 

\medskip

For an algebraic group $G,H,P\ldots$ we denote by the corresponding gothic letter $\mathfrak{g},\mathfrak{h},\mathfrak{p}\ldots$
its Lie algebra. Let $G$ be a connected reductive group defined over $k$. A $G$-variety is a variety equipped with an action of $G$. 
Let $X$ be a $G$-variety. For each point $x\in X$ we denote by $G_x$ the isotropy group scheme of $x$. We also denote by $orb_x$ the morphism
$$orb_x : G\rw X,\quad g\mapsto g\cdot x.$$
The orbit of $x$ under the action of $G$ is called separable if the morphism $orb_x$ is, that is, if its differential is surjective, or, equivalently, if the
group scheme $G_x$ is reduced.

\medskip

We fix a homogeneous space $\Omega$ under the action of $G$. Let $X$ be a smooth compactification of $\Omega$, that is, a complete smooth $G$-variety containing $\Omega$ as an open dense orbit. We suppose that the 
complement $D$ of $\Omega$ in $X$ is a strict normal crossing divisor. 
\medskip

In \cite{Bif}, Bifet, De Concini and Procesi introduce and study the regular compactifications of a homogeneous space over an algebraically closed field 
of characteristic zero.
We generalize their definition in two different ways :

\begin{defi}\label{defreg}
The compactification $X$ is $\textbf{regular}$ (resp. $\textbf{strongly regular}$) if the orbits of $G$ in $X$ are separable, the partial intersections of the 
irreducible components of $D$ are 
precisely the closures of the $G$-orbits in $X$ and, for each point $x\in X$, the isotropy group $G_x$ possesses an open (resp. open and separable) 
orbit in the normal space $N_{Gx/X,x}$ to the orbit $Gx$ in $X$ at the point $x$. 
\end{defi}
If the characteristic of the base field $k$ is zero, then the notion of regular and strongly regular coincide with the original notion of \cite{Bif}. 
This is no longer true in positive characteristic, as we shall see at the end of Section \ref{comparison}.

\medskip

In \cite{Br}, Brion defines the log homogeneous compactifications over an algebraically closed field - throughout his paper the base field is also of characteristic 
zero, but the definition makes sense in arbitrary characteristic.
Recall that the logarithmic tangent bundle $T_X(-\op{log} D)$ is the vector bundle over $X$ whose sheaf of section is the subsheaf of the tangent sheaf of $X$ consisting of 
the derivations that preserve the ideal sheaf $\mathcal{O}_X(-D)$ of $D$. As $G$ acts on $X$ and $D$ is stable under the action of $G$, it is easily seen that 
the infinitesimal action of the Lie algebra $\lieg$ on $X$ gives rise to a natural vector bundle morphism:
$$X\times\lieg\rightarrow \txd.$$
We refer the reader to \cite{Br} for further details.
 \begin{defi}\label{defilh}
The compactification $X$ is called $\textbf{log homogeneous}$ if the morphism of vector bundles on $X$:
$$X\times\lieg\rightarrow \txd$$
is surjective.
\end{defi} 
 
\medskip
Assuming that the characteristic of the base field is zero, Bien and Brion prove in \cite{Bi} that the homogeneous space $\Omega$ possesses a log homogeneous 
compactification if and only if it is spherical. In this case, they also prove that it is equivalent for a smooth compactification $X$ of $\Omega$ to be log homogeneous, regular 
or to have no color - as an embedding of a spherical homogeneous space, see \cite{Kn}.
Their proof relies heavily on a local structure theorem for spherical varieties in characteristic 
zero established by Brion, Luna and Vust in \cite{Br4}. 

\medskip

A generalization of the local structure theorem was obtained by Knop in \cite{Kn1}; essentially, one has to replace in the statement of that theorem an isomorphism
by a finite surjective morphism. In Section \ref{gen}
we shall prove that under a separability assumption, the finite surjective morphism in Knop's theorem is an isomorphism.
Then, in Section \ref{comparison} we prove that the smooth compactification $X$ of $\Omega$ is regular 
if and only if the homogeneous space $\Omega$ is spherical, the embedding $X$ has no color and each closed orbit of $G$ in $X$ is separable (Theorem \ref{sscsr}).
We also prove that the smooth compactification $X$ of $\Omega$ is strongly regular if and only if it is log homogeneous (Theorem \ref{lhfr}). Finally, we exhibit a class 
of spherical homogeneous spaces for which the notion of regular and strongly regular compactifications coincide.
In Section \ref{fixed} we show that log homogeneity is preserved under taking fixed points by an automorphism
of finite order prime to the characteristic of the base field $k$.
In Section \ref{example} we recall the classification of Luna and Vust in the setting of compactification of reductive groups, as this will be useful in Section \ref{explicit}.

\subsection{A local structure theorem}\label{gen}

Let $X$ be a smooth $G$-variety. We assume that there is a unique closed orbit $\omega$ of $G$ in $X$ and that this orbit 
is complete and separable. We fix a point $x$ on $\omega$. The isotropy group $G_x$ is a parabolic subgroup of $G$. We fix a Borel subgroup $B$ of $G$ such that $BG_x$ 
is open in $G$. We fix a maximal torus $T$ of $G$ contained in $G_x$ and $B$ and we denote by $P$ the opposite parabolic subgroup to $G_x$ containing $B$.
We also denote by $L$ the Levi subgroup of $P$ containing $T$ and by $R_u(P)$ the unipotent radical of $P$.
With these notations we have the following proposition, which relies on a result of Knop \cite[Theorem $1.2$]{Kn1}.
\begin{proposition}\label{key1}
There exists an affine open subvariety $X_s$ of $X$ which is stable under the action of $P$ and a closed subvariety $Z$ of $X_s$ stable under the action of $T$, 
containing $x$ such that:
\begin{enumerate}[(1)]
\item The variety $Z$ is smooth at $x$ and the vector space $T_{Z,x}$ endowed with the action of $T$ is isomorphic to the vector space $N_{\omega/X,x}$ endowed with 
the action of $T$.
\item The morphism:
$$\mu: R_u(P)\times Z\rightarrow X_s,\quad (p,z)\mapsto p\cdot z$$
is finite, surjective, étale at $(e,x)$, and the fiber $\mu^{-1}(x)$ is reduced to the single point $\{(e,x)\}$.
\end{enumerate}
\end{proposition}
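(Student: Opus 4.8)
The plan is to read off $X_s$, $Z$ and the morphism $\mu$ from Knop's local structure theorem \cite[Theorem~1.2]{Kn1}, and then to exploit the separability of the closed orbit $\omega$ to make $\mu$ étale at the base point with reduced one-point fibre there. Applying that theorem to $\omega=Gx\cong G/G_x$ (a complete orbit whose stabilizer $G_x$ is parabolic) produces a $P$-stable affine open $X_s$ meeting $\omega$ exactly in the open $P$-orbit $R_u(P)\cdot x$, together with a $T$-stable closed subvariety $Z\subseteq X_s$ through $x$ which is a slice at $x$, i.e. $T_{X,x}=T_{\omega,x}\oplus T_{Z,x}$, and for which $\mu\colon R_u(P)\times Z\to X_s$, $(p,z)\mapsto p\cdot z$, is finite and surjective. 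This gives the finiteness and surjectivity in~(2). Since $T\subseteq G_x$ fixes $x$ and $Z$ is $T$-stable, both $T_{\omega,x}$ and $T_{Z,x}$ are $T$-submodules, so this splitting is $T$-equivariant and yields the $T$-module isomorphism $T_{Z,x}\cong T_{X,x}/T_{\omega,x}=N_{\omega/X,x}$; as $\mu$ is finite, $\dim Z=\dim X-\dim R_u(P)=\dim X-\dim\omega=\dim T_{Z,x}$, so $Z$ is smooth at $x$. This is assertion~(1).

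To obtain étale-ness at $(e,x)$ I would compute $d\mu_{(e,x)}$. Writing $\mathfrak{r}$ for the Lie algebra of $R_u(P)$ and identifying the source tangent space with $\mathfrak{r}\oplus T_{Z,x}$, one has $d\mu_{(e,x)}(w,u)=d(orb_x)_e(w)+u$. Here separability enters decisively: as $\omega$ is a separable orbit, $orb_x$ is smooth at $e$, so $d(orb_x)_e\colon\lieg\to T_{\omega,x}$ is surjective with kernel $\mathfrak{g}_x$; since $\lieg=\mathfrak{g}_x\oplus\mathfrak{r}$, its restriction $\mathfrak{r}\to T_{\omega,x}$ is an isomorphism. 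Combined with the transverse splitting of~(1), this shows that $d\mu_{(e,x)}$ is an isomorphism, hence $\mu$ is étale at $(e,x)$.

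It remains to pin down the fibre. If $(p,z)\in\mu^{-1}(x)$ then $z=p^{-1}\cdot x\in Z\cap R_u(P)x=Z\cap\omega$. Now $Z\cap\omega$ is $T$-stable, and it is finite: the preimage $\mu^{-1}(R_u(P)x)$ equals $R_u(P)\times(Z\cap\omega)$, which is finite over the $\dim R_u(P)$-dimensional orbit $R_u(P)x$, forcing $\dim(Z\cap\omega)=0$. As $T$ is connected it fixes $Z\cap\omega$ pointwise; but a $T$-fixed point of $R_u(P)x$ is of the form $q\cdot x$ with $q^{-1}Tq\subseteq G_x$, and a short commutator computation (using $R_u(P)\cap G_x=\{e\}$, $C_G(T)=T$ and $R_u(P)\cap T=\{e\}$) forces $q=e$. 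Hence $Z\cap\omega=\{x\}$, so $z=x$ and $p\in R_u(P)\cap G_x=\{e\}$; thus $\mu^{-1}(x)=\{(e,x)\}$ as a set, and since $\mu$ is étale there this fibre is reduced.

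The crux is the étale-ness at $(e,x)$ in positive characteristic. Knop's $\mu$ is a priori only finite and surjective and may be inseparable exactly in the orbit directions $\mathfrak{r}$; what saves the argument is that separability of $\omega$ is precisely the non-degeneracy of $d(orb_x)_e$ on $\mathfrak{r}$, keeping the orbit contribution to $d\mu_{(e,x)}$ an isomorphism onto $T_{\omega,x}$. Without this hypothesis $\mu$ would stay ramified at the base point, and neither the étale property nor the reducedness of the fibre could hold. The only other delicate point is the elementary computation of the $T$-fixed points of $R_u(P)x$, which is what collapses the fibre to a single reduced point.
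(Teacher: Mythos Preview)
Your argument is correct and follows the same route as the paper: obtain $X_s$, $Z$ and the finite surjective $\mu$ from Knop's local structure theorem, and then use separability of $\omega$ to make $d\mu_{(e,x)}$ an isomorphism via $\mathfrak r\xrightarrow{\sim}T_{\omega,x}$. The étale computation and its reliance on separability are exactly as in the paper.

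The packaging differs in two minor respects. First, the paper does not read the transversality $T_{X,x}=T_{\omega,x}\oplus T_{Z,x}$ off Knop's theorem as a black box; it manufactures it. Using Sumihiro and \cite[Theorem~2.10]{Kn1} it gets $X_s$, then embeds $X$ in some $\mathbb P(V)$, chooses a $T$-stable complement $S$ to $T_{\omega,x}$ inside $T_{\mathbb P(V),x}$ (with $S\cap T_{X,x}$ a complement to $T_{\omega,x}$ in $T_{X,x}$), takes the linear subspace $S'\subset\mathbb P(V)$ through $x$ with $T_{S',x}=S$, and only then applies \cite[Theorem~1.2]{Kn1} to an irreducible component $Z$ of $X_s\cap S'$. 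Smoothness of $Z$ at $x$ and the $T$-equivariant identification $T_{Z,x}\cong N_{\omega/X,x}$ then come from the built-in transversality of $S'$, not from Knop. Your proof is fine provided Knop's statement, as you are citing it, really delivers a $Z$ with $T_{Z,x}$ complementary to $T_{\omega,x}$; otherwise the dimension count for smoothness is circular, and you would need the paper's explicit linear-slice construction.

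Second, and going the other way, the paper simply quotes $\mu^{-1}(x)=\{(e,x)\}$ from \cite[Theorem~1.2]{Kn1}, whereas you supply an independent argument via $T$-fixed points in $R_u(P)x$. Your argument is correct (and pleasant), but it is not needed once Knop is invoked.
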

\begin{proof}
As the smooth $G$-variety $X$ has a unique closed orbit, it is quasi-projective by a famous result of Sumihiro, see \cite{Sum}. 
We fix a very ample line bundle $\mathcal{L}$ on $X$. We fix a $G$-linearization of this line bundle.
By \cite[Theorem $2.10$]{Kn1}, there exists an integer $N$ and a global section $s$ of $\mathcal{L}^N$ 
such that the nonzero locus $X_s$ of $s$ 
is an affine open subvariety containing the point $x$ and the stabilizer of the line spanned by $s$ in the vector space $H^0(X,\mathcal{L}^N)$ is
$P$. The open subvariety $X_s$ is therefore affine, contains the point $x$ and is stable under the action of the parabolic subgroup $P$.
Using the line bundle $\mathcal{L}^N$, we embed $X$ into a projective space $\mathbb{P}(V)$ on which $G$ acts linearly.
We choose a $T$-stable complement $S$ to $T_{\omega,x}$ in the tangent space $T_{\mathbb{P}(V),x}$, such that $S$ is the direct sum 
of a $T$-stable complement of $T_{\omega,x}$ in $T_{X,x}$ and a $T$-stable complement of $T_{X,x}$ in $T_{\mathbb{P}(V),x}$.
This is possible because $T$ is a linearly reductive group.

\medskip

We consider now the linear subspace $S'$ of 
$\mathbb{P}(V)$ containing $x$ and whose tangent space at $x$ is $S$. It is a $T$-stable subvariety of $\mathbb{P}(V)$.
By \cite[Theorem $1.2$]{Kn1}, there is an irreducible component $Z$ of $X_s\cap S'$ containing $x$ and such that the morphisms
$$\mu: R_u(P)\times Z\rightarrow X_s,\quad (p,z)\mapsto p\cdot z$$
$$\nu: Z\rightarrow X_s/R_u(P),\quad z\mapsto zR_u(P)$$
are finite and surjective. Moreover, the fiber $\mu^{-1}(x)$ is reduced to the single point $(e,x)$. 
We observe now that $S'$ intersects $X_s$ transversally at
$x$. This implies that the subvariety $Z$ is smooth at $x$. It is also $T$-stable, as an irreducible component of $X_s\cap S'$. By definition, the parabolic subgroup $P$ contains the Borel subgroup $B$, therefore the orbit $Px=R_u(P)x$ 
is open in $\omega$. Moreover, we have the direct sum decomposition $\lieg=\lieg_x\oplus \mathfrak{p}_u,$
where $\mathfrak{p}_u$ is the Lie algebra of the unipotent radical $R_u(P)$ of $P$.
The morphism
$$d_e orb_x:\lieg\rightarrow T_{\omega,x}$$
is surjective and identically zero on $\lieg_x$. This proves that the restriction of this morphism to $\mathfrak{p}_u$
is an isomorphism. 
The morphism 
$$\mu:R_u(P)\times Z\rightarrow X_s,\quad (p,z)\mapsto p\cdot z$$
is therefore étale at $(e,x)$. Indeed, its differential at this point is:
$$\mathfrak{p}_u\times T_{Z,x}\rightarrow T_{X,x},\quad (h,k)\mapsto d_e orb_x(h)+k.$$ 
We also see that the spaces 
$T_{Z,x}$ and $N_{\omega/X,x}$ endowed with their action of the torus $T$ are isomorphic, completing the proof of the proposition.
\end{proof}

We now suppose further that $X$ is an embedding of the homogeneous space $\Omega$. With this additional assumption we have:
\begin{thm}\label{structurelocale}
The following three properties are equivalent:
\begin{enumerate}[(1)]
\item The homogeneous space $\Omega$ is spherical and the embedding $X$ has no color.
\item The torus $T$ possesses an open orbit in the normal space $N_{\omega/X,x}$. Moreoever, the complement $D$ of $\Omega$ in $X$ 
is a strict normal crossing divisor and the partial intersections of the irreducible components of $D$ are the closure of the $G$-orbits in $X$.
\item The set $X_0=\{y\in X,\quad x\in \overline{B y}\}$ is an affine open subvariety of $X$ which is stable by $P$. 
Moreover, there exists a closed subvariety $Z$ of $X_0$ which is smooth, stable by $L$, on which the derived subgroup $[L,L]$ acts trivially
and containing an open orbit of the torus $L/[L,L]$, such that the morphism:
$$R_u(P)\times Z\rightarrow X_0,\quad (p,z)\mapsto p\cdot z$$
is an isomorphism. Finally, each orbit of $G$ in $X$ intersects $Z$ along a unique orbit of $T$.
\end{enumerate}
\end{thm}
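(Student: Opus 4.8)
The plan is to prove the cyclic chain of implications $(1)\Rightarrow(3)\Rightarrow(2)\Rightarrow(1)$, using Proposition \ref{key1} as the main engine and the theory of spherical embeddings (colored fans) to control the combinatorics.

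\textbf{The implication $(1)\Rightarrow(3)$.} Assume $\Omega$ is spherical and $X$ has no color. First I would observe that $X_0=\{y\in X:\ x\in\overline{By}\}$ is, by the standard Bruhat-type argument, exactly the non-vanishing locus of a $B$-semiinvariant section of a suitable power of an ample line bundle, hence affine, open, and stabilized by $P=BG_x$-type parabolic; concretely this is the set $X_s$ produced in Proposition \ref{key1} once the section is chosen $B$-eigen. The heart of the argument is then to upgrade the finite surjective étale-at-$(e,x)$ morphism $\mu:R_u(P)\times Z\to X_0$ of Proposition \ref{key1} to a genuine \emph{isomorphism}. Since $\Omega$ is spherical, $B$ (hence $T$ modulo $R_u(P)$) has a dense orbit, so $\dim Z=\dim N_{\omega/X,x}$ equals the rank data forcing $T$ to act on $Z$ with a dense orbit; the no-color hypothesis guarantees that no $B$-stable prime divisor of $\Omega$ maps dominantly, which is precisely what kills the ramification of $\mu$ and of $\nu$. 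I would then argue that $[L,L]\subset R_u(P)\cdot G_x$ acts trivially on $Z$ and that $Z$ is an affine toric variety under $T/([L,L]\cap T)=L/[L,L]$, so that $\mu$ becomes a birational finite morphism between smooth (or normal) varieties, hence an isomorphism by Zariski's main theorem. The last sentence of (3), that each $G$-orbit meets $Z$ in a single $T$-orbit, follows because the $G$-orbits correspond bijectively to the faces of the fan, which are read off on the toric slice $Z$.

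\textbf{The implications $(3)\Rightarrow(2)$ and $(2)\Rightarrow(1)$.} Given (3), the isomorphism $R_u(P)\times Z\cong X_0$ transports the $T$-toric structure on $Z$ to a local product structure on $X_0$; since $Z$ is toric its boundary is a strict normal crossing divisor whose partial intersections are the $T$-orbit closures, and multiplying by the affine space $R_u(P)$ and translating by $G$ shows the same for $D$ in all of $X$, while $T$ acts on $N_{\omega/X,x}\cong T_{Z,x}$ with a dense orbit because $Z$ is toric of the right dimension. This yields (2). For $(2)\Rightarrow(1)$, the open $T$-orbit in $N_{\omega/X,x}$ together with Proposition \ref{key1} forces $B$ to have a dense orbit in $X_0$, hence in $\Omega$, so $\Omega$ is spherical; and the condition that the partial intersections of the components of $D$ are exactly the $G$-orbit closures is equivalent, in the colored-fan dictionary, to the statement that every colored cone of the fan of $X$ has empty set of colors, i.e. $X$ has no color.

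\textbf{Main obstacle.} The delicate point is the promotion of the finite surjective morphism $\mu$ to an isomorphism in $(1)\Rightarrow(3)$: in positive characteristic one cannot simply invoke characteristic-zero local structure, and Knop's theorem only gives finiteness, not birationality. I expect the crux to be showing that $\mu$ is birational (degree one), for which the no-color hypothesis must be translated into the assertion that the finite map $\nu:Z\to X_0/R_u(P)$ is unramified in codimension one and generically one-to-one; here the separability of the closed orbit $\omega$ and the spherical (hence multiplicity-free) structure of $\Omega$ are what make the argument go through, and I would phrase the final step via Zariski's main theorem applied to the normal variety $X_0$.
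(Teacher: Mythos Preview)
Your overall architecture is reasonable, but there are two genuine gaps where the mechanism you invoke is not the one that actually works.

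\textbf{In $(1)\Rightarrow(3)$: the role of ``no color''.} You write that the no-color hypothesis ``kills the ramification of $\mu$ and of $\nu$''. That is not how the argument runs. In the paper, the no-color hypothesis is used, via \cite[Lemma 6.5]{Kn} and \cite[Theorem 2.8]{Kn1}, to show that the derived group $[L,L]$ acts trivially on $X_s/R_u(P)$; together with sphericality this makes $Z$ a smooth affine toric variety for a quotient of $T$, with fixed point $x$. The upgrading of $\mu$ to an isomorphism is then purely a closed-orbit argument and has nothing to do with colors or with ramification in codimension one: the locus where $\mu$ is not \'etale is closed and $R_u(P)\rtimes T$-stable in $R_u(P)\times Z$; since the unique closed $R_u(P)\rtimes T$-orbit is $R_u(P)\times\{x\}$ and $\mu$ is \'etale at $(e,x)$ by Proposition \ref{key1}, this locus is empty. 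Hence $\mu$ is finite \'etale, and since $\mu^{-1}(x)=\{(e,x)\}$ it has degree $1$, so it is an isomorphism. Your Zariski's-Main-Theorem conclusion would also work, but only after you have degree $1$, and that comes from the single-fiber statement in Proposition \ref{key1}, not from an unramifiedness-in-codimension-one argument.

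\textbf{In $(2)\Rightarrow(1)$: the ``colored-fan dictionary'' step.} Your assertion that ``the partial intersections of the components of $D$ are exactly the $G$-orbit closures'' is equivalent, in the colored-fan language, to ``$X$ has no color'' is not a standard dictionary entry and you do not justify it. The components of $D$ are automatically $G$-stable (since $G$ is connected), so condition (2) says nothing directly about the $B$-stable but non-$G$-stable divisors whose closures might contain $\omega$; ruling those out is exactly what ``no color'' means, and it does not follow for free. The paper avoids this entirely by proving $(2)\Rightarrow(3)$ instead: one runs the same closed-orbit argument as above to show $\mu$ is an isomorphism (here $Z$ is toric because $T$ has an open orbit in $T_{Z,x}\cong N_{\omega/X,x}$ by hypothesis), and then matches $G$-orbits with $T$-orbits by a counting argument---there are $2^r$ of each, where $r=\op{codim}\omega$, because $D$ has $r$ components with all partial intersections nonempty and $Z$ is a smooth affine toric variety of dimension $r$ with a fixed point. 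Then $(3)\Rightarrow(1)$ gives the no-color conclusion directly: any $B$-stable prime divisor containing $\omega$ meets $X_0\cong R_u(P)\times Z$ in $R_u(P)\times(\text{$T$-orbit closure})$, hence is a $G$-orbit closure.
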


\begin{proof}
$(3)\Rightarrow (1)$ As $T$ possesses an open orbit in $Z$, we see that the Borel subgroup $B$ has an open orbit in $X$, and the homogeneous 
 space $\Omega$ is spherical. Moreover, let $D$ be a $B$-stable prime divisor on $X$ containing $\omega$. Using the isomorphism 
 in $(3)$ we can write
 $$D\cap X_0=R_u(P)\times (D\cap Z).$$
As $D\cap Z$ is a closed irreducible $T$-stable subvariety of $Z$, it is the closure of a $T$-orbit in $Z$. As the $T$-orbits in $Z$ correponds bijectively 
to the $G$-orbits in $X$, we see that $D$ is the closure of a $G$-orbit in $X$ and is therefore stable under the action of $G$. This proves that the embedding
$X$ of $\Omega$ has no color.

\medskip

$(3)\Rightarrow (2)$ The isomorphism in $(3)$ proves that the spaces $T_{Z,x}$ and $N_{\omega/X,x}$ endowed with their actions of the torus $T$ 
are isomorphic. As $T$ possesses an open dense orbit in the first one, it also has an open dense orbit in the latter.
As $Z$ is smooth toric variety, we see that the complement of the open orbit of $T$ in $Z$ is 
a strict normal crossing divisor whose associated strata are the $T$-orbits in $Z$. 
Using the isomorphism given by $(3)$, we see that the complement of the open orbit of the parabolic subgroup $P$ in $X_0$ is a strict normal crossing divisor 
whose associated strata are the products $R_u(P)\times \Omega'$, where $\Omega'$ runs over the set of $T$-orbits in $Z$.
To complete the proof that property $(2)$ is satisfied, we translate the open subvariety $X_0$ by elements of $G$ and we use the fact that each $G$-orbit in $X$ 
intersects $Z$ along a unique $T$-orbit.

\medskip

$(1)\Rightarrow (3)$
We use the notations of Proposition \ref{key1}. By \cite[Lemma $6.5$]{Kn} the fact that the embedding $X$ has no color implies that the parabolic subgroup $P$ 
is the stabilizer of the open $B$-orbit $\Omega$ in $X$. Using this fact and \cite[Theorem $2.8$]{Kn1} we obtain that the derived subgroup of $P$, and therefore 
the derived group of $L$, acts trivially on $X_s/R_u(P)$. Moreover, as the homogeneous space $\Omega$ is spherical, the Levi subgroup $L$ has an open orbit in 
$X_s/R_u(P)$. The torus $T$ has therefore an open orbit in $X_s/R_u(P)$, as the derived group of $L$ acts trivially. Using the finite surjective morphism 
$\nu$ appearing in the proof of Proposition \ref{key1}, we see that $T$ has an open orbit in $Z$.
$Z$ is therefore a smooth affine toric variety with a fixed point under the action of a quotient of $T$. Moreover, as the subvariety $Z$ is left stable under the action of $T$ and the derived group 
$[L,L]$ acts trivially on $Z$, we see that the Levi subgroup $L$ leaves the subvariety $Z$ invariant.

\medskip

We observe now that the locus of points of $R_u(P)\times Z$ where $\mu$ is not étale is closed and stable under the actions of 
$R_u(P)$ and $T$. The unique closed orbit of 
$R_u(P)\rtimes T$ in $R_u(P)\times Z$ is $R_u(P)x$ and $\mu$ is étale at $(e,x)$, therefore we obtain that $\mu$ is an étale morphism.
As the morphism $\mu$ is also finite of degree $1$ (the fiber of $\{x\}$ being reduced to a single point), it is an isomorphism.

\medskip

We prove now that each $G$-orbit in $X$ intersects $Z$ along a unique $T$-orbit.
First, we observe that, as $\omega$ is the unique closed orbit of $G$ in $X$, the open subvariety $X_s$ intersects every $G$-orbit.
We shall prove that the closures of the $G$-orbits in $X$ corresponds bijectively to the closures of the $T$-orbits in $Z$.
Let $X'$ be the closure of a $G$-orbit in $X$. As $X'$ is the closure of $X'\cap X_s$, it is also equal, using the isomorphism $\mu$, 
to the closure of $R_u(P)(X'\cap Z)$. The closed subvariety  $X'\cap Z$ of $Z$ is therefore a closed irreducible $T$-stable
subvariety. We can conclude that it is the closure of a $T$-orbit in $Z$. 
Conversely let $Z'$ be the closure of a $T$-orbit in $Z$. As $Z$ is a smooth toric variety, we can write   
$$Z'=D'_1\cap
D'_2\cap...\cap D'_r,$$ where the $D'_i$s are $T$-stable prime divisors on $Z$. We observe that the primes divisors
$$\overline{R_u(P)D'_1},...,\overline{R_u(P)D'_r}$$ on $X$ are stable under the action of $P$. Indeed, the orbits of $P$ in $X_s$ are exactly the orbits 
of
$R_u(P)\rtimes T$ in $X_s$. As $X$ has no color, the fact that these divisors contain the closed orbit $\omega$ proves that they are 
stable under the action of $G$. Their intersection $\overline{R_u(P)Z}$ is also $G$-stable. As it is irreducible, we can conclude that it is the closure of a $G$-orbit in $X$.

\medskip

In order to complete the proof that $(1)\Rightarrow (3)$, it remains to show that 
$$X_s=\{y\in X,\quad x\in \overline{B y}\}.$$
Let $y$ be a point on $X$ such that $x$ belongs to $\overline{B y}$. The intersection $X_s\cap
\overline{B y}$ is a non empty open subset of $\overline{B y}$ which is stable under the action of $B$.
Therefore it contains $y$, that is, $y$ belongs to $X_s$.
Now let $y$ be a point on $X_s$. The closed subvariety $\overline{B y}$ contains a closed $B$-orbit in $X_s$. As the unique closed orbit 
of $B$ in $X_s$ is the orbit of $x$, we see that $x$ belongs to $\overline{B y}$, completing the argument.

\medskip

($2\Rightarrow 3$) We use the notations introduced in Proposition \ref{key1}.  
By assumption, the torus $T$ possesses an open orbit in the normal space $N_{\omega/X,x}$. Moreover, by Proposition \ref{key1}, the spaces 
 $T_{Z,x}$ and $N_{\omega/X,x}$ endowed with their actions of $T$ are isomorphic. 
 Therefore, the torus $T$ possesses an open dense orbit in $T_{Z,x}$. It is then an easy exercise left to the reader to prove that 
 the variety $Z$ is a smooth toric variety for a quotient of $T$.
The same arguments as above prove that the morphism $\mu$ is an isomorphism.

\medskip
We prove now that each $G$-orbit in $X$ intersects $Z$ along a unique orbit of $T$. Let $D$ be the complement of $\Omega$ in $X$. By assumption, 
it is a strict normal crossing divisor whose associated strata are the $G$-orbits in $X$. 
We denote by $D_1,\ldots,D_r$ the irreducible component of $D$. As there is a unique closed orbit of $G$ on $X$ each partial intersection 
$\bigcap_{i\in I}D_i$ is non empty and irreducible or, in other words, it is a stratum of $D$. The integer $r$ is the codimension of 
the closed orbit $\omega$ in $X$, and there are exactly $2^r$ $G$-orbits in $X$. As the variety $Z$ is a smooth affine toric variety of dimension $r$ with a fixed point, 
we see that there are exactly $2^r$ orbits of $T$ on $Z$. As each orbit of $G$ in $X$ intersect $Z$ we see that the intersection of a $G$-orbit 
with $Z$ is a single $T$-orbit.

\medskip

Finally, we prove that the open subvariety $X_s$ is equal to $X_0$ by the same argument as in the proof of $(1)\Rightarrow (3)$, completing the proof of the theorem. 
\end{proof}

\subsection{Regular, strongly regular and log homogeneous compactifications}\label{comparison}
In this section we use the following notation. Let $X$ be a $G$-variety with a finite number of orbits (for example, a spherical variety).
Let $\omega$ be an orbit of $G$ in $X$. We denote by 
$$X_{\omega,G}=\{y\in X, \omega\subseteq \overline{Gy}\}.$$
It is an open $G$-stable subvariety of $X$ in which $\omega$ is the unique closed orbit.

\begin{thm}\label{sscsr}
Let $X$ be a smooth compactification of the homogeneous space $\Omega$. The following two properties are equivalent:
\begin{enumerate}[(1)]
\item $X$ is regular.
\item The homogeneous space $\Omega$ is spherical, the embedding $X$ has no color and the orbits of $G$ in $X$ are separable.
\end{enumerate}
\end{thm}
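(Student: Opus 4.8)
The plan is to deduce both implications from the local structure theorem \ref{structurelocale} by localizing around each closed orbit. Both conditions in (1) and (2) can be tested on the open $G$-stable subvarieties $X_{\omega,G}$, where $\omega$ runs over the closed orbits of $G$ in $X$: each $X_{\omega,G}$ is a smooth $G$-variety whose unique closed orbit is $\omega$, and $\omega$ is complete (being closed in the complete variety $X$) and, under the separability hypothesis shared by (1) and (2), separable; thus Proposition \ref{key1} and Theorem \ref{structurelocale} apply to each $X_{\omega,G}$. Since the $X_{\omega,G}$ form an open cover of $X$ and the irreducible components of $D$ restrict to these opens compatibly, it suffices to establish the equivalence on each piece and glue. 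Note also that ``no color'' is inherited by each $X_{\omega,G}$ and, conversely, can be detected on the cover: a $B$-stable prime divisor of $X$ is $G$-stable if and only if each of its nonempty restrictions to the $X_{\omega,G}$ is.

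For $(2)\Rightarrow(1)$, separability of orbits is assumed, so the content is the partial-intersection and normal-space conditions of Definition \ref{defreg}. Sphericity and absence of color pass to each $X_{\omega,G}$, so the implications $(1)\Rightarrow(2)$ and $(1)\Rightarrow(3)$ of Theorem \ref{structurelocale} give, on each piece, that $D$ is strict normal crossing with partial intersections equal to the orbit closures, together with an isomorphism $X_0\cong R_u(P)\times Z$ with $Z$ a smooth affine toric variety for a quotient of $T$. The partial-intersection condition of Definition \ref{defreg} follows by gluing. For the normal-space condition I would take an arbitrary point and, using that $X_0$ meets every $G$-orbit and that the condition is $G$-equivariant, reduce to a point $z\in Z$. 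A dimension count based on the fact that each $G$-orbit meets $Z$ in a single $T$-orbit shows that $Pz=R_u(P)\cdot Tz$ is open in $Gz$; the isomorphism $X_0\cong R_u(P)\times Z$ then identifies $T_{X,z}$ with $\mathfrak{p}_u\oplus T_{Z,z}$ and $N_{Gz/X,z}$ with $N_{Tz/Z,z}$ as a module over $T_z=T\cap G_z$. Since $Z$ is a smooth toric variety, $T_z$, and a fortiori $G_z$, has an open orbit in this space, which is Definition \ref{defreg}.

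For $(1)\Rightarrow(2)$, separability is again assumed, so the task is to produce sphericity and absence of color. I would fix a closed orbit $\omega$ and a point $x\in\omega$, and apply Proposition \ref{key1} to $X_{\omega,G}$ to obtain a $T$-stable subvariety $Z$, smooth at $x$, with $T_{Z,x}\cong N_{\omega/X,x}$ as $T$-modules and $\dim Z=\operatorname{codim}_X\omega=:c$. The crucial step is to promote the regularity conditions into hypothesis (2) of Theorem \ref{structurelocale}. The finite surjective morphisms of Proposition \ref{key1} match the $G$-orbits of $X_{\omega,G}$ with the $T$-orbits of $Z$, so the regularity requirement that the partial intersections of the components of $D$ be precisely the orbit closures forces each such partial intersection to be irreducible of the expected codimension; this already encodes that exactly $c$ components pass through $\omega$ and that they meet transversally, i.e. that $D$ is strict normal crossing, and makes the orbit poset of $Z$ Boolean of rank $c$. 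A smooth affine $T$-variety of dimension $c$ with a fixed point and a Boolean orbit poset of rank $c$ is forced to be toric with $T$ acting through $c$ independent weights, so $T$ has an open orbit in $T_{Z,x}\cong N_{\omega/X,x}$. Theorem \ref{structurelocale}, $(2)\Rightarrow(1)$, then shows $\Omega$ is spherical and $X_{\omega,G}$ has no color; globalizing over all closed orbits gives (2).

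I expect the main obstacle to be precisely this last bridge in $(1)\Rightarrow(2)$: extracting the toric structure of $Z$—equivalently, the open $T$-orbit in $N_{\omega/X,x}$—from the orbit-theoretic regularity conditions. The point is genuinely delicate, since a parabolic isotropy group can have a dense orbit on a representation without its maximal torus doing so (for instance $\operatorname{GL}_2$ on binary quadratic forms); it is therefore essential to exploit the partial-intersection condition, rather than the normal-space condition alone, to pin down the coordinate action. The remaining work—gluing the local strict-normal-crossing and partial-intersection statements, and the dimension count showing $Pz$ is open in $Gz$—is routine, and I would only sketch it.
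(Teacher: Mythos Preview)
Your treatment of $(2)\Rightarrow(1)$ is essentially the paper's, with more detail on the normal-space verification; the paper also invokes the combinatorics of colorless spherical embeddings to get global irreducibility of the partial intersections, which is cleaner than ``gluing.''

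The real divergence is in $(1)\Rightarrow(2)$, and here your route is both harder and, as written, circular. You propose to extract the open $T$-orbit in $N_{\omega/X,x}$ by first matching $G$-orbits in $X_{\omega,G}$ with $T$-orbits in $Z$ via the finite surjective morphism $\mu$ of Proposition~\ref{key1}, and then arguing that a smooth affine $T$-variety with fixed point and Boolean orbit poset must be toric. But the orbit matching is precisely what is \emph{proved} in Theorem~\ref{structurelocale} \emph{after} one knows $\mu$ is an isomorphism, and $\mu$ is only shown to be an isomorphism once $T$ has a dense orbit in $Z$. With $\mu$ merely finite and surjective, a single $G$-orbit could a priori pull back to several $T$-orbits in $Z$, so the Boolean poset is not available, and your ``forced to be toric'' step has nothing to stand on.

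The paper bypasses $Z$ altogether with a one-line observation you are missing. Since $D$ is strict normal crossing and (by the partial-intersection condition with $|I|=1$) each component $D_i$ is $G$-stable, the normal space $N_{\omega/X,x}$ decomposes canonically as a direct sum of the normal lines to the $D_i$ through $x$, and each line is $G_x$-stable. The isotropy $G_x$ is a parabolic, hence connected, so it acts on each line by a character; the representation therefore factors through the torus $L/[L,L]$. Now the dense-orbit hypothesis in regularity forces this torus, and hence $T$, to have an open orbit in $N_{\omega/X,x}$. This is exactly hypothesis $(2)$ of Theorem~\ref{structurelocale}, and the implication follows. Your worry about ``$\operatorname{GL}_2$ on binary quadratic forms'' is thus resolved not by the orbit count but by the line decomposition coming from the strict normal crossing structure: the representation on the normal space is already diagonalized.
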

\begin{proof}
Suppose that $X$ is regular. Let $D$ be the complement of $\Omega$ in $X$. It is a strict normal crossing divisor.
Let $\omega$ be a closed, and therefore complete and separable, orbit of $G$ in $X$. We use the notations introduced at the beginning of Section 
\ref{structurelocale} with $X_{\omega,G}$ in place of $X$. The normal space 
$N_{\omega/X,x}$ is the normal space to a stratum of the divisor $D$ and therefore possesses a natural direct sum decomposition into a sum of lines, each of them 
being stable under the action of $G_x$ (which is connected, as it is a parabolic subgroup of $G$). Therefore the representation of $G_x$ in $N_{\omega/X,x}$ factors through the action of 
a torus.This proves that the derived group of $L$ acts trivially in this space, proving that the torus $T$ has a dense orbit  
in $N_{\omega/X,x}$. By Theorem \ref{structurelocale} (applied to $X_{\omega,G}$) the homogeneous space $\Omega$ is spherical and the embedding $X_{\omega,G}$ has no color.
As this is true for each closed orbit $\omega$ of $G$ in $X$, we see that the embedding $X$ has no color.

\medskip

We assume now that $\Omega$ is spherical, $X$ has no color and that each orbit of $G$ in $X$ is separable.
By applying Theorem \ref{structurelocale} to each open subvariety $X_{\omega,G}$, where $\omega$ runs over the set of closed orbits of $X$,
we see that the complement $D$ of $\Omega$ in $X$ 
is a strict normal crossing divisor and that, for each point $x$ in $X$, the isotropy group $G_x$ has an open orbit in the normal space $N_{Gx/X,x}$.
Moreover, by assumption, the $G$-orbits in $X$ are separable. 
To complete the proof of the theorem, it remains to show that the partial intersections of the irreducible components of $D$ are irreducible.
But this is true on every colorless embedding of a spherical homogeneous space, due to the combinatorial description of these embeddings, see \cite[Section 3]{Kn}.
\end{proof}

\begin{thm}\label{lhfr}
Let $X$ be a smooth compactification of $\Omega$. The following two properties are equivalent:
\begin{enumerate}[(1)]
\item $X$ is a log homogeneous compactification.
\item $X$ is strongly regular.
\end{enumerate}
\end{thm}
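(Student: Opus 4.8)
The plan is to prove both implications by reducing the global surjectivity of the bundle map $X\times\lieg\to\txd$ to a pointwise statement on fibres, and then matching that statement against the defining conditions of Definition \ref{defreg}. First I would record two structural facts. Since $G$ is connected and permutes the finitely many irreducible components $D_1,\dots,D_r$ of $D$, each $D_i$ is $G$-stable; hence for every $x$ the orbit $Gx$ is contained in the \emph{stratum} $S_x$, the irreducible component through $x$ of the intersection of those $D_i$ passing through $x$. Next, because the bundle map is $G$-equivariant and the fibres of $X\times\lieg$ and of $\txd$ over an orbit $Gx\cong G/G_x$ are $G$-homogeneous, its surjectivity over $Gx$ is equivalent to the surjectivity of the single fibre map $a_x\colon\lieg\to\txd(x)$ of $G_x$-modules. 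As $X$ has finitely many orbits, log homogeneity is exactly the condition that $a_x$ be surjective for one point $x$ in each orbit.

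The key local computation comes next. Writing $k$ for the number of components through $x$, the strict normal crossing condition gives an exact sequence of $G_x$-modules
\[
0\to K_x\to\txd(x)\xrightarrow{\ \rho\ } T_{S_x,x}\to 0,
\]
where $\rho$ is the anchor map, $T_{S_x,x}$ is the tangent space to the stratum, and $K_x\cong\bigoplus_i N_{D_i/X,x}\cong N_{S_x/X,x}$ is the sum of the logarithmic residue lines. Since $\rho\circ a_x=d_e orb_x$ has image in $T_{Gx,x}\subseteq T_{S_x,x}$, a rank count shows that $a_x$ is surjective if and only if (i) $d_e orb_x$ surjects onto $T_{S_x,x}$, that is, the orbit $Gx$ is separable and $\overline{Gx}=S_x$; and (ii) the residue map $\bar a_x\colon\mathfrak g_x\to K_x$ obtained by restricting $a_x$ to $\ker d_e orb_x=\mathfrak g_x$ is surjective. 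The step I expect to be most delicate is the identification of $\bar a_x$: one checks that, under the splitting $K_x\cong\bigoplus_i N_{D_i/X,x}$, the map $\bar a_x$ sends $\xi$ to $(d\chi_i(\xi))_i$, where $\chi_i$ is the character by which $G_x$ acts on the normal line $N_{D_i/X,x}$. Consequently $\bar a_x$ is surjective precisely when the differentials $d\chi_i$ are linearly independent, which is exactly the condition that $G_x$ have an open and separable orbit in $N_{Gx/X,x}=\bigoplus_i N_{D_i/X,x}$.

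Granting this, the implication strongly regular $\Rightarrow$ log homogeneous is immediate: strong regularity supplies (i) (orbits separable, strata equal to orbit closures) and, through the identification above, (ii) at every point, so each $a_x$ is surjective. For the converse I would argue that log homogeneity yields (i) and (ii) everywhere, whence the strata are exactly the orbit closures and all orbits are separable; applying (ii) at a point $x$ of a closed orbit $\omega$ shows that $G_x$ acts on $N_{\omega/X,x}$ through a torus, so that $T$ has an open orbit there, and Proposition \ref{key1} (via $T_{Z,x}\cong N_{\omega/X,x}$) then forces $\Omega$ to be spherical and, through Theorem \ref{structurelocale}, the embedding to be colorless.

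The main obstacle in this direction is upgrading ``the strata are orbit closures'' to the requirement of Definition \ref{defreg} that every partial intersection $\bigcap_{i\in I}D_i$ be itself irreducible, rather than merely a union of strata. Here I would invoke Theorem \ref{sscsr}: once $\Omega$ is known to be spherical, $X$ colorless and the orbits separable, $X$ is regular, so the partial intersections are the irreducible orbit closures; equivalently one appeals to the combinatorial description of colorless spherical embeddings in \cite{Kn}. Combining this with condition (ii), which furnishes exactly the separability of the open orbit of $G_x$ in $N_{Gx/X,x}$, shows that $X$ is strongly regular, completing the equivalence.
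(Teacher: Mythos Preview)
Your argument is correct and is essentially the paper's own proof made explicit: the paper defers the pointwise analysis of the map $a_x\colon\lieg\to\txd(x)$ to \cite[Proposition~2.1.2]{Br}, which is precisely the exact-sequence and residue computation you carry out, and then handles the irreducibility of the partial intersections $\bigcap_{i\in I}D_i$ by the same route you propose (reduce to sphericity plus colorlessness, then invoke the combinatorics of colorless spherical embeddings in \cite{Kn}, exactly as in the proof of Theorem~\ref{sscsr}). One small imprecision worth flagging: as a $G_x$-module the kernel $K_x$ is trivial (each logarithmic residue line is canonically $k$, since $z_i\partial_{z_i}$ is $G_x$-invariant), not $\bigoplus_i N_{D_i/X,x}$; this does not affect your identification of $\bar a_x$ with $(d\chi_i)_i$ nor the equivalence with the open–separable–orbit condition, but the two $k$-dimensional $G_x$-modules you wrote as isomorphic are not.
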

\begin{proof}
We suppose first that the compactification $X$ is log homogeneous. We denote by $D$ the complement of $\Omega$ in $X$.
It is a strict normal crossing divisor. Following the argument given in  \cite[Proposition $2.1.2$]{Br} we prove that each stratum of the 
strict normal crossing divisor $D$ is a single orbit under the action of $G$ which is separable and that for each point $x\in X$, the isotropy
group $G_x$ possesses an open and separable orbit in the normal space $N_{Gx/X,x}$.
In order to conclude, it remains to prove that the partial intersection of the irreducible components of $D$ are irreducible.
But the same argument as in the proof of Theorem \ref{sscsr} prove that $\Omega$ is spherical and $X$ has no color, which is sufficient to complete the proof.

\medskip

Conversely, if $X$ is supposed to be strongly regular, the proof of \cite[Proposition $2.1.2$]{Br} 
adapts without change and shows that $X$ is a log homogeneous compactification of $\Omega$.

\end{proof}

\begin{propdef}\label{conditionb}
If the homogeneous space $\Omega$ possesses a log homogeneous compactification, then it satisfies the following equivalent conditions:
\begin{enumerate}[(1)]
\item  The homogeneous space $\Omega$ is spherical and there exists a Borel subgroup of $G$ whose open orbit in $\Omega$ is separable. 
\item  The homogeneous space $\Omega$ is spherical and the open orbit of each Borel subgroup of $G$ in $\Omega$ is separable.  
\item  The homogeneous space $\Omega$ is separable under the action of $G$, and there exists a point $x$ in $X$ and a Borel subgroup $B$ of $G$ such that  
: $\mathfrak{b}+\lieg_x=\lieg.$
 \end{enumerate}
A homogeneous space satisfying one of these properties is said to be $\textbf{separably}$ $\textbf{spherical}$.  
\end{propdef}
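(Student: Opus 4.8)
The plan is to treat the equivalence of the three conditions as linear algebra at a single point of $\Omega$, and to deduce them from the existence of a log homogeneous compactification by chaining Theorems \ref{lhfr} and \ref{sscsr} with the local structure Theorem \ref{structurelocale}.

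For the equivalences, fix $x\in\Omega$ and write $\rho=d_eorb_x\colon\lieg\rw T_{\Omega,x}$; its kernel is $\lieg_x$, and since $\Omega=Gx$ the map $\rho$ is surjective exactly when $G_x$ is reduced, i.e. when the $G$-action is separable. The key observation I would record is that, for a Borel subgroup $B$, one has $\rho(\mathfrak{b})=T_{\Omega,x}$ if and only if the orbit $Bx$ is open in $\Omega$ and the orbit map $B\rw Bx$ is separable (one direction is immediate; conversely $\rho(\mathfrak{b})=T_{\Omega,x}$ forces $\dim Bx\geq\dim\Omega$, so $Bx$ is open and $T_{Bx,x}=T_{\Omega,x}=\rho(\mathfrak{b})$, giving separability). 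In particular $\rho(\mathfrak{b})=T_{\Omega,x}$ already forces $\rho$ to be surjective, hence $G$ to act separably with $\ker\rho=\lieg_x$, and then $\rho(\mathfrak{b})=T_{\Omega,x}$ is equivalent to $\mathfrak{b}+\lieg_x=\lieg$. This identifies (1), read at a point of the open $B$-orbit, with (3). Finally (1)$\Leftrightarrow$(2) follows from the conjugacy of Borel subgroups: for $g\in G$ the translation $y\mapsto gy$ is a $G$-automorphism of $\Omega$ intertwining the $B$-action with the $gBg^{-1}$-action, so it carries the open $B$-orbit to the open $gBg^{-1}$-orbit and preserves separability; as $g$ varies, $gBg^{-1}$ runs over all Borel subgroups.

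For the implication, suppose $\Omega$ admits a log homogeneous compactification $X$. By Theorem \ref{lhfr}, $X$ is strongly regular, hence regular, so Theorem \ref{sscsr} gives that $\Omega$ is spherical, $X$ has no color, and all $G$-orbits are separable; in particular the $G$-action on $\Omega$ is separable, the first half of (3). It remains to produce one Borel subgroup with separable open orbit. I would fix a closed orbit $\omega$ (separable, since $X$ is regular), replace $X$ by $X_{\omega,G}$, and apply Theorem \ref{structurelocale} to get an isomorphism $R_u(P)\times Z\xrightarrow{\sim}X_0$ with $Z$ a smooth affine toric variety on which $L$ acts through $L/[L,L]$. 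Under it the open $B$-orbit corresponds to $R_u(P)\times(Tz_0)$, where $Tz_0$ is the open $T$-orbit of $Z$, so separability of the open $B$-orbit reduces to that of $T\rw Tz_0$. Writing $\bar T'$ for the torus acting faithfully on $Z$, this factors as $T\rw L/[L,L]\rw\bar T'$, and I would first check that $T\rw L/[L,L]$ is always separable (the cokernel of the induced map on character lattices injects into $X^*(T\cap[L,L])$, hence is torsion-free). Thus the only possible inseparability lives in $L/[L,L]\rw\bar T'$. But this same map governs the $G_x$-orbit in $N_{\omega/X,x}\cong T_{Z,x}$: since $G_x\rw L/[L,L]$ is separable (quotients by the smooth groups $R_u(G_x)$ and $[L,L]$) and $\bar T'$ acts on $T_{Z,x}$ with weights forming a basis of $X^*(\bar T')$, the separability of the open $G_x$-orbit in $N_{\omega/X,x}$ — which holds because $X$ is strongly regular — is equivalent to that of $L/[L,L]\rw\bar T'$. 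Feeding this back shows $Tz_0$, and hence the open $B$-orbit, is separable, establishing (1).

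The main obstacle I anticipate is precisely the positive-characteristic phenomenon isolated in the last step: a surjective homomorphism of tori need not be separable, so passing from ``spherical and colorless'' to ``separably spherical'' is a genuine separability statement about the faithful torus $\bar T'$ of the slice $Z$. The technical heart is the bookkeeping that $T\rw L/[L,L]$ is automatically separable, so that no inseparability is introduced there, together with the matching of two a priori unrelated separabilities — that of the $G_x$-orbit in the normal space at $\omega$ (the strongly regular hypothesis, available via Theorem \ref{lhfr}) and that of the open Borel orbit in $\Omega$ — both being controlled by $L/[L,L]\rw\bar T'$ through the toric weight analysis on $T_{Z,x}$. Everything else is formal.
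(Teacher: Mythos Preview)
Your proof is correct and follows the same route as the paper: the equivalences are handled by tangent-space linear algebra at a point plus conjugacy of Borel subgroups, and the implication is obtained by chaining Theorems \ref{lhfr} and \ref{sscsr} with the local structure Theorem \ref{structurelocale} to reduce separability of the open $B$-orbit to separability of the torus action on the toric slice $Z$. Your treatment is in fact more careful than the paper's on one point: the paper simply asserts that $T$ (not only $G_x$) has a separable open orbit in $N_{\omega/X,x}$, whereas you make explicit the factorization $T\to L/[L,L]\to\bar T'$, check that $T\to L/[L,L]$ is separable, and then match the separability of $L/[L,L]\to\bar T'$ with the strongly regular hypothesis, which is exactly what is needed to justify that passage.
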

\begin{proof}
We suppose first that the homogeneous space $\Omega$ possesses a log homogeneous compactification $X$ and we prove that it satisfies the first condition.
By Theorem \ref{lhfr} and \ref{sscsr}, the homogeneous space $\Omega$ is spherical. Let $\omega$ be a closed, and therefore complete and separable, orbit of $G$ in $X$.
We apply Theorem \ref{structurelocale} to the open subvariety $X_{\omega,G}$. We use the notations introduced for this theorem. As $X$ is strongly regular, the maximal 
torus $T$ has an open and separable orbit in $T_{Z,x}=N_{\omega/X,x}$. As this space endowed with its action of $T$ is isomorphic to $Z$ endowed with its action of $T$, 
because $Z$ is an affine smooth toric variety with fixed point for a quotient of $T$, we see that the open orbit of $T$ in $Z$ is separable. 
Consequently, the open orbit of $R_u(P)\rtimes T$ in
$R_u(P)\times Z$ is separable, and the open orbit of $B$ in $\Omega$ is separable.

\medskip

We prove now that the three conditions in the statement of the proposition-definition are equivalent.
As the Borel subgroups of $G$ are conjugated, condition $(1)$ and $(2)$ are equivalent.
Suppose now that condition $(1)$ is satisfied. Let $B$ be a Borel subgroup of $G$ and $x$ a point in the open and separable orbit of $B$ in $\Omega$.
The linear map
$d_e orb_x :\mk{b}\rw T_{Bx,x}$
is surjective. As the orbit $Bx$ is open in $\Omega$ we see that the homogeneous space $\Omega$ is separable under the action of $G$ and that 
$$\mk{b}+\lieg_x=\lieg.$$
Conversely, we suppose that condition $(3)$ is satisfied.
As the homogeneous space $\Omega$ is separable, the linear map
$$d_e orb_x:\lieg\rw \lieg/\mathfrak{g}_x$$
is the natural projection. As we have 
$\mk{b}+\lieg_x=\lieg,$
we see that the linear map
$$d_e orb_x:\mk{b}\rw \lieg/\mathfrak{g}_x$$
is surjective. This means precisely that the orbit $Bx$ is open in $\Omega$ and separable.
\end{proof}

Here are some example of separably spherical homogeneous spaces: separable quotients of tori, partial flag varieties, symmetric spaces in characteristic not $2$ (Vust 
proves in \cite{Vu} that symmetric spaces in characteristic zero are spherical; his proof extends to characteristic not $2$ to show that symmetric spaces 
are separably spherical).

 \begin{thm}\label{affaiblissement}
We assume that the homogeneous space $\Omega$ is separably spherical. Let $X$ be a smooth compactification of $\Omega$.
The following conditions are equivalent:
\begin{enumerate}[(1)]
\item $X$ has no color and the closed orbits of $G$ in $X$ are separable.
\item $X$ is regular.
\item $X$ is strongly regular.
\item $X$ is log homogeneous under the action of $G$.
\end{enumerate}
\end{thm}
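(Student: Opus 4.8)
The plan is to treat $(3)\Leftrightarrow(4)$, $(3)\Rightarrow(2)$ and $(2)\Rightarrow(1)$ as essentially formal consequences of the results already established, and to concentrate the real work on the single implication $(1)\Rightarrow(4)$, which closes the cycle $(1)\Rightarrow(4)\Rightarrow(3)\Rightarrow(2)\Rightarrow(1)$. Indeed, Theorem \ref{lhfr} gives $(3)\Leftrightarrow(4)$ at once; a strongly regular compactification is regular by direct inspection of Definition \ref{defreg} (its hypotheses are those of regularity together with the separability of the open orbit in the normal space), so $(3)\Rightarrow(2)$; and if $X$ is regular then Theorem \ref{sscsr} shows that $\Omega$ is spherical, that $X$ has no color, and that \emph{all} $G$-orbits, in particular the closed ones, are separable, which is $(2)\Rightarrow(1)$. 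Thus everything reduces to producing log homogeneity out of the weaker hypothesis $(1)$, using that $\Omega$ is separably spherical.

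For $(1)\Rightarrow(4)$ I would argue locally. Since $X$ is complete with finitely many orbits, it is covered by the open $G$-stable subvarieties $X_{\omega,G}$ as $\omega$ runs over the closed orbits (every orbit closure contains a closed orbit), and log homogeneity, being the surjectivity of the bundle map $X\times\lieg\rw\txd$, may be checked on this cover. Fix a closed orbit $\omega$; it is complete, and separable by hypothesis $(1)$, so Proposition \ref{key1} and Theorem \ref{structurelocale} apply to $X_{\omega,G}$. Because $X$ has no color and $\Omega$ is spherical, part $(3)$ of Theorem \ref{structurelocale} yields an isomorphism $R_u(P)\times Z\rw X_0$ with $Z$ a smooth affine toric variety for a quotient $\overline{T}$ of $T$, having fixed point $x$, on which $[L,L]$ acts trivially, and with each $G$-orbit meeting $Z$ in a single $T$-orbit. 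The separably spherical hypothesis now enters: the open orbit of $B$ in $\Omega$ is separable, and transporting this through $R_u(P)\times Z\cong X_0$, using $P=R_u(P)\rtimes L$ and the triviality of the $[L,L]$-action, shows that the open orbit of $T$ in $Z$ is separable, i.e. that the orbit map $T\rw\overline{T}$ has reduced kernel.

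The heart of the matter, which I expect to be the main obstacle, is then the toric computation. I must check that a smooth affine toric variety $Z$ on which the torus acts with separable dense orbit is log homogeneous for that torus, i.e. that $Z\times\mk{t}\rw T_Z(-\lo D_Z)$ is surjective, $D_Z$ being the toric boundary. Writing $Z\cong\mathbb{A}^r$ with $\overline{T}=\mathbb{G}_m^r$ acting by the standard characters, the logarithmic tangent bundle is trivial with fibre spanned by the Euler fields $z_i\partial_{z_i}$, which are precisely the images of a basis of $\op{Lie}(\overline{T})$; since $T\rw\overline{T}$ is separable, $\mk{t}\rw\op{Lie}(\overline{T})$ is onto, so $\mk{t}$ generates $T_Z(-\lo D_Z)$ at every point. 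I would then transfer this back through $R_u(P)\times Z\cong X_0$: the $\mk{p}_u$-directions generate the $R_u(P)$-factor of the logarithmic tangent bundle, while $\mk{t}\subseteq\lieg$ generates the $Z$-factor, so $X_0\times\lieg\rw\txd$ is surjective over $X_0$; translating $X_0$ by elements of $G$ covers $X_{\omega,G}$, and letting $\omega$ vary covers $X$, whence $X$ is log homogeneous. The two points needing care are the compatibility of $D$ with the product decomposition, so that the logarithmic tangent bundle genuinely splits as a sum of an $R_u(P)$-factor and a toric factor (this is guaranteed by the strict-normal-crossing statement in Theorem \ref{structurelocale}), and the separability bookkeeping relating the open orbit of $B$ to that of $T$, which follows from the semidirect product structure $P=R_u(P)\rtimes L$ together with the triviality of the $[L,L]$-action on $Z$.
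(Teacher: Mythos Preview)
Your argument is correct and follows essentially the same route as the paper: reduce via Theorems \ref{sscsr} and \ref{lhfr} to a single missing implication, then apply Theorem \ref{structurelocale} to each $X_{\omega,G}$, use separable sphericality to deduce that the quotient torus acts separably on the slice $Z$, and conclude from the toric picture. The only cosmetic difference is that the paper closes the cycle by proving $(1)\Rightarrow(3)$ (verifying the strongly regular conditions orbit by orbit on $Z$ and lifting), whereas you prove $(1)\Rightarrow(4)$ (verifying surjectivity of $\lieg\rw\txd$ directly via the Euler fields on $Z\cong\mathbb{A}^r$ and lifting); since $(3)\Leftrightarrow(4)$ is already in hand, this is a distinction without a difference.
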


\begin{proof}
In view of Theorem \ref{sscsr} and \ref{lhfr} it suffices to show that $(1)\Rightarrow (3)$. 
We assume that condition $(1)$ is satisfied. Let $\omega$ be a closed, and therefore separable orbit of $G$ in $X$. 
We apply Theorem \ref{structurelocale} to the open subvariety $X_{\omega,G}$ of $X$ introduced in the proof of Theorem \ref{sscsr}.
We use the notations introduced for Theorem \ref{structurelocale}. As the open orbit of $B$ in $\Omega$ is separable, we see that the quotient of $T$ acting on 
$Z$ is separable. As $Z$ is a smooth affine toric variety with fixed point under this quotient, we see that the orbits of $T$ in $Z$ are all separable and 
that for each point $z\in Z$, the stabilizer $T_z$ has an open and separable orbit in the normal space $N_{Tz/Z,z}$. From this we get readily that the embedding 
$X_{\omega,G}$ of $\Omega$ satisfies the conditions defining a strongly regular embedding. As this is true for each closed orbit $\omega$, we see that $X$ is a strongly 
regular compactification of $\Omega$.
\end{proof}

We end this section with an example of a regular compactification of a homogeneous space which is not strongly regular.
We suppose that the base field $k$ has characteristic $2$. Let $G$ be the group $\op{SL}(2)$ acting on 
$X:=\mathbb{P}^1\times\mathbb{P}^1$. There are two orbits:
the open orbit $\Omega$ of pairs of distinct points and the closed orbit $\omega$, the diagonal, which has codimension one in $X$.
These orbits are separable under the action of $G$. Moreover, the complement of the open orbit, that is, the closed orbit $\omega$,
is a strict normal crossing divisor and the partial intersections 
of its irreducible components are the closure of $G$-orbits in $X$. A quick computation shows that for each point on the closed orbit $\omega$, the isotropy group 
has an open non separable orbit in the normal space to the closed orbit at that point. Therefore the compactification $X$ of $\Omega$ is regular and not strongly regular.
By Theorem \ref{affaiblissement} the homogeneous space cannot be separably spherical. This can be seen directly as follows. The homogeneous space $\Omega$ is the quotient of $G$ by a maximal torus $T$. A Borel subgroup $B$ 
of $G$ has an open orbit in $\Omega$ if and only if it does not contain $T$. But in that case the intersection $B\cap T$ is the center of $G$, which is not reduced because 
the characteristic of the base field is $2$.

\subsection{Log homogeneous compactifications and fixed points}\label{fixed}
Let $X$ be a smooth variety over the field $k$ and $\si$ an automorphism of $X$ which has finite order $r$ prime to the characteristic $p$ of $k$.
Fogarty proves in \cite{Fo} that the fixed point subscheme $X^{\si}$ is smooth and that, for each fixed point $x$ of $\si$ in $X$, the tangent space 
to $X^{\si}$ at $x$ is $T_{X,x}^{\si}$. 

\medskip

We suppose now that $X$ is a smooth log homogeneous compactification of the homogeneous space $\Omega$.
We also assume that the automorphism $\si$ leaves $\Omega$ stable and is $G$-equivariant, in the sense that there exists an automorphism $\si$ of 
the group $G$ satisfying 
$$\forall g\in G,\quad \forall x\in X,\quad \si(gx)=\si(g)\si(x).$$
By \cite[Proposition $10.1.5$]{Ri}, the neutral component $G'$ of the group $G^{\si}$ is a reductive group. 
Moreover, each connected component of the variety $\Omega^{\si}$ is a homogeneous space under the action of $G'$. We let $\Omega'$ be such a component and $X'$ be the 
connected component of $X^{\si}$ containing $\Omega'$.

\begin{prop}\label{fixedpoints}
$X'$ is a log homogeneous 
compactification of $\Omega'$ under the action of $G'$.

\end{prop}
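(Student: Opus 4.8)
The plan is to combine Fogarty's description of the fixed-point scheme with the exactness of the functor of $\si$-invariants, which is available precisely because $\si$ has order prime to $p$.

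First I would record the formal properties. By \cite{Fo} the subscheme $X^{\si}$ is smooth, and it is closed in the complete variety $X$, hence complete; as $X'$ is a connected component it is a smooth complete variety, and being smooth and connected it is irreducible. The intersection $\Omega\cap X'$ is open in $X'$, hence dense and connected, so it coincides with $\Omega'$, which by hypothesis is a single $G'$-orbit. Thus $X'$ is a smooth compactification of $\Omega'$, with boundary
$$D':=X'\setminus\Omega'=D\cap X'.$$
It remains to prove that $D'$ is a strict normal crossing divisor and that the infinitesimal action of $\lieg'$ makes $X'$ log homogeneous.

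The heart of the argument is a local computation at a fixed point $x\in X'$ together with the identification of the logarithmic tangent bundle of $(X',D')$ with the $\si$-invariants of that of $(X,D)$. Since $\langle\si\rangle$ is linearly reductive, I would linearize the action of $\si$ on the complete local ring at $x$ and choose a regular system of parameters $z_1,\dots,z_d$ such that $D=\{z_1\cdots z_s=0\}$ near $x$ and $\si$ acts monomially, permuting the branches $\{z_i=0\}$ through $x$ up to scalars and acting by eigenvalues on a transverse set of parameters. Because $x$ is fixed, $\si$ permutes the branches through $x$, so for each $\si$-orbit $O$ of such branches one has $x\in\bigcap_{i\in O}\{z_i=0\}$; applying \cite{Fo} to this smooth $\si$-stable intersection shows that $\bigcap_{i\in O}\{z_i=0\}\cap X'$ is smooth, and a tangent-space count shows it has codimension $1$ in $X'$ --- the only possible obstruction, a nontrivial scalar monodromy around the orbit, cannot occur, since it would force $X'$ to be contained in $D$. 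These smooth branches, one per orbit, are pairwise transverse because the differentials $dz_i$ are independent and $\si$-invariants are exact, so $D'$ is a strict normal crossing divisor. The same coordinates identify the fibres: a $\si$-invariant logarithmic vector field is tangent to $X^{\si}$ and preserves $D'$, so restriction gives a morphism $(\txd)^{\si}|_{X'}\rw\txdd$; on the logarithmic directions $z_i\partial_{z_i}$ the scalars cancel and $\si$ acts by the bare permutation of branches, so on fibres this morphism is an isomorphism, yielding $(\txd)^{\si}|_{X'}\cong\txdd$.

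With this in hand the conclusion is formal. The infinitesimal action $X\times\lieg\rw\txd$ is $\si$-equivariant, where $\si$ acts on $\lieg$ through the automorphism of $G$, and it is surjective because $X$ is log homogeneous. Restricting to $X'$ and passing to $\si$-invariants preserves surjectivity, since over each fixed point the fibres are modules under the linearly reductive group $\langle\si\rangle$. As $\si$ has order prime to $p$ one has $\lieg^{\si}=\op{Lie}(G^{\si})=\lieg'$, and the identification above turns the resulting surjection into the morphism
$$X'\times\lieg'\rw\txdd$$
induced by the $G'$-action. Hence this morphism is surjective and $X'$ is a log homogeneous compactification of $\Omega'$. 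The main obstacle is the local step: producing a $\si$-linearization with coordinates adapted to $D$ when $\si$ permutes the branches, and checking that this permutation merges branches into a strict normal crossing divisor on $X'$ while inducing, on logarithmic directions, a bare permutation whose invariants match $\txdd$ fibrewise. Everything else --- completeness, the reductivity of $G'$ via \cite{Ri}, and the exactness of invariants --- is formal.
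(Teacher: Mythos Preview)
Your proposal is correct and follows essentially the same route as the paper: both arguments show $D'=D\cap X'$ is a strict normal crossing divisor by analyzing how $\si$ permutes the branches of $D$ through a fixed point and choosing adapted local parameters, then identify $T_{X'}(-\log D')_x$ with the $\si$-invariants of $T_X(-\log D)_x$, and finally use the linear reductivity of $\langle\si\rangle$ to pass the surjectivity of $\lieg\to T_X(-\log D)_x$ to the invariants. Your treatment of the possible scalar monodromy on a cycle of branches (ruled out because otherwise $X'\subset D$) and of the cancellation of scalars on the logarithmic directions $z_i\partial_{z_i}$ is slightly more explicit than the paper, which simply asserts that one may choose parameters permuted by $\si$ and leaves the identification of the log tangent fibres as an exercise; but the underlying ideas are the same.
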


\begin{proof}
Let $D$ be the complement of $\Omega$ in $X$. Let $D_1,\ldots,D_s$ be the irreducible components of $D$ containing $x$.
First we prove that the intersection $D':=D\cap X'$ is a strict normal crossing divisor. Let $x$ be a point in $X'$.
 For each index $i$, the intersection $D'_i:=D_i\cap X'$ is a divisor on $X'$.
Indeed, $X'$ is not contained in $D_i$ as it contains $\Omega'$. 
As $x$ is fixed by the automorphism $\si$, we can assume that the components $D_i$s are 
ordered in such a way that
$$\si(D_2)=D_1\text{ }\ldots\text{ }  \si(D_{i_1})=D_{i_1-1},\text{ } \si(D_1)=D_{i_1}$$
$$\ldots$$
$$\si(D_{i_{t-1}+2})=D_{i_{t-1}+1}\text{ }\ldots\text{ } \si(D_{i_t})=\si(D_s)=D_{i_t-1},\text{ } \si(D_{i_{t-1}+1})=D_{i_t}.$$
By convention we define $i_0=0$. For each integer $j$ from $1$ to $t$, and each integer $i$ from $i_{j-1}+1$ to $i_j$ we have $D'_i=D'_{i_j}$. 
Therefore we see that $D'_{i_j}$ is the connected component of the smooth variety $(D_{i_{j-1}+1}\cap\cdots\cap D_{i_j})^{\si}$ containing $x$. 
Consequently, it is smooth. For the moment, we have proved that $D'$ is a divisor on $X'$ whose irreducible components are smooth. 

\medskip

We prove now that the divisor $D'_{i_1},\ldots,D'_{i_t}$ intersect transversally at the point $x$.
Let $U_x$ be an open neighborhood of $x$ in $X$ which is stable by the automorphism $\si$ and on which the equation of $D$ is $u_1\ldots u_s=0$, where 
$u_1\ldots u_s\in\mathcal{O}_X(U_x)$ are part of a regular local parameter system at $x$ and satisfy:
$$\si(u_2)=u_1\text{ }\ldots\text{ }  \si(u_{i_1})=u_{i_1-1}$$
$$\ldots$$
$$\si(u_{i_{t-1}+2})=u_{i_{t-1}+1}\text{ }\ldots\text{ } \si(u_{i_t})=u_{i_t-1}.$$
We aim to prove that the images of the differential $d_x u_{i_j}$ by the natural projection 
$$(T_{X,x})^*\rw (T_{X',x})^*$$
are linearly independent, where $j$ run from $1$ to $t$.
As the point $x$ is fixed by $\si$, $\si$ acts by the differential on the tangent space $T_{X,x}$ and by the dual action on $(T_{X,x})^*$.
As the order of the automorphism $\si$ is prime to the characteristic $p$, we have a direct sum decomposition:
$$(T_{X,x})^*=((T_{X,x})^*)^{\si}\oplus\op{Ker}(id+\si+\cdots+\si^{r-1})$$
where the projection on the first factor is given by 
$$l\mapsto \frac{1}{r}(l+\si(l)+\cdots+\si^{r-1}(l)).$$
Moreover, as $T_{X',x}$ is equal to $(T_{X,x})^{\si}$, the second factor in this decomposition is easily seen to be $(T_{X',x})^{\perp}$, so that the natural 
projection 
$$(T_{X,x})^*\rw (T_{X',x})^*$$
gives an isomorphism 
$$((T_{X,x})^*)^{\si}\rw (T_{X',x})^*.$$
Finally, the images of the differential $d_x u_{i_j}$ in $(T_{X',x})^*$ are linearly independent, because the differentials $d_x u_i$ are linearly independent in 
$(T_{X,x})^*$.

\medskip

We have proved that the divisor $D'$ is a strict normal crossing divisor. We leave it as an exercise to the reader to prove that there exists a natural 
exact sequence of vector bundle on $X'$ 
$$0\rw \txdd\rw \txd_{\mid X'}\rw N_{X'/X}\rw 0,$$
and that the space $T_{X'}(-\op{log} D')_x$ is the subspace
of fixed point by $\si$ in the space $T_{X}(-\op{log} D)_x$. 
Now, the compactification $X$ is log homogeneous, therefore the linear map 
$$\lieg\rw \txd_{x}$$
is surjective. As $r$ and $p$ are relatively prime, this linear map is still surjective at the level of fixed points. That is, the linear map 
$$\lieg^{\si}\rw \txd_{x}^{\si}=T_{X'}(-\op{log} D')_x$$
is surjective. This complete the proof of the proposition.
\end{proof}

\subsection{The example of reductive groups}\label{example}
In this section the homogeneous space $\Omega$ is a connected reductive group $G$ acted upon by the group $G\times G$ by the following formula:
$$\forall (g,h)\in G\times G,\quad \forall x\in G,\quad (g,h)\cdot x=gxh^{-1}$$
We would like to explain here the classification of smooth log homogeneous compactifications of $G$. 
Observe that the homogeneous space $G$ under the action of $G\times G$ is actually separably spherical. 
By Theorem \ref{affaiblissement}, its smooth 
log homogeneous compactifications are
the smooth colorless compactifications with separable closed orbits. The last condition is actually superfluous : by \cite[Chapter $6$]{BrKu}, 
the closed orbits of $G\times G$ in a colorless 
compactification of $G$ are isomorphic to $G/B\times G/B$, where $B$ is 
a Borel subgroup of $G$. The log homogeneous 
compactifications of $G$ are therefore the smooth colorless one.

\medskip

\medskip

We now recall the combinatorial description of the smooth colorless compactifications of $G$.
Let $T$ be a maximal torus of $G$ and $B$ a Borel subgroup of $G$ containing $T$. 
We denote by $V$ the  
$\mathbb{Q}$-vector space spanned by the one-parameter subgroups of $T$ and by $\mathcal{W}$ the Weyl chamber corresponding to $B$.
Let $X$ be a smooth colorless embedding of $G$. We let the torus $T$ act ``on the left'' on $X$. For this action,  
the closure of $T$ in $X$ is a smooth complete toric variety. We associate 
to $X$ the fan consisting of those cones in the fan of the toric variety $\overline{T}$ which are included in $-\mathcal{W}$. This sets a map from 
the set of smooth colorless compactifications of $G$ to the set of fans in $V$ with support $-\mathcal{W}$ and which are smooth with respect to the lattice of one parameter subgroup in $V$.
This map is actually a bijection, see for instance \cite[Chapter $6$]{BrKu}.

\section{Explicit compactifications of classical groups}\label{explicit}
We construct a log homogeneous compactification $\gr_n$ of the general linear group $\op{GL}(n)$ by successive blow-ups, starting from 
a grassmannian. The precise procedure is explained in Section 
\ref{defofcomp}. The compactification $\gr_n$ is defined over an arbitrary base scheme. In Section \ref{sectionatlas} we study the local structure
of the action of $\op{GL}(n)\times\op{GL}(n)$ on $\gr_n$, still over an arbitrary base scheme. This enables us to compute the colored fan of $\gr_n$ over an algebraically 
closed field in Section \ref{colored}. 
Using this computation, we are able to identify the compactifications of the special orthogonal group or the symplectic group obtained by taking the 
fixed points of certain involutions on the compactification $\gr_n$. In the odd orthogonal and symplectic case we obtain the wonderful compactification.
In the even orthogonal case we obtain a log homogeneous compactification with two closed orbits. This is the minimal number of closed orbits on a smooth 
log homogeneous compactification, as the canonical compactification of $\op{SO}(2n)$ is not smooth.

\subsection{The compactifications $\gr_m$}\label{defofcomp}
As we mentioned above our construction works over an arbitrary base scheme : until the end of Section \ref{alternative} we work over a base 
scheme $S$. Let $\mav_1$ and $\mav_2$ be two free modules of constant finite rank $n$ on $S$. We denote by
$\mav$ the direct sum of $\mav_1$ and $\mav_2$. We denote by $p_1$ and $p_2$ the projections respectively on the first 
and the second factor of this direct sum. We denote by $G$ the group scheme $\op{GL}(\mav_1)\times \op{GL}(\mav_2)$ which is a subgroup scheme of $\op{GL}(\mav)$.

\begin{defi}
We denote by $\Omega:=\op{Iso}(\mav_2,\mav_1)$ the scheme over $S$ parametrizing the isomorphisms from $\mav_2$ to $\mav_1$. 
\end{defi}
There is a natural action of the group scheme $G$ on $\Omega$, via the following formulas 
$$\forall (g_1,g_2)\in G,\quad \forall x\in \Omega,\quad (g_1,g_2)\cdot x=g_1 x g_2^{-1}$$
For this action, $\Omega$ is a homogeneous space under the action of $G$.
\begin{defi}
We denote by $\gr$ the grassmannian
$$\pi : \mathcal{G}r_S(n,\mav)\rw S$$ 
parametrizing the submodules
of $\mav$ which are locally direct summands of rank $n$. 
We denote by $\taut$ the tautological module on $\gr$.  
\end{defi}
The module $\taut$ is a submodule of $\pi^*\mav$ which is locally a direct summand of finite constant rank $n$.
There is a natural action of the group scheme $\op{GL}(\mav)$, and therefore of the group scheme $G$, on the grassmannian $\gr$.
Moreover, $\Omega$ is contained in $\gr$ as a $G$-stable open subscheme via the graph
$$\Omega\rw \gr,\quad x\mapsto \op{Graph}(x).$$
\begin{defi}\label{p}
We denote by $p$ the following morphism of modules on the grassmannian $\gr$ : 
$$p=\pi^*p_1\oplus \pi^*p_2 : \taut^{\oplus 2} \rw \pi^*\mav.$$
\end{defi}

\begin{defi}\label{hd}
For $d\in[\![0,n]\!]$, we denote by $\mah_d$ the locally free module
$$\mah om(\bigwedge^{n+d}(\taut^{\oplus 2}),\bigwedge^{n+d}(\pi^*\mathcal{V})).$$
on the grassmannian $\gr$.
\end{defi}

\begin{defi}
For $d\in[\![0,n]\!]$, the exterior power $\wedge^{n+d}p$ is a global section of $\mah_{d}$. We denote by $\maz_d$ the zero locus of $\wedge^{n+d}p$ on the grassmannian $\gr$.
 
\end{defi}
We define in this way a sequence of $G$-stable closed subschemes on the grassmannian $\gr$
$$\maz_0\subset \maz_1 \subset\cdots\subset \maz_{n}\subset \gr.$$
Observe that the closed subscheme $\maz_0$ is actually empty. 
Moreover, it is easy to prove that the open subscheme $\Omega$ is the complement of $\maz_{n}$ in $\gr$.

\medskip

We will now define a sequence of blow-ups
$$\begin{tikzpicture}
\node (A) at (-1,0) {$\gr_{n}$};  
\node (B) at (2,0) {$\gr_{n-1}$};
\node (C) at (5,0) {$\ldots$};
\node (D) at (7.5,0) {$\gr_1$};
\node (E) at (10,0) {$\gr_0$};
\draw[->,>=latex] (A) to (B);
\draw[->,>=latex] (B) to (C);
\draw[->,>=latex] (C) to (D);
\draw[->,>=latex] (D) to (E);

\draw (0.5,0.2) node[font=\small] {$b_n$};
\draw (8.75,0.2) node[font=\small] {$b_1$};
\end{tikzpicture}$$
and, for each integer $m$ between $0$ and $n$, a family of closed subschemes
$\maz_{m,d}$ of $\gr_m$, where $d$ runs from $m$ to $n$.  

\begin{defi}
Let $m\in[\![0,n]\!]$ and $d\in[\![m,n]\!]$. The definition is by induction:
\begin{itemize}
\item For $m=0$, we set $\gr_0:=\gr$ and $\maz_{0,d}:=\maz_d$.

\item Assuming that the scheme $\gr_{m-1}$ and its subschemes $\maz_{m-1,d}$ are defined, we define 
$$b_m : \gr_m\rw \gr_{m-1}$$ to be the blow-up centered at 
$\maz_{m-1,m}$ and, for each integer $d$ from $m$ to $n$, we define $\maz_{m,d}$ to be the strict transform of $\maz_{m-1,d}$ that is, the schematic closure 
of $$b_m^{-1}(\maz_{m-1,d}\setminus \maz_{m-1,m})$$ in $\gr_m$. 
 
\end{itemize}
Moreover, we denote by 
 $\mathcal{I}_{m,d}$ the ideal sheaf on $\gr_m$ defining $\maz_{m,d}$.
\end{defi} 
 
 The group scheme $G$ acts on the schemes $\gr_m$ and leaves the subschemes $\maz_{m,d}$ globally invariant. 
Modulo Proposition \ref{atlas} below, we prove now:

\begin{thm}\label{comp}
For each integer $m$ from $0$ to $n-1$, the $S$-scheme $\gr_m$ is a smooth projective compactification of $\Omega$.
\end{thm}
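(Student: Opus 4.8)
The plan is to argue by induction on $m$, establishing simultaneously that $\gr_m$ is smooth and projective over $S$ and that $\Omega$ sits inside $\gr_m$ as a dense open subscheme, for $m$ running from $0$ to $n-1$. The base case $m=0$ is immediate: $\gr_0=\gr$ is the relative grassmannian $\mathcal{G}r_S(n,\mav)$, which is smooth and projective over $S$, and $\Omega$ is by construction the open complement of the closed subscheme $\maz_n$, hence a dense open subscheme (both have the same relative dimension $n^2$ over $S$, and the fibres of $\gr$ are irreducible). For the inductive step I would assume the statement for $\gr_{m-1}$ and deduce it for $\gr_m=\op{Bl}_{\maz_{m-1,m}}\gr_{m-1}$, with $1\le m\le n-1$.

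Two of the three required properties are soft. For projectivity, a blow-up of a scheme projective over $S$ along a closed subscheme is again projective over $S$, and a composition of projective morphisms is projective, so the tower $\gr_m\rw\gr_{m-1}\rw\cdots\rw\gr_0\rw S$ is projective. For the embedding of $\Omega$, I would note that every center $\maz_{m-1,m}$ is contained in $\maz_{m-1,n}$, the strict transform of $\maz_n$, and that $\maz_n$ is disjoint from $\Omega$; since $b_m$ is an isomorphism over the complement of its center, $\Omega$ lifts isomorphically to an open subscheme of $\gr_m$. As each $b_m$ is proper and an isomorphism over a dense open, density of $\Omega$ is preserved along the tower. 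Equivariance is automatic, since every center and every blow-up is $G$-stable, so $\Omega$ is a $G$-stable dense open, i.e. an embedding.

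The crux is smoothness, and this is exactly where Proposition \ref{atlas} enters. I would use the explicit atlas it provides: near each point of $\gr_{m-1}$ it describes an affine chart in which the tautological module $\taut$ and the ideal sheaves $\mathcal{I}_{m-1,d}$ of the subschemes $\maz_{m-1,d}$ are written in explicit coordinates. From such a local model one reads off that the center $\maz_{m-1,m}$ is smooth over $S$ and regularly embedded in $\gr_{m-1}$. Granting this, the inductive step reduces to the classical fact that the blow-up of a scheme smooth over $S$ along a closed subscheme that is smooth over $S$ and regularly embedded is again smooth over $S$ (the exceptional divisor being a projective bundle over the center). Equivalently, one may simply check smoothness chart by chart on the atlas for $\gr_m$ supplied by Proposition \ref{atlas}, since the strict transform of a coordinate subscheme under the blow-up of affine space along a coordinate center is again a smooth coordinate subscheme.

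The main obstacle is therefore entirely contained in the local analysis: one must verify that at every point the center $\maz_{m-1,m}$ is cut out, in suitable local coordinates, by a regular sequence defining a smooth $S$-subscheme, and that the successive strict transforms neither acquire singularities nor destroy the smoothness of the next center as one climbs the tower. This is precisely what I would extract from Proposition \ref{atlas}; once it is in hand the remaining bookkeeping is routine. Finally, I would observe that the restriction $m\le n-1$ reflects only that the last stage $\gr_n$ is recorded separately, while the inductive step above applies verbatim to each blow-up $b_m$ with $1\le m\le n-1$.
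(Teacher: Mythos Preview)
Your proposal is correct and, for projectivity and for the fact that $\Omega$ remains a dense open, essentially matches the paper's argument. The only noteworthy difference is in the smoothness step. You propose to use Proposition~\ref{atlas} to exhibit the center $\maz_{m-1,m}$ as smooth and regularly embedded in each chart, and then to invoke the general fact that the blow-up of a smooth $S$-scheme along a smooth regularly embedded center is again smooth; you mention the direct chart-by-chart check only as an ``equivalently''. The paper reverses this emphasis: Proposition~\ref{atlas} already furnishes, for every $m$, an atlas of open subschemes $\mau_{f,m}=R_u(P_{f,m})\times\maf_{f,m}$ that are \emph{affine spaces over $S$}, and smoothness of $\gr_m$ is read off from that in one line, with no appeal to the smoothness of centers or to general blow-up theory. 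Your route works (in the chart $\maf_{f,m-1}$ the ideal $\mai_{f,m-1,m}$ is generated by the size-$1$ minors, i.e.\ by coordinates, so the center is indeed a coordinate linear subspace), but the paper's route avoids the inductive bookkeeping you flag at the end of your argument about ``successive strict transforms'' --- none of that is needed once one observes that each $\mau_{f,m}$ is literally an affine space.
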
 
\begin{proof}
By Proposition \ref{atlas} the scheme $\gr_m$ is covered by a collection of open subschemes isomorphic to affine spaces over $S$. In particular, the $S$-scheme 
$\gr_m$ is smooth. It is a classical fact that the grassmannian $\gr$ is projective over $S$. As the blow-up of a projective scheme over $S$ along a closed subscheme is 
projective over $S$, we see that $\gr_m$ is projective over $S$. Finally, observe that the open subscheme $\Omega$ of $\gr$ is disjoint from the closed subscheme 
$\maz_n$ and therefore from each of the closed subscheme $\maz_{d}$. As a consequence, $\Omega$ is an open subscheme of each of the $\gr_m$. 
 
\end{proof}

\subsection{An atlas of affine charts for $\gr_m$}\label{sectionatlas}

Let $V$ be the set $[\![1,n]\!]\times\{1,2\}$. We denote by $V_1$ the subset $[\![1,n]\!]\times\{1\}$ and by $V_2$ the subset $[\![1,n]\!]\times\{2\}$.
We shall refer to elements of $V_1$ as elements of $V$ of type $1$ and elements of $V_2$ as elements of type $2$.
We fix a basis $v_i$, $i\in V$, of the free module $\mathcal{V}$. We suppose that $v_i$, $i\in V_1$ is a basis for $\mav_1$ and 
$v_i$, $i\in V_2$ is a basis for $\mav_2$. Moreover, for each subset $I$ of $V$, we denote by $\mav_{I}$ the free submodule of $\mathcal{V}$ spanned by the 
$v_i$s, where $i$ runs over $I$. For every integer $m$ from $1$ to $n$, we denote by $V^{> m}$ the set $[\![m+1,n]\!]\times\{1,2\}$.
We define the sets $V^{\geqslant m}$, $V^{<m}$ and $V^{\leqslant m}$ similarly. We also have, with obvious notations, the sets $V_1^{>m}$, 
$V_2^{> m}$, $V_1^{\geqslant m}$, 
$V_2^{\geqslant m}\ldots$

\medskip

One word on terminology. If $X$ is an $S$-scheme, by a point $x$ of $X$ we mean an $S$-scheme $S'$ and a point $x$ of the set $X(S')$. However, as it is usually unnecessary, 
we do not mention the $S$-scheme $S'$ and simply write: let $x$ be a point of $X$. 

\begin{defi}
We denote by $R$ the set of permutations $f$ of $V$ such that, for each integer $m$ from $1$ to $n$, the elements $f(m,1)$ and $f(m,2)$ of $V$ have different types.
\end{defi}

\begin{defi}
Let $f\in R$. We denote by $\mau_f$ the affine space  
 $$\op{Spec}(\mathcal{O}_S[x_{i,j}, (i,j)\in f(V_1)\times f(V_2)])$$
 over $S$. It is equipped with a structural morphism $\pi_f$ to $S$.
 and by $\maf_f$ the closed subscheme 
 $$\op{Spec}(\mathcal{O}_S[x_{i,j}, (i,j)\in (f(V_1)_1\times f(V_2)_2)\sqcup (f(V_1)_2\times f(V_2)_1)])$$
\end{defi}
We think of a point $x$ of $\mau_f$ as a matrix indexed by the set $f(V_1)\times f(V_2)$. For a subset $I_1$ of $f(V_1)$ and $I_2$ of $f(V_2)$, we denote 
by $x_{I_1,I_2}$ the submatrix of $x$ indexed by $I_1\times I_2$. For example, the closed subscheme $\maf_f$ is defined by the vanishing of the two matrices 
$x_{f(V_1)_1\times f(V_2)_1}$ and $x_{f(V_1)_2\times f(V_2)_2}$.

\begin{propdef}\label{0open}
Let $f\in R$. There exists a unique morphism  
 $$\iota_f : \mau_f \rw \gr$$ 
such that $\taut_f:=\iota_f^*\taut$ is the submodule of $\pi_f^*\mav$ spanned by  
$$\pi_f^*v_j+\sum_{i\in f(V_1)}x_{i,j}\pi_f^*v_i$$
where $j$ runs over the set $f(V_2)$. The morphism $\iota_f$ is an open immersion. 
We denote by $\gr_f$ the image of the open immersion $\iota_f$. The open subscheme $\gr_f$ cover the grassmannian $\gr$ as $f$ runs over the set $R$.
 \end{propdef}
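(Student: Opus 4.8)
The plan is to construct the morphism $\iota_f$ explicitly using the universal property of the grassmannian, then verify it is an open immersion by identifying its image as a standard affine chart and checking that $\iota_f$ gives a section of the restriction of the tautological bundle. First I would observe that the grassmannian $\gr = \mathcal{G}r_S(n,\mav)$ carries its tautological exact sequence $0 \rw \taut \rw \pi^*\mav \rw \mathcal{Q} \rw 0$, and that specifying a morphism $\mau_f \rw \gr$ over $S$ is the same as specifying a locally-direct-summand submodule of rank $n$ of the pullback of $\mav$ to $\mau_f$. The candidate submodule $\taut_f$ spanned by the $n$ sections $\pi_f^*v_j + \sum_{i\in f(V_1)} x_{i,j}\,\pi_f^*v_i$, for $j\in f(V_2)$, is free of rank $n$ because these sections are visibly $\mathcal{O}$-linearly independent: their coefficients on the basis vectors $\pi_f^*v_j$ ($j\in f(V_2)$) form the identity matrix, while the coefficients on the $\pi_f^*v_i$ ($i\in f(V_1)$) are the free variables $x_{i,j}$. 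Thus $\taut_f$ is a locally (indeed globally) direct summand, giving the unique morphism $\iota_f$ with $\iota_f^*\taut = \taut_f$; uniqueness is immediate from the universal property.

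To see that $\iota_f$ is an open immersion, I would recall the standard affine atlas of the grassmannian. The choice $f\in R$ amounts to a splitting $V = f(V_1)\sqcup f(V_2)$ with $|f(V_1)|=|f(V_2)|=n$, and the associated chart is the locus $\gr_f$ of rank-$n$ submodules $W\subseteq \mav$ for which the composite $W \hookrightarrow \mav \twoheadrightarrow \mav_{f(V_1)}$ is an isomorphism, equivalently $W$ is the graph of a homomorphism $\mav_{f(V_2)} \rw \mav_{f(V_1)}$. Such a homomorphism is recorded precisely by the matrix $(x_{i,j})_{(i,j)\in f(V_1)\times f(V_2)}$, so $\gr_f$ is canonically isomorphic to the affine space $\mau_f$, and under this isomorphism the tautological module restricts exactly to the module $\taut_f$ written above. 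This is just the classical Plücker/affine-chart description of the grassmannian, valid over an arbitrary base scheme. Hence $\iota_f$ is the inverse of this canonical isomorphism $\gr_f \cong \mau_f$ and is therefore an open immersion onto $\gr_f$.

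Finally, to show that the $\gr_f$ cover $\gr$ as $f$ runs over $R$, I would argue pointwise: given any point of $\gr$ over some $S'$, i.e. a rank-$n$ locally direct summand $W\subseteq \mav_{S'}$, I must exhibit an $f\in R$ such that $W$ lies in $\gr_f$, at least locally on $S'$. The standard affine charts of the grassmannian correspond to all $n$-element subsets $J\subseteq V$ such that $W \twoheadrightarrow \mav_J$ is an isomorphism, and every point is covered by some such chart. The only subtlety, and the point I expect to be the main obstacle, is that $R$ consists not of arbitrary $n$-subsets $J$ but of those arising from a permutation $f$ with $f(m,1),f(m,2)$ of opposite types for each $m$; equivalently $J=f(V_1)$ must contain, for each $m\in[\![1,n]\!]$, exactly one of the two ``rows'' indexed by $m$ across the two types. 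I would therefore need to check that the charts indexed by $R$ already suffice to cover, i.e. that every rank-$n$ locally direct summand admits, locally, a complementary coordinate subspace $\mav_J$ with this balanced shape. This follows because for each $m$ one of the two standard basis vectors of type $1$ and type $2$ attached to index $m$ can be chosen, after localizing, to complete a splitting; working one index $m$ at a time and using that $\mav$ decomposes as $\mav_1\oplus\mav_2$ with compatible bases, one produces the required $f\in R$. Assembling these local sections gives an open cover of $\gr$ by the $\gr_f$, completing the proof.
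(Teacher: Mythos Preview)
Your approach is the same as the paper's (which dismisses the whole thing as ``classical'' and simply identifies $\gr_f$ as the open subscheme parametrizing complements of $\mav_{f(V_1)}$). Two small corrections are worth making.

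First, a slip: the chart $\gr_f$ is the locus where the composite $W\hookrightarrow\mav\twoheadrightarrow\mav_{f(V_2)}$ (projection mod $\mav_{f(V_1)}$) is an isomorphism, not the projection to $\mav_{f(V_1)}$. Your ``equivalently'' clause is the correct statement, so this is harmless, but the two formulations you wrote are not equivalent to one another.

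Second, your worry about the covering is based on a misreading of $R$. The condition on $f\in R$ is that for each $m$ the \emph{images} $f(m,1)$ and $f(m,2)$ lie in different $V_i$'s; it does \emph{not} force $f(V_1)$ to contain exactly one of $(m,1),(m,2)$ for each $m$. In fact every $n$-element subset $J\subseteq V$ arises as $f(V_1)$ for some $f\in R$: if $|J\cap V_1|=k$ then $|(V\setminus J)\cap V_2|=k$ and $|J\cap V_2|=|(V\setminus J)\cap V_1|=n-k$, so one can match each element of $J$ with an element of $V\setminus J$ of the opposite type and use these $n$ pairs as the values $(f(m,1),f(m,2))$. Hence the $\gr_f$ for $f\in R$ already exhaust all the standard affine charts of $\gr$, and the covering is immediate with no further argument needed.
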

\begin{proof}
This is classical. The open subscheme $\gr_f$ of the grassmannian parametrizes the complementary submodules of $\mav_{f(V_1)}$ in $\mav$.
\end{proof}

\begin{defi}
Let $f\in R$. We denote by $P_{f,0}$ the subgroup scheme 
$$\op{Stab}_G(\mav_{f(V_1)})$$
of $G$. It is a parabolic subgroup. We also denote by $L_{f,0}$ its Levi subgroup  
$$L_{f,0}:=\op{Stab}_G(\mav_{f(V_1)},\mav_{f(V_2)})=\prod_{i,j\in\{1,2\}}\op{GL}(\mav_{f(V_i)_j})$$
 
\end{defi}
In the next proposition we describe the local structure of the action of the group scheme $G$ on $\gr$. This is analogous to Proposition \ref{key1}.
\begin{prop}\label{0formulas}
Let $f\in R$. The open subscheme $\gr_f$ of $\gr$ is left stable under the action of $P_{f,0}$. For the corresponding 
action of $P_{f,0}$ on $\mau_f$ through the isomorphism $\iota_f$, the closed subscheme $\maf_f$ is left stable under the action of 
$L_{f,0}$ and we have the following formulas
\begin{equation*}
\forall g\in L_{f,0},\quad \forall x\in\maf_f, \quad x'=g\cdot x\text{ where }
\begin{cases} 
x'_{f(V_1)_1,f(V_2)_2}=g_{f(V_1)_1}x_{f(V_1)_1,f(V_2)_2}g_{f(V_2)_2}^{-1}
\\
x'_{f(V_1)_2,f(V_2)_1}=g_{f(V_1)_2}x_{f(V_1)_2,f(V_2)_1}g_{f(V_2)_1}^{-1}
\end{cases}
\end{equation*}
Finally, the natural morphism 
$$m_{f,0} : R_u(P_{f,0})\times \maf_f\rw \mau_f,\quad (r,x)\mapsto r\cdot x$$
is an isomorphism.
\end{prop}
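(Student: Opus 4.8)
The plan is to carry out everything in the graph coordinates of Proposition-Definition \ref{0open}. On $\mau_f$ the module $\taut_f$ is the graph of the homomorphism $\mav_{f(V_2)}\rw\mav_{f(V_1)}$ whose matrix is $x=(x_{i,j})$, and its natural complement in $\mav$ is $\mav_{f(V_1)}$; hence $\gr_f$ is exactly the locus of those rank-$n$ local direct summands of $\mav$ that are complementary to $\mav_{f(V_1)}$. Since every element of $P_{f,0}=\op{Stab}_G(\mav_{f(V_1)})$ preserves $\mav_{f(V_1)}$, it sends a submodule complementary to $\mav_{f(V_1)}$ to another such submodule, which gives at once the stability of $\gr_f$ under $P_{f,0}$. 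To obtain the explicit formulas I would first record the transformation rule for the coordinate $x$: writing $\mav=\mav_{f(V_1)}\oplus\mav_{f(V_2)}$, an element $g\in P_{f,0}$ is block upper triangular with diagonal blocks $A$ on $\mav_{f(V_1)}$ and $D$ on $\mav_{f(V_2)}$ and off-diagonal block $B:\mav_{f(V_2)}\rw\mav_{f(V_1)}$; pushing the graph of $x$ forward by $g$ shows that $g\cdot x$ is the graph of $(Ax+B)D^{-1}$, so that $g$ acts on $\mau_f$ by $x\mapsto(Ax+B)D^{-1}$.

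Next I would specialize this rule to the Levi subgroup. For $g\in L_{f,0}$ one has $B=0$, so $x\mapsto AxD^{-1}$. Because $L_{f,0}$ respects the type decomposition $\mav=\mav_1\oplus\mav_2$ in addition to $\mav_{f(V_1)}$ and $\mav_{f(V_2)}$, the blocks $A$ and $D$ are themselves diagonal for the type grading, with diagonal entries $g_{f(V_1)_1},g_{f(V_1)_2}$ and $g_{f(V_2)_1},g_{f(V_2)_2}$. Reading $x\mapsto AxD^{-1}$ block by block then yields the two displayed formulas. In particular each block is left- and right-multiplied by invertible matrices, so the simultaneous vanishing of the two same-type blocks $x_{f(V_1)_1,f(V_2)_1}$ and $x_{f(V_1)_2,f(V_2)_2}$ is preserved, proving that $\maf_f$ is stable under $L_{f,0}$.

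For the final assertion I would specialize the rule to $r\in R_u(P_{f,0})$, where $A$ and $D$ are the identity and the formula collapses to $x\mapsto x+B$. Here the key point is to identify $B$: since $P_{f,0}=\op{Stab}_{\op{GL}(\mav_1)}(\mav_{f(V_1)_1})\times\op{Stab}_{\op{GL}(\mav_2)}(\mav_{f(V_1)_2})$, its unipotent radical consists of the maps that are the identity on each $\mav_{f(V_1)_i}$ and send the complement $\mav_{f(V_2)_i}$ into $\mav_{f(V_1)_i}$; thus $B$ ranges exactly over the two same-type blocks $(f(V_1)_1,f(V_2)_1)$ and $(f(V_1)_2,f(V_2)_2)$ and vanishes on the crossing blocks. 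Since $\maf_f$ is cut out by the vanishing of these same-type blocks and carries the crossing blocks freely, the map $m_{f,0}(r,x)=x+B(r)$ simply reconstitutes a matrix of $\mau_f$ from its crossing blocks, taken from $x$, and its same-type blocks, taken from $r$. This is manifestly an isomorphism of affine $S$-schemes, the inverse being the reading-off of the two groups of blocks.

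The only delicate point is the interplay of the two gradings of $\mav$: the type grading $\mav_1\oplus\mav_2$ coming from $G$ and the grading $\mav_{f(V_1)}\oplus\mav_{f(V_2)}$ governing both the chart $\mau_f$ and the parabolic $P_{f,0}$. I expect the main obstacle to be purely a matter of bookkeeping, namely tracking which of the four blocks $(f(V_1)_a,f(V_2)_b)$ each entry of $x$ lies in, rather than any genuine geometric difficulty. Once $R_u(P_{f,0})$ is matched with the same-type blocks and $\maf_f$ with the crossing blocks, the isomorphism $m_{f,0}$ drops out of the additive rule $x\mapsto x+B$.
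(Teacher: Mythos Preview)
Your proof is correct and follows essentially the same route as the paper's own argument: you derive the single master formula $x\mapsto (Ax+B)D^{-1}$ for the action of the parabolic on the graph chart, then specialize it first to the Levi (obtaining the displayed formulas and the stability of $\maf_f$) and then to the unipotent radical (obtaining $x\mapsto x+B$ and the product decomposition). The paper does exactly this, working instead with the full stabilizer $P=\op{Stab}_{\op{GL}(\mav)}(\mav_{f(V_1)})$ before restricting to $P_{f,0}$, but the computations and the identification of which blocks carry $R_u(P_{f,0})$ versus $\maf_f$ are identical.
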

 \begin{proof}
The open subscheme $\gr_f$ of the grassmannian parametrizes the complementary submodules of $\mav_{f(V_1)}$ in $\mav$. It follows that it is stable under the action 
of the stabilizer $P$ of $\mav_{f(V_1)}$ in $\op{GL}(\mav)$ and therefore under the action of its subgroup $P_{f,0}$.
 
\medskip 
Let $x$ be a point of $\mau_f$ and $g$ a point of $P$.
By definition, the point $\iota_{f}(x)$ is the graph of $x$. Therefore,
the point $g\cdot\iota_{f,0}(x)$ is the module consisting  
of elements of type 
$$g(v+xv)=g_{f(V_2)}v+(g_{f(V_1),f(V_2)}+g_{f(V_1)}m)v,\quad v\in\mav_{f(V_2)}.$$
It is thus equal to the point 
$$\iota_{f,0}((g_{f(V_1),f(V_2)}+g_{f(V_1)}x)g_{f(V_2)}^{-1}).$$
In other words, the action of $P$ on $\mau_f$ is given by 
$$P\times \mau_f\rw \mau_f,\quad (g,x)\mapsto (g_{f(V_1),f(V_2)}+g_{f(V_1)}x)g_{f(V_2)}^{-1}.$$
By specializing this action to the subgroup $P_{f,0}$ of $P$, we immediately see that $\maf_f$ is left stable under the action of $L_{f,0}$ we obtain 
the formulas in the statement of the proposition.
\medskip
 
Moreover, still using the description of the action of $P$ on $\mau_f$ found above, we see that if $g$ is a point of $R_u(P_{f,0})$ and $x$ a point of 
$\mau_f$, then the point $x'=g\cdot x$ of 
$\mau_f$ is given by :

\begin{equation*}
\begin{cases} 
x'_{f(V_1)_1,f(V_2)_1}=g_{f(V_1)_1,f(V_2)_1}
\\
x'_{f(V_1)_1,f(V_2)_2}=x_{f(V_1)_1,f(V_2)_2}
\\
x'_{f(V_1)_2,f(V_2)_1}=x_{f(V_1)_2,f(V_2)_1}
\\
x'_{f(V_1)_2,f(V_2)_2}=g_{f(V_1)_2,f(V_2)_2}.
\end{cases}
\end{equation*}
This proves that the natural $P_{f,0}$-equivariant morphism : 
$$m_{f,0} : R_u(P_{f,0})\times \maf_{f,0}\rw \mau_f$$
is indeed an isomorphism.
 
\end{proof}

 \begin{defi}
Let $f\in R$ and $d\in[\![0,n]\!]$. We denote by $\mai_{f,0,d}$ the ideal sheaf on $\maf_{f}$ spanned by the minors of size $d$ of the matrix   
 $$\left(
\begin{array}{cccc}
0 & x_{f(V_1)_1,f(V_2)_2} \\
x_{f(V_1)_2,f(V_2)_1} & 0 \\
\end{array}
\right). $$
We denote by $\maz_{f,0,d}$ the closed subscheme of $\maf_{f}$ defined by the ideal sheaf $\mai_{f,0,d}$.
  
 \end{defi}

\begin{prop}\label{0ideal}
Let $f\in R$ and $d\in[\![0,n-1]\!]$. Through the isomorphism $$m_{f,0} : R_u(P_{f,0})\times \maf_{f,0}\rw \mau_f$$ of Proposition \ref{0formulas} the 
closed subscheme $\iota_f^{-1}(\maz_{0,d})$ is equal to $R_u(P_{f,0})\times \maz_{f,0,d}$.
\end{prop}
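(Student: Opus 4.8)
The plan is to reduce the statement to a determinantal computation on the slice $\maf_f$, exploiting the $R_u(P_{f,0})$-equivariance already established in Proposition \ref{0formulas}.

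First I would observe that $\maz_{0,d}=\maz_d$ is $G$-stable, hence stable under $R_u(P_{f,0})$; since $\gr_f\cong\mau_f$ is $P_{f,0}$-stable, the closed subscheme $\iota_f^{-1}(\maz_{0,d})$ of $\mau_f$ is $R_u(P_{f,0})$-stable. The isomorphism $m_{f,0}$ of Proposition \ref{0formulas} is $R_u(P_{f,0})$-equivariant for the left-translation action on the first factor. As this action is simply transitive with quotient $\maf_f$, and $R_u(P_{f,0})$ is an affine space, every $R_u(P_{f,0})$-stable closed subscheme of $R_u(P_{f,0})\times\maf_f$ is of the form $R_u(P_{f,0})\times W$ for a unique closed subscheme $W\subseteq\maf_f$, namely its trace on the slice $\{e\}\times\maf_f$. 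Both $m_{f,0}^{-1}(\iota_f^{-1}(\maz_{0,d}))$ and $R_u(P_{f,0})\times\maz_{f,0,d}$ have this shape, so it suffices to prove that their slices coincide, that is, $\iota_f^{-1}(\maz_{0,d})\cap\maf_f=\maz_{f,0,d}$ as closed subschemes of $\maf_f$.

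Next I would write down the matrix of $p$ in the chart. Using the spanning sections of $\taut_f$ from Proposition-Definition \ref{0open} and the splitting $p=\pi^*p_1\oplus\pi^*p_2$, and splitting all indices according to type, I set $A=x_{f(V_1)_1,f(V_2)_1}$, $B=x_{f(V_1)_1,f(V_2)_2}$, $C=x_{f(V_1)_2,f(V_2)_1}$ and $D=x_{f(V_1)_2,f(V_2)_2}$. A direct computation of $\pi^*p_1(e_j)$ and $\pi^*p_2(e_j)$ expresses $p$ as a $2n\times 2n$ matrix $M$ over $\mathcal{O}(\mau_f)$ whose nonzero blocks are $A,B,C,D$ together with two identity blocks coming from the terms $\pi_f^*v_j$, $j\in f(V_2)$. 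In the chart the components of $\wedge^{n+d}p$ are exactly the $(n+d)\times(n+d)$ minors of $M$, so $\iota_f^{-1}(\maz_{0,d})$ is defined by the ideal of $(n+d)$-minors of $M$, and its trace $W$ on $\maf_f$ is defined by the restriction of these minors, i.e. by the $(n+d)$-minors of $M|_{\maf_f}$, the matrix obtained by imposing $A=D=0$.

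Finally I would identify this ideal. On $\maf_f$, after permuting rows and columns (which does not change the ideal of minors), $M|_{\maf_f}$ takes the block form $\begin{pmatrix} I_n & 0\\ 0 & N\end{pmatrix}$ with $N=\begin{pmatrix}B & 0\\ 0 & C\end{pmatrix}$. By the standard lemma on the minors of a matrix containing a maximal identity block, the ideal of $(n+d)$-minors of $\begin{pmatrix} I_n & 0\\ 0 & N\end{pmatrix}$ equals the ideal generated by the $d$-minors of $N$. Since $N$ and $\begin{pmatrix}0 & B\\ C & 0\end{pmatrix}$ differ only by a permutation of the column blocks, they have the same $d$-minors up to sign, so this ideal is exactly $\mai_{f,0,d}$. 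Hence $W=\maz_{f,0,d}$, which completes the reduction and proves the proposition. The main obstacle is the explicit determination of $M$: one must keep careful track, for each spanning section $e_j$, of how $\pi^*p_1$ and $\pi^*p_2$ act according to the types of $j$ and of the summation indices, so that the two identity blocks and the blocks $A,B,C,D$ land in the correct positions; once $M|_{\maf_f}$ is pinned down, the passage to the $d$-minors of $\begin{pmatrix}0 & B\\ C & 0\end{pmatrix}$ is routine determinantal algebra.
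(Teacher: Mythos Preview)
Your argument is correct. The reduction via $R_u(P_{f,0})$-equivariance is sound: since $m_{f,0}$ is equivariant for the free action by left translation on the first factor, any invariant closed subscheme is the pullback of its trace on the slice $\{e\}\times\maf_f$ (this is descent along the trivial torsor $R_u(P_{f,0})\times\maf_f\to\maf_f$, or can be read off directly from the explicit formulas in the proof of Proposition~\ref{0formulas}, which show that $m_{f,0}$ is simply a relabeling of coordinates). The determinantal computation on the slice is then exactly the one in the paper.

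The paper's proof reaches the same matrix by a slightly different route: instead of invoking equivariance and then restricting to $\maf_f$, it works on all of $\mau_f$ but replaces the standard basis of $\pi_f^*\mav$ by the basis $\pi_f^*v_i$ ($i\in f(V_1)_1$), $\pi_f^*v_j+\sum_{i\in f(V_1)_1}x_{i,j}\pi_f^*v_i$ ($j\in f(V_2)_1$), and analogously for $\mav_2$. This choice absorbs your blocks $A$ and $D$ into the identity blocks, so that the matrix of $p$ on the whole chart already has the form you obtain only after setting $A=D=0$. In effect, the paper performs a unipotent change of basis on the target to kill the unipotent coordinates, whereas you use the group action on the source to move to the slice; the two operations are equivalent and yield the same ideal of minors.
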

\begin{proof}
Due to the formula in the proof of Proposition \ref{0formulas}, it suffices to show that the defining ideal of
$\iota_f^{-1}(\maz_{0,d})$ on $\mau_f$ is spanned by the minors of size $d$ of the matrix   
 $$\left(
\begin{array}{cccc}
0 & x_{f(V_1)_1,f(V_2)_2} \\
x_{f(V_1)_2,f(V_2)_1} & 0 \\
\end{array}
\right). $$
To prove this, we express the matrix of the homomorphism
$$\iota_f^*p : \taut_f^{\oplus 2}\rw \pi_f^*\mathcal{V}$$
in appropriate basis. We choose the basis of $\taut_f$ described in Proposition-Definition \ref{0open}.
This basis is indexed by the set $f(V_2)$, which is the disjoint union of $f(V_2)_1$ and $f(V_2)_2$. We also choose the basis 
$$\pi_f^*(v_i),\quad i\in f(V_1)_1,\quad \pi_f^*(v_j)+\sum_{i\in f(V_1)_1}x_{i,j}\pi_f^*(v_i),\quad j\in f(V_2)_1$$
for $\pi_f^*\mav_1$ and 
$$\pi_f^*(v_i),\quad i\in f(V_1)_2,\quad \pi_f^*(v_j)+\sum_{i\in f(V_1)_2}x_{i,j}\pi_f^*(v_i),\quad j\in f(V_2)_2$$
for $\pi_f^*\mav_2$. The matrix of $\iota_f^*p$ in these basis can be expressed in blocks as follows: 

$$\left(
\begin{array}{cccc}
0 & x_{f(V_1)_1,f(V_2)_2} & 0 & 0 \\
Id & 0 & 0 & 0 \\
0 & 0 & x_{f(V_1)_2,f(V_2)_1} & 0 \\
0 & 0 & 0 & Id \\

\end{array}
\right). $$
By definition, the defining ideal of $\iota_f^{-1}(\maz_{0,d})$ is generated by the minors of size $n+d$ of this matrix.  
By reordering the vector in the basis, we get the block diagonal square matrix with blocks $I_n$ and
 $$\left(
\begin{array}{cccc}
0 & x_{f(V_1)_1,f(V_2)_2} \\
x_{f(V_1)_2,f(V_2)_1} & 0 \\
\end{array}
\right). $$
We see therefore that the defining ideal of $\iota_f^{-1}(\maz_{0,d})$ is generated by the minors of size $d$ of the last matrix, as we wanted.

\end{proof}

\begin{defi}\label{bigdefi}
 Let $f\in R$, $m\in[\![0,n]\!]$ and $d\in[\![m,n]\!]$. 
\begin{itemize}
 \item We define a parabolic subgroup scheme $P_{f,m}$ of $G$ by induction
$m$. For $m$ equals $0$, we have 
already defined 
$P_{f,0}$.
Then, assuming that $P_{f,m-1}$ has been defined, we set
\begin{equation*}
P_{f,m}=
\begin{cases} 
\op{Stab}_{P_{f,m-1}}(\mav_{f(V_1^{>m})\cap V_1},\mav_{\{f(m,2)\}}) & \text{if }f(m,1)\in V_1\text{ and }f(m,2)\in V_2
\\
\op{Stab}_{P_{f,m-1}}(\mav_{f(V_1^{>m})\cap V_2},\mav_{\{f(m,2)\}}) & \text{if }f(m,1)\in V_2\text{ and }f(m,2)\in V_1.
\end{cases}
\end{equation*}
 
\item We denote by $L_{f,m}$ the following Levi subgroup of $P_{f,m}$:
$$\prod_{i=1}^k(\op{GL}(\mav_{f(i,1)})\times \op{GL}(\mav_{f(i,2)}))\times\prod_{i,j\in\{1,2\}}\op{GL}(\mav_{f(V_i^{>m})\cap V_j})$$
 
\item We denote by $\maf_{f,m}$ the affine space over $S$ on the indeterminates 
$x_{i,j}$ where $(i,j)$ runs over the union of the sets 
$$\{(f(1,1),f(1,2)),\ldots,(f(m,1),f(m,2))\}$$
and 
$$((f(V_1^{>m})_1)\times (f(V_2^{>m})_2))\cup ((f(V_1^{>m})_2)\times (f(V_2^{>m})_1)).$$ 
 
\item  We let the group scheme $L_{f,m}$ act on $\maf_{f,m}$ by the following formulas
\begin{equation*}
\begin{cases} 
x'_{f(1,1),f(1,2)}=g_{f(1,1)}g_{f(1,2)}^{-1}x_{f(1,1),f(1,2)}
\\
x'_{f(i,1),f(i,2)}=g_{f(i,1)}g_{f(i-1,2)}g_{f(i-1,1)}^{-1}g_{f(i,2)}^{-1}x_{f(i,1),f(i,2)} \text{ for }i\in[\![2,m]\!]
\\
x'_{f(V_1^{>m})_1,f(V_2^{>m})_2}=g_{f(m,1)}^{-1}g_{f(m,2)}g_{f(V_1^{>m})_1}x_{f(V_1^{>m})_1,f(V_2^{>m})_2}g_{f(V_2^{>m})_2}^{-1}
\\
x'_{f(V_1^{>m})_2,f(V_2^{>m})_1}=g_{f(m,1)}^{-1}g_{f(m,2)}g_{f(V_1^{>m})_2}x_{f(V_1^{>m})_2,f(V_2^{>m})_1}g_{f(V_2^{>m})_1}^{-1}

\end{cases}
\end{equation*}
where $g$ is a point of $L_{f,m}$, $x$ a point of $\maf_{f,m}$ and $x':=g\cdot x$.

\item We denote by $\mau_{f,m}$ the product $$R_u(P_{f,m})\times \maf_{f,m}$$ acted upon by the group scheme $P_{f,m}=R_u(P_{f,m})\rtimes L_{f,m}$
via the formula
$$\forall (r,l)\in P_{f,m},\quad \forall (r',x)\in\mau_{f,m},\quad (r,l)\cdot (r',x)=(rlr'l^{-1},l\cdot x).$$ 

\item  We denote by $\mai_{f,m,d}$ the ideal sheaf on $\maf_{f,m}$ spanned by the minors of size $d-m$ of the matrix   
 $$\left(
\begin{array}{cccc}
0 & x_{f(V_1^{> m})_1,f(V_2^{> m})_2} \\
x_{f(V_1^{> m})_2,f(V_2^{> m})_1} & 0 \\
\end{array}
\right). $$
 
\item  We denote by $\maz'_{f,m,d}$ the closed subscheme of $\maf_{f,m}$ defined by the ideal sheaf $\mai_{f,m,d}$ and by $\maz_{f,m,d}$ the closed subscheme 
$R_u(P_{f,m,d})\times \maz'_{f,m,d}$ of $\mau_{f,m}$.

\item We denote by $\mathcal{B}_{f,m}$ the blow-up of $\mau_{f,m}$ along the closed subscheme $\maz_{f,m,m+1}$.
 
\end{itemize}
 \end{defi}
Let $f\in R$, $m\in[\![1,n]\!]$ and $d\in[\![m,n]\!]$.
The blow-up $\mathcal{B}_{f,m-1}$ is the closed subscheme of  
$$\mau_{f,m-1}\times \op{Proj}(\mao_S[X_{i,j},(i,j)\in (f(V_1^{\geqslant m})_1\times f(V_2^{\geqslant m})_2) \sqcup (f(V_1^{\geqslant m})_2\times f(V_2^{\geqslant m})_1)])$$
defined by the equations 
$$\forall (i,j), (i',j')\in (f(V_1^{\geqslant m})_1\times f(V_2^{\geqslant m})_2) \sqcup (f(V_1^{\geqslant m})_2\times f(V_2^{\geqslant m})_1)$$
$$x_{i,j}X_{i',j'}-X_{i,j}x_{i',j'}=0.$$ 
 
\begin{prop}\label{key} 
 With these notations, the open subscheme 
$\{X_{f(m,1),f(m,2)}\neq 0\}$ 
of $\mathcal{B}_{f,m-1}$ is left stable under the action of $P_{f,m}$ and is isomorphic, as a $P_{f,m}$-scheme, to 
$\mau_{f,m}$. Moreover, via this isomorphism, the strict transform of $\maz_{f,m-1,d}$ in $\mau_{f,m}$ is $\maz_{f,m,d}$.
\end{prop}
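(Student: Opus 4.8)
The plan is to prove the statement by an explicit computation in the distinguished affine chart of the blow-up, reducing the whole question to a Schur-complement manipulation of the anti-block-diagonal matrix that cuts out the centers $\maz_{f,\bullet,d}$. Throughout, set $(a,b):=(f(m,1),f(m,2))$, let
$$W=(f(V_1^{\geqslant m})_1\times f(V_2^{\geqslant m})_2)\sqcup (f(V_1^{\geqslant m})_2\times f(V_2^{\geqslant m})_1)$$
be the index set of the homogeneous coordinates $X_{i,j}$, and treat the case $f(m,1)\in V_1$, $f(m,2)\in V_2$ (the other case being symmetric, exchanging the two anti-diagonal blocks). First I would note that, since the center $\maz_{f,m-1,m}=R_u(P_{f,m-1})\times\maz'_{f,m-1,m}$ is a product and $\mau_{f,m-1}=R_u(P_{f,m-1})\times\maf_{f,m-1}$, the blow-up $\mathcal{B}_{f,m-1}$ is $R_u(P_{f,m-1})$ times the blow-up of $\maf_{f,m-1}$ along $\maz'_{f,m-1,m}$. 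As $\mai_{f,m-1,m}$ is generated by the size-$1$ minors, i.e. all the entries, of
$$M:=\left(\begin{array}{cc}0 & x_{f(V_1^{\geqslant m})_1,f(V_2^{\geqslant m})_2}\\ x_{f(V_1^{\geqslant m})_2,f(V_2^{\geqslant m})_1} & 0\end{array}\right),$$
this center is the vanishing of the coordinates $x_{i,j}$, $(i,j)\in W$, so $\mathcal{B}_{f,m-1}$ is the standard blow-up of affine space along a coordinate subspace. In the chart $\{X_{a,b}\neq 0\}$ I set $t:=x_{a,b}$ and introduce the affine ratios $\xi_{i,j}:=X_{i,j}/X_{a,b}$; the blow-up relations $x_{i,j}X_{i',j'}=X_{i,j}x_{i',j'}$ then give $x_{i,j}=t\,\xi_{i,j}$ for all $(i,j)\in W$, exhibiting the chart as an affine space with coordinates the $R_u(P_{f,m-1})$-coordinates, the diagonal coordinates $x_{f(i,1),f(i,2)}$ for $i<m$, the distinguished $t$, and the ratios $\xi_{i,j}$ for $(i,j)\in W\setminus\{(a,b)\}$.

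Next I would construct the isomorphism with $\mau_{f,m}=R_u(P_{f,m})\times\maf_{f,m}$. The ratios split according to their indices: those $\xi_{a,j}$ (with $j\in f(V_2^{>m})_2$) and $\xi_{i,b}$ (with $i\in f(V_1^{>m})_1$), having exactly one index in $\{a,b\}$, account for the extra unipotent directions of $R_u(P_{f,m})$ relative to $R_u(P_{f,m-1})$ — consistent with $P_{f,m}$ being obtained from $P_{f,m-1}$ by splitting the lines $\mav_{\{f(m,1)\}}$ and $\mav_{\{f(m,2)\}}$ off the block — whereas the ratios with both indices in $V^{>m}$ provide the off-diagonal coordinates of $\maf_{f,m}$ after the triangular change of variables
$$x^{\mathrm{new}}_{i,j}=\xi_{i,j}-\xi_{i,b}\,\xi_{a,j}\quad(i\in f(V_1^{>m})_1,\ j\in f(V_2^{>m})_2),\qquad x^{\mathrm{new}}_{i,j}=\xi_{i,j}\quad(i\in f(V_1^{>m})_2,\ j\in f(V_2^{>m})_1).$$
Together with $t$ and the $x_{f(i,1),f(i,2)}$ ($i<m$) giving the diagonal coordinates of $\maf_{f,m}$, this triangular change of variables is an isomorphism of affine spaces identifying the chart with $\mau_{f,m}$.

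For the equivariance I would argue as follows. The action of $P_{f,m-1}$ on $\mau_{f,m-1}$ described in Proposition \ref{0formulas} lifts canonically to $\mathcal{B}_{f,m-1}$ because the center is $P_{f,m-1}$-stable; on the homogeneous coordinates $X_{i,j}$ the subgroup $P_{f,m}$ is, by its very definition as a further flag stabilizer, precisely the stabilizer of the line spanned by $X_{a,b}$, so the chart $\{X_{a,b}\neq 0\}$ is $P_{f,m}$-stable. Transporting the induced action through $t$, the $\xi_{i,j}$ and the $x^{\mathrm{new}}_{i,j}$, and comparing with the formulas of Definition \ref{bigdefi}, one recovers exactly the prescribed action of $P_{f,m}=R_u(P_{f,m})\rtimes L_{f,m}$ on $\mau_{f,m}$; this is a direct, if lengthy, check on the generators.

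Finally, the strict transform is computed by a Schur-complement identity, and this is where I expect the main difficulty. On the chart one has $M=t\,N$, where $N$ is $M$ with each entry $x_{i,j}$ replaced by $\xi_{i,j}$ and with the distinguished entry $\xi_{a,b}=1$ a unit. Hence $\mai_{f,m-1,d}$, generated by the size-$(d-m+1)$ minors of $M$, equals $t^{\,d-m+1}$ times the ideal of size-$(d-m+1)$ minors of $N$; the strict transform of $\maz'_{f,m-1,d}$ is the saturation of this with respect to the exceptional equation $t$, and since the minors of $N$ do not involve $t$ that saturation is exactly the ideal of size-$(d-m+1)$ minors of $N$. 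Using the unit entry $\xi_{a,b}$ as a pivot to clear its row and column — an operation preserving minor ideals — this ideal equals the ideal of size-$(d-m)$ minors of the Schur complement of the pivot, whose entries are precisely $\xi_{i,j}-\xi_{i,b}\xi_{a,j}$ on the first anti-block and $\xi_{i,j}$ on the second; under the change of variables of the second step this is exactly the matrix defining $\mai_{f,m,d}$. Thus the strict transform of $\maz'_{f,m-1,d}$ is $\maz'_{f,m,d}$, and tensoring with the untouched $R_u$-factor yields that the strict transform of $\maz_{f,m-1,d}$ is $\maz_{f,m,d}$. The hard part is to make the two determinantal computations mutually consistent: one must verify that dividing out the common power $t^{\,d-m+1}$ and pivoting on the unit entry produces neither a larger nor a smaller ideal than the minor ideal of the reduced matrix, and that the resulting Schur complement coincides with the matrix of $\mai_{f,m,d}$ only after the same triangular change of variables that simultaneously furnishes the isomorphism onto $\mau_{f,m}$.
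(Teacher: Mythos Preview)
Your proof is correct and follows essentially the same approach as the paper: both factor out $R_u(P_{f,m-1})$, work in the standard affine chart of the blow-up of $\maf_{f,m-1}$ along the coordinate subspace $\maz'_{f,m-1,m}$, identify it with $\mau_{f,m}$ via the same triangular change of variables, and compute the strict transform by the same pivot-and-reduce argument (what you call the Schur complement is exactly the paper's ``standard row and column operations'', and your saturation by $t$ is the paper's closure computation). The only differences are cosmetic---you use ratio coordinates $\xi_{i,j}$ and saturation language, while the paper parametrizes the chart by an explicit map $b:\maf_{f,m-1}\to\maf_{f,m-1}$ multiplying the relevant entries by $x_{f(m,1),f(m,2)}$ and writes out the $R_u(P)\times\maf_{f,m}\to\mau'_{f,m}$ isomorphism coordinate by coordinate---but the underlying computations coincide.
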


\begin{proof}
We prove analogous statement for the 
the blow-up $\mab'_{f,m-1}$ of $\maf_{f,m-1}$ along the closed subscheme $\maz'_{f,m-1,m}$ from which the proposition is easily derived. 

\medskip
The scheme $\mab'_{f,m-1}$ is the closed subscheme of  
$$\maf_{f,m-1}\times \op{Proj}(\mao_S[X_{i,j},(i,j)\in (f(V_1^{\geqslant m})_1\times f(V_2^{\geqslant m})_2) \sqcup (f(V_1^{\geqslant m})_2\times f(V_2^{\geqslant m})_1)])$$
defined by the equations 
$$\forall (i,j), (i',j')\in (f(V_1^{\geqslant m})_1\times f(V_2^{\geqslant m})_2) \sqcup (f(V_1^{\geqslant m})_2\times f(V_2^{\geqslant m})_1)$$
$$x_{i,j}X_{i',j'}-X_{i,j}x_{i',j'}=0.$$ 
Observe that the open subscheme $\mau'_{f,m}$ of $\mab'_{f,m-1}$ defined by $X_{f(m,1),f(m,2)}\neq 0$ is isomorphic, as a scheme over $\maf_{f,m-1}$ to 
\begin{equation*}
b : \maf_{f,m-1}\rw \maf_{f,m-1},\quad x_{i,j}\mapsto
\begin{cases} 
x_{f(m,1),f(m,2)}x_{i,j} &\text{if }(i,j)\in f(V_1^{>m})\times f(V_2^{>m}) \text{ and}
\\
 &\text{$i$ and $j$ have different types}
\\
x_{i,j} &\text{otherwise.}
\end{cases}
\end{equation*}
In the following we shall make use of this remark and use the coordinates $x_{i,j}$ to describe the points of $\mau'_{f,m}$.

\medskip

Observe that the center of the blow-up $\mab'_{f,m-1}\rw \maf_{f,m-1}$ is stable under the action of $L_{f,m-1}$ and therefore the group scheme $L_{f,m-1}$ acts on $\mab'_{f,m-1}$.  
This action can be described as follows.
$$\forall g\in L_{f,m-1},\quad \forall (x,X)\in \mab'_{f,m}, \quad (x',X')=g\cdot (x,X)\text{ where } x'=g\cdot x\text{ and} $$
\begin{equation*}
\begin{cases} 
X'_{f(V_1^{\geqslant m})_1,f(V_2^{\geqslant m})_2}=g_{f(m-1,1)}^{-1}g_{f(m-1,2)}g_{f(V_1^{\geqslant m})_1}X_{f(V_1^{\geqslant m})_1, f(V_2^{\geqslant m})_2}g_{f(V_2^{\geqslant m})_2}^{-1}
\\
X'_{f(V_1^{\geqslant m})_2,f(V_2^{\geqslant m})_1}=g_{f(m-1,1)}^{-1}g_{f(m-1,2)}g_{f(V_1^{\geqslant m})_2}X_{f(V_1^{\geqslant m})_2, f(V_2^{\geqslant m})_1}g_{f(V_2^{\geqslant m})_1}^{-1}
\end{cases}
\end{equation*}
where we denote by $X$ the matrix formed by the $X_{i,j}$. 
Observe now that the open subscheme $\mau_{f,m}'$ is the locus of points $(x,X)$ of $\mab'_{f,m-1}$ such that $v_{f(m,1)}^*(Xv_{f(m,2)})$ does not vanish.
Therefore, it is left stable under the action of the parabolic subgroup of $L_{f,m-1}$ 
\begin{equation*}
P=
\begin{cases} 
\op{Stab}_{L_{f,m-1}}(\mav_{f(V^{>m}_1)},\mav_{f(m,2)}) & \text{ if $f(m,1)$ belongs to $V_1$ and $f(m,2)$ to $V_2$}
\\
\op{Stab}_{L_{f,m-1}}(\mav_{f(V^{>m}_2)},\mav_{f(m,2)}) & \text{ if $f(m,1)$ belongs to $V_2$ and $f(m,2)$ to $V_1$}
\end{cases}
\end{equation*}

\medskip
The group scheme $L_{f,m}$ is a Levi subgroup scheme of $P$.
Let $g$ be a point of $L_{f,m}$ and $x$ a point of $\maf_{f,m}$. The point $x$ corresponds to the couple $(b(x),X)$ in $\mab'_{f,m-1}$, where 
\begin{equation*}
X_{i,j} =
\begin{cases} 
x_{i,j} &\text{ if }(i,j)\neq (f(m,1),f(m,2))
\\
1 &\text{ if }(i,j)=(f(m,1),f(m,2)).
\end{cases}
\end{equation*}
Let $(x',X')=g\cdot (x,X)$. By a quick computation we get
\begin{equation*}
\begin{cases} 
x'_{f(m,1),f(m,2)}= g_{f(m-1,1)}^{-1}g_{f(m-1,2)}g_{f(m,1)} g_{f(m,2)}^{-1} x_{f(m,1),f(m,2)}
\\
X'_{f(m,1),f(m,2)} = g_{f(m-1,1)}^{-1}g_{f(m-1,2)}g_{f(m,1)} g_{f(m,2)}^{-1} 
\\
X'_{f(V^{>m}_1)_1,f(V^{>m}_2)_2}=g_{f(m-1,1)}^{-1}g_{f(m-1,2)}g_{f(V^{>m}_1)_1}X_{f(V^{>m}_1)_1,f(V^{>m}_2)_2}g_{f(V^{>m}_2)_2}^{-1}
\\
X'_{f(V^{>m}_1)_2,f(V^{>m}_2)_1}=g_{f(m-1,1)}^{-1}g_{f(m-1,2)}g_{f(V^{>m}_1)_2}X_{f(V^{>m}_1)_2,f(V^{>m}_2)_1}g_{f(V^{>m}_2)_1}^{-1}
\\
X'_{i,j} =0 \text{ otherwise }
\end{cases}
\end{equation*}
Therefore we see that the closed subscheme $\maf_{f,m}$ of $\mau'_{f,m}$ is left stable under the action of the Levi subgroup $L_{f,m}$ and, moreover, the action of
$L_{f,m}$ on $\maf_{f,m}$ is given by the formulas in Definition \ref{bigdefi}. 
In a similar way, we prove that 
 the natural morphism 
$$R_u(P)\times \maf_{f,m}\rw \mau'_{f,m},\quad (g,x)\mapsto g\cdot x=x'$$ is an isomorphism, given by the following formulas
\begin{equation*}
\begin{cases} 
x'_{f(l,1),f(l,2)}= x_{f(l,1),f(l,2)} \text{ for all $l\in[\![1,m]\!]$}
\\
x'_{f(V^{>m}_1)_1,f(m,2)} = g_{f(V^{>m}_1)_1,f(m,1)}
\\
x'_{f(m,1),f(V^{>m}_2)_2} = -g_{f(m,2),f(V^{>m}_2)_2}
\\
x'_{f(V^{>m}_1)_1,f(V^{>m}_2)_2}=x_{f(V^{>m}_1)_1,f(V^{>m}_2)_2}-g_{f(V^{>m}_1)_1,f(m,1)}g_{f(m,2),f(V^{>m}_2)_2}
\\
x'_{f(V^{>m}_1)_2,f(V^{>m}_2)_1}=x_{f(V^{>m}_1)_2,f(V^{>m}_2)_1}

\end{cases}
\end{equation*}

\medskip
Now we compute the strict transform of $\maz'_{f,m,d}$ in $\mau'_{f,m}$.
By definition, the strict transform of $\maz'_{f,m,d}$ is the schematic closure of 
$$\maz:=b^{-1}(\maz'_{f,m-1,d}\cap \{x_{f(m,1),f(m,2)}\neq 0\})$$ 
in $\mau'_{f,m}$. Let $x$ be a point of $\mau'_{f,m}$. We denote $(r,y)$ its components through the isomorphism
$$R_u(P)\times \maf_{f,m}\rw \mau'_{f,m}.$$
By definition, the point $x$ belongs to $\maz$ if and only $x_{f(m,1),f(m,2)}$ is invertible and all the square $d-m+1$ minors extracted 
from the following matrix 
$$\left(
\begin{array}{cccc}
0 & b(x_{f(V_1^{\geqslant m})_1,f(V_2^{\geqslant m})_2}) \\
b(x_{f(V_1^{\geqslant m})_2,f(V_2^{\geqslant m})_1}) & 0 \\
\end{array}
\right) $$
are zero.
 By definition of the morphism $b$, each coefficient in this matrix is a multiple of $x_{f(m,1),f(m,2)}$ and the coefficient in place $(f(m,1),f(m,2))$ is exactly 
$x_{f(m,1),f(m,2)}$. As this indeterminate is invertible on the open subscheme $\{x_{f(m,1),f(m,2)}\neq 0\}$ we see that the point $x$ belongs to $\maz$ 
if and only $x_{f(m,1),f(m,2)}$ is invertible and all the minors of size $d-m+1$ extracted from the matrix 
$$\left(
\begin{array}{cccc}
0 & x'_{f(V_1^{\geqslant m})_1,f(V_2^{\geqslant m})_2}   \\
x'_{f(V_1^{\geqslant m})_2,f(V_2^{\geqslant m})_1} & 0 \\
\end{array}
\right) $$
 are zero, where $x'_{f(m,1),f(m,2)}=1$ and $x'_{i,j}=x_{i,j}$ otherwise. By operating standard row and column operations on this matrix we see now that $x$ belongs to $\maz$ 
 if and only if $x_{f(m,1),f(m,2)}$ is invertible and all the minors of size $d-m$ extracted from the matrix 
 $$\left(
\begin{array}{cccc}
0 & y_{f(V_1^{> m})_1,f(V_2^{> m})_2} \\
y_{f(V_1^{> m})_2,f(V_2^{> m})_1} & 0 \\
\end{array}
\right) $$
are zero.
This proves that $\maz$ is the intersection of $R_u(P)\times \maz'_{f,m,d}$ with the open subscheme $\{x_{f(m,1),f(m,2)}\neq 0\}$.
 
\medskip 
 
In order to complete the proof, we now show that the schematic closure of $\maz$ in 
$\mau'_{f,m}$ is equal to $R_u(P)\times \maz'_{f,m,d}$. 
We can check this Zariski locally on the base $S$ and therefore assume that $S$ is affine. First of all, it is obvious that the closure of 
$\maz$ is 
contained in $R_u(P)\times \maz'_{f,m,d}$. Conversely, let $\varphi$ be a global function on $\mau'_{f,m}$ which vanishes on $\maz$. 
This means that there exists an integer $q$ such
that $x_{f(m,1),f(m,2)}^q\varphi$ belongs to the ideal spanned by the minors of size $d-m$  
of the following matrix: 
$$\left(
\begin{array}{cccc}
0 & x_{f(V_1^{>m})_1,f(V_2^{>m})_2} \\
x_{f(V_1^{>m})_2,f(V_2^{>m})_1} & 0 \\
\end{array}
\right).$$
As the indeterminate $x_{f(m,1),f(m,2)}^q\varphi$ does not appear in this matrix, we can conclude that $\varphi$ itself belongs to this ideal, completing the proof of the 
proposition. 
\end{proof}

 \begin{prop}\label{atlas}
Let $f\in R$. There exists a unique collection of open immersions $\iota_{f,m}$, for $m$ from $1$ to $n$
such that each of the squares below are commutative :
$$\begin{tikzpicture}
\node (A) at (0,0) {$\gr_{0}$};
\node (B) at (0,2.7) {$\mau_{f,0}$};
\node (C) at (2.7,0) {$\gr_1$};
\node (D) at (2.7,2.7) {$\mau_{f,1}$};
\node (G) at (6.1,0) {$\gr_{n-1}$};
\node (H) at (6.1,2.7) {$\mau_{f,n-1}$};
\node (I) at (8.9,0) {$\gr_n$};
\node (J) at (8.9,2.7) {$\mau_{f,n}$};

\node at (4.5,0) {$.$};
\node at (4.4,0) {$.$};
\node at (4.3,0) {$.$};
\node at (4.5,2.7) {$.$};
\node at (4.4,2.7) {$.$};
\node at (4.3,2.7) {$.$};

\draw[->,>=latex] (B) to (A);
\draw[->,>=latex] (D) to (C);
\draw[->,>=latex] (H) to (G);
\draw[->,>=latex] (J) to (I);

\draw[->,>=latex] (C) to (A);
\draw[->,>=latex] (D) to (B);
\draw[->,>=latex] (I) to (G);
\draw[->,>=latex] (J) to (H);

\draw (G) to (4.6,0);
\draw[->,>=latex] (4.2,0) to (C);
\draw (H) to (4.6,2.7);
\draw[->,>=latex] (4.2,2.7) to (D);

\draw (-0.4,1.35) node[font=\small] {$\iota_{f,0}$};
\draw (3.1,1.35) node[font=\small] {$\iota_{f,1}$};
\draw (5.5,1.35) node[font=\small] {$\iota_{f,n-1}$};
\draw (9.35,1.35) node[font=\small] {$\iota_{f,n}$};

\draw (1.4,2.9) node[font=\small] {$b_{f,1}$};
\draw (1.4,-0.3) node[font=\small] {$b_{1}$};
 
\draw (7.5,2.9) node[font=\small] {$b_{f,n}$};
\draw (7.5,-0.3) node[font=\small] {$b_{n}$};

\end{tikzpicture}$$
The open immersion $\iota_{f,m}$ is equivariant for the action of $P_{f,m}$. We denote by $\gr_{f,m}$ the image of the open immersion $\iota_{f,m}$. 
The open subschemes $\gr_{f,m}$ cover $\gr_m$ as $f$ run over $R$.
\end{prop}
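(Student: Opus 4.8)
The plan is to argue by induction on $m$, proving the slightly stronger statement that for each $f\in R$ there is a unique open immersion $\iota_{f,m}\colon\mau_{f,m}\rw\gr_m$ making the $m$-th square commute, that $\iota_{f,m}$ is $P_{f,m}$-equivariant, and that $\iota_{f,m}^{-1}(\maz_{m,d})=\maz_{f,m,d}$ for every $d\in[\![m,n]\!]$. This last identity is exactly what lets the induction continue, since $\maz_{m,m+1}$ and $\maz_{f,m,m+1}$ are the centres of the blow-ups $b_{m+1}$ and $b_{f,m+1}$. The base case $m=0$ is supplied by the results already proved on $\gr_0=\gr$: Proposition-Definition~\ref{0open} gives the open immersion $\iota_{f,0}$ and the covering property, Proposition~\ref{0formulas} gives the $P_{f,0}$-equivariance, and Proposition~\ref{0ideal} gives $\iota_{f,0}^{-1}(\maz_{0,d})=\maz_{f,0,d}$.

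For the inductive step I would exploit that blow-ups are local on the base. By the inductive identity $\iota_{f,m-1}^{-1}(\maz_{m-1,m})=\maz_{f,m-1,m}$, the open subscheme $b_m^{-1}(\gr_{f,m-1})$ of $\gr_m$ is canonically the blow-up $\mab_{f,m-1}$ of $\mau_{f,m-1}$ along $\maz_{f,m-1,m}$; composing the resulting open immersion $\mab_{f,m-1}\hookrightarrow\gr_m$ with the open immersion $\mau_{f,m}\cong\{X_{f(m,1),f(m,2)}\neq 0\}\hookrightarrow\mab_{f,m-1}$ of Proposition~\ref{key} defines $\iota_{f,m}$. It is then automatically an open immersion and the $m$-th square commutes by construction; its $P_{f,m}$-equivariance follows from that of $\iota_{f,m-1}$ (the centre $\maz_{m-1,m}$ being $G$-stable, so that $\mab_{f,m-1}\to\gr_m$ is $P_{f,m-1}$-equivariant) together with the equivariance asserted in Proposition~\ref{key}, using $P_{f,m}\subseteq P_{f,m-1}$; and the identity $\iota_{f,m}^{-1}(\maz_{m,d})=\maz_{f,m,d}$ follows because the strict transform commutes with the open immersion, combined with the last assertion of Proposition~\ref{key}. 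Uniqueness I would get from the universal property of $b_m$: the composite $\iota_{f,m-1}\circ b_{f,m}$ sends the inverse image ideal of $\maz_{m-1,m}$ to an invertible ideal, because $b_{f,m}$ factors through $\mab_{f,m-1}$, so it lifts to $\gr_m$ in exactly one way.

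The remaining assertion, that the $\gr_{f,m}$ cover $\gr_m$, is the step I expect to be the real obstacle, since it needs a genuine combinatorial input rather than a formal property of blow-ups. The crucial preliminary is that $\iota_{f,m-1}$ depends on $f$ only through the ordered prefix $(f(1),\ldots,f(m-1))$ and the \emph{set} $f(V_1^{\geqslant m})$: tracing the construction, it uses only the initial chart $\iota_{f,0}$, which by Proposition-Definition~\ref{0open} depends solely on the set $f(V_1)$, together with the chart choices at positions $1,\ldots,m-1$. Granting this, I would argue inductively: a point $P\in\gr_m$ has image $b_m(P)$ in some $\gr_{g,m-1}$, so $P$ lies in $b_m^{-1}(\gr_{g,m-1})\cong\mab_{g,m-1}$ and hence in one of the standard charts $\{X_{i,j}\neq 0\}$ with $(i,j)$ in the index set $(g(V_1^{\geqslant m})_1\times g(V_2^{\geqslant m})_2)\sqcup(g(V_1^{\geqslant m})_2\times g(V_2^{\geqslant m})_1)$. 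For each such $(i,j)$ I would produce an $f\in R$ sharing the prefix of $g$, with $(f(m,1),f(m,2))=(i,j)$ and $f(V_1^{\geqslant m})=g(V_1^{\geqslant m})$; by the dependence observation this forces $\iota_{f,m-1}=\iota_{g,m-1}$, whence $\mab_{f,m-1}=\mab_{g,m-1}$ with the same embedding into $\gr_m$ and the chart $\{X_{i,j}\neq 0\}=\{X_{f(m,1),f(m,2)}\neq 0\}$ is exactly $\gr_{f,m}$, so that $P\in\gr_{f,m}$. Producing such an $f$ reduces to distributing the $2(n-m)$ elements of $V^{\geqslant m}\setminus\{i,j\}$ over the positions $m+1,\ldots,n$ so that the two entries of each position have opposite types; this is possible because deleting the opposite-type pair $\{i,j\}$ from $V^{\geqslant m}$ leaves equally many elements of each type, so a short counting (Hall-type) argument matches the residual type-$1$ and type-$2$ elements of $g(V_1^{\geqslant m})\setminus\{i\}$ against those of $g(V_2^{\geqslant m})\setminus\{j\}$. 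Ranging over $(i,j)$ covers $\mab_{g,m-1}=b_m^{-1}(\gr_{g,m-1})$, and ranging over $g$ then covers $\gr_m$, closing the induction.
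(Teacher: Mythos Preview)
Your proof is correct and follows essentially the same route as the paper: build $\iota_{f,m}$ by induction via Proposition~\ref{key} and the locality of blow-ups, and prove the covering by noting that $\iota_{f,m-1}$ depends only on the prefix of $f$ and on the set $f(V_1)$, so that the standard charts $\{X_{i,j}\neq 0\}$ of $\mab_{f,m-1}$ are exactly the $\mau_{f',m}$ for the $f'\in R$ sharing that data. Your combinatorial matching argument for producing such $f'$ is correct.

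The only genuine difference is the uniqueness step: the paper argues via density of $\Omega$ in $\gr_{f,m}$ (two morphisms to the separated scheme $\gr_m$ agreeing on a dense open coincide), whereas you invoke the universal property of the blow-up $b_m$. Both work; the paper's argument is marginally slicker since it avoids checking that the relevant ideal becomes invertible, but yours has the virtue of not relying on knowing that $\Omega$ is schematically dense in each chart.
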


\begin{proof}
The existence and $P_{f,m}$-equivariance of the $\iota_{f,m}$ follows directly from Proposition \ref{key}.
The uniqueness comes from the fact that for each index $m$, the intersection $\Omega\cap \gr_{f,m}$ is dense in $\gr_{f,m}$.
The last assertion is proved as follows. By Proposition \ref{0open}, the open subschemes $\gr_{f,0}$ cover the scheme $\gr_0$. 
Moreover, it follows from Proposition \ref{key} that the blow-up $\mab_{f,m-1}$ is covered by the open subschemes $\mau_{f',m}$ where 
$f'$ runs over the elements of $R$ having the same restriction as $f$ to $[\![1,k]\!]\times \{1,2\}$ and satisfying 
$f(V_1)=f'(V_1)$ and $f(V_2)=f'(V_2)$.  
\end{proof}

\subsection{An alternative construction}\label{alternative}
In this section we provide an alternative construction for the schemes $\gr_m$.
Recall from Section \ref{defofcomp} that the section $\wedge^{n+d}p$ of the locally free module $\mah_d$ does not vanish on $\Omega$.
Therefore it defines an invertible submodule of $\mah_d$ on $\Omega$ that is locally a direct summand. In other words, it defines a morphism from
$\Omega$ to the projective bundle $\mathbb{P}(\mah_d)$ over $\Omega$. 
\begin{defi}
Let $d\in[\![0,n]\!]$. We denote by $\varphi_d$ the morphism  
$$\varphi_d : \Omega\rw \mathbb{P}(\mah_d).$$ 
defined by the global section $\wedge^{n+d} p$ of $\mah_d$.
\end{defi}

\begin{prop}\label{extend}
Let $d\in[\![0,n]\!]$ and $m\in[\![d,n]\!]$. The morphism $\varphi_d$ extends to $\gr_m$ in a unique way.
\end{prop}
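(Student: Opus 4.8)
The plan is to deduce uniqueness from a density argument, and existence by showing that the successive blow-ups turn the base ideal of the section $\wedge^{n+d}p$ into an invertible ideal, so that the rational map $\varphi_d$ loses all its indeterminacy.

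For uniqueness, I would use that $\gr_m\rw S$ is smooth (Theorem~\ref{comp}) with geometrically integral fibres --- being built from the grassmannian by the blow-ups $b_1,\dots,b_m$ --- so that the dense open $\Omega$ is schematically dense in $\gr_m$. As $\mathbb{P}(\mah_d)$ is separated over $S$, the locus where two extensions of $\varphi_d$ coincide is closed and contains $\Omega$, hence is all of $\gr_m$.

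For existence I would work in the charts of Proposition~\ref{atlas}. Fix $f\in R$. Over $\mau_{f,0}=\mau_f$ the module $\mah_d$ is free, since $\taut_f$ and $\pi_f^*\mav$ are free, and the computation in the proof of Proposition~\ref{0ideal} shows that the components of $\wedge^{n+d}p$ in this trivialisation are, up to the identity blocks, the $d\times d$ minors generating $\mai_{f,0,d}$; thus the base ideal of $\varphi_d$ over $\mau_f$ is $\mai_{f,0,d}$. Pulling back along the tower $\mau_{f,m}\rw\cdots\rw\mau_{f,0}$ provided by Proposition~\ref{key}, the base ideal over $\mau_{f,m}$ becomes the total transform of $\mai_{f,0,d}$. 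If this total transform is invertible, then $\wedge^{n+d}p$ factors, locally, as a generator of that ideal times a section that is fibrewise nonzero, hence generating a line subbundle; this produces the extension of $\varphi_d$ over $\mau_{f,m}$, and the argument is local on the source, so valid over an arbitrary $S$.

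The heart of the matter is to compute this total transform, and I would do it by induction on $m$, showing that over $\mau_{f,m}$ it equals $J_{f,m}\cdot\mai_{f,m,d}$, where $J_{f,m}$ is the invertible monomial ideal generated by a suitable product of powers of the parameters $x_{f(1,1),f(1,2)},\dots,x_{f(m,1),f(m,2)}$ cutting out the exceptional divisors. The inductive step is exactly the minor manipulation already performed in the proof of Proposition~\ref{key}: under $b_m$ each entry of the relevant matrix acquires a factor $x_{f(m,1),f(m,2)}$, the entry in position $(f(m,1),f(m,2))$ becoming this parameter itself, so that a minor of size $d-m+1$ equals $x_{f(m,1),f(m,2)}^{\,d-m+1}$ times a minor whose ideal, after the standard row and column reduction against the unit pivot, is the ideal $\mai_{f,m,d}$ of minors of size $d-m$. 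Granting this, for $m\geqslant d$ the generators of $\mai_{f,m,d}$ are minors of size $d-m\leqslant 0$, so $\mai_{f,m,d}$ is the unit ideal and the total transform equals the invertible ideal $J_{f,m}$. Hence $\varphi_d$ extends over each $\mau_{f,m}$, and by the uniqueness already established these local extensions glue to a morphism on $\gr_m$. The main obstacle is precisely this inductive minor identity; it is essentially contained in Proposition~\ref{key}, where the strict transform of $\maz_{f,m-1,d}$ is identified with $\maz_{f,m,d}$, the only extra bookkeeping being the exceptional factor that is discarded in passing to the strict transform.
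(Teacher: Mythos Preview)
Your argument is correct and follows essentially the same route as the paper's: both work in the charts $\mau_{f,m}$, pull back the section $\wedge^{n+d}p$ through the blow-ups, and show that its base ideal becomes the principal monomial ideal generated by $x_{f(1,1),f(1,2)}^{d}\cdots x_{f(d,1),f(d,2)}$, so that dividing by this monomial yields the extension. The only cosmetic difference is that the paper first reduces to $m=d$ and then does the computation in one shot via the composite $c=b_{f,1}\circ\cdots\circ b_{f,d}$, explicitly exhibiting a single minor equal to the monomial, whereas you organise the same computation as an induction on $m$ through the factorisation of the total transform as $J_{f,m}\cdot\mai_{f,m,d}$.
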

\begin{proof}
We first observe that $\Omega$ is dense in each of the schemes $\gr_m$. If the morphism $\varphi_d$ extends to $\gr_m$ it is therefore in a unique
way. Moreover, it suffices to show that $\varphi_d$ extends to $\gr_d$, because then the composite  
$$\begin{tikzpicture}
\node (A) at (-1,0) {$\gr_{m}$};  
\node (B) at (2,0) {$\gr_{m-1}$};
\node (C) at (5,0) {$\ldots$};
\node (D) at (7.5,0) {$\gr_d$};
\node (E) at (10.5,0) {$\mathbb{P}(\mah_d)$};
\draw[->,>=latex] (A) to (B);
\draw[->,>=latex] (B) to (C);
\draw[->,>=latex] (C) to (D);
\draw[->,>=latex] (D) to (E);

\draw (0.5,0.3) node[font=\small] {$b_{m}$};
\draw (8.75,0.2) node[font=\small] {$\varphi_d$};
\end{tikzpicture}$$
is an extension of $\varphi_d$ to $\gr_m$.

\medskip

By the same density argument as above it suffices to show that the morphism $\varphi_d$ extends from $\Omega\cap \gr_{f,d}$ to $\gr_{f,d}$ for each element 
$f$ of the set $R$. We identify the scheme $\gr_{f,d}$ with $\mau_{f,d}$ via the isomorphism $\iota_{f,d}$.
Observe that the scheme $\mathbb{P}(\mah_d)$ is equipped with an action of the group scheme $G$ and the morphism $\varphi_d$ is equivariant 
with respect to this action. By using this remark, we see that it suffices to extend the morphism $\varphi_d$ from 
$\Omega\cap\maf_{f,d}$ to $\maf_{f,d}$. 

\medskip
We denote by $c$ the composite $b_{f,1}\circ\cdots\circ b_{f,d}$. We use the trivialization 
of $\mah_{d}$ on $\gr_{f,0}$ as in the proof of Proposition \ref{0ideal}. For this trivialization, the coordinates are indexed by the product 
$(V_2\sqcup V_2)\times V$. The morphism $\varphi_d$ is given over $\Omega\cap \maf_{f,d}$ in this coordinates by the $n+d$ minors of the matrix 
$$\left(
\begin{array}{cccc}
0 & c(x)_{f(V_1)_1,f(V_2)_2} & 0 & 0 \\
Id & 0 & 0 & 0 \\
0 & 0 & c(x)_{f(V_1)_2,f(V_2)_1} & 0 \\
0 & 0 & 0 & Id \\

\end{array}
\right). $$
It follows from the definition of $c$ that all these minors are multiples of  
$$x_{f(1,1),f(1,2)}^d x_{f(1,2),f(2,2)}^{d-1}\ldots x_{f(d,1),f(d,2)}$$
and that one of them, namely the one indexed by the product of 
$$f(V_1)_2\sqcup f(V_2^{\leqslant d})_2\sqcup (f(V_1^{\leqslant d})_2)\sqcup f(V_2)_2$$
and
$$f(V_1^{\leqslant d})_1\sqcup f(V_2)_1\sqcup f(V_1^{\leqslant d})_2 \sqcup f(V_2)_2$$
is exactly equal to this product or its opposite. By dividing each coordinate by this product we therefore extend the morphism 
$\varphi_d$ to $\maf_{f,d}$.
\end{proof}

\begin{prop}
Let $m\in[\![0,n]\!]$. The morphism 
$$\psi_m:=\varphi_{0}\times \varphi_{1}\times ...\times \varphi_{m} : \gr_m\rw \mathbb{P}(\mathcal{H}_{0})\times_{\gr}\mathbb{P}(\mathcal{H}_{1})\times_{\gr}...\times_{\gr}\mathbb{P}(\mathcal{H}_{m})$$
is a closed immersion.  
\end{prop}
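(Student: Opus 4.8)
The plan is to show that $\psi_m$ is proper and a monomorphism; since a proper monomorphism is a closed immersion, this suffices. Properness is immediate: by Theorem \ref{comp} the scheme $\gr_m$ is projective, hence proper, over $S$, while the target $\mathbb{P}(\mah_0)\times_{\gr}\cdots\times_{\gr}\mathbb{P}(\mah_m)$ is a fibre product of projective bundles over $\gr$ and is therefore separated over $S$; any $S$-morphism from a proper $S$-scheme to a separated $S$-scheme is proper (factor through the graph, which is a closed immersion, followed by a base change of $\gr_m\rw S$). So everything reduces to proving that $\psi_m$ is a monomorphism, and for this I would use the criterion that a morphism is a monomorphism if and only if it is unramified and universally injective, equivalently its diagonal is an isomorphism.

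Both properties can be checked on the atlas of Proposition \ref{atlas}, so I would fix $f\in R$ and work on the chart $\gr_{f,m}\cong\mau_{f,m}=R_u(P_{f,m})\times\maf_{f,m}$. The key input is the coordinate description of each component $\varphi_d$ obtained in the proof of Proposition \ref{extend}: in the trivialisation of $\mah_d$ used there, $\varphi_d$ is given by the $(n+d)$-minors of an explicit block matrix, all divisible by a fixed monomial in the blow-up coordinates $x_{f(i,1),f(i,2)}$, one of the minors being exactly that monomial. Dividing by this distinguished minor places $\varphi_d$ in a standard affine chart of $\mathbb{P}(\mah_d)$ and expresses its other homogeneous coordinates as regular functions on $\mau_{f,m}$. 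I would then exhibit, among the ratios of minors coming from $\varphi_0,\dots,\varphi_m$, expressions equal to each coordinate function of $\mau_{f,m}$: the coordinates along $R_u(P_{f,m})$ and the antidiagonal coordinates $x_{f(V_1^{>m})_\ast,f(V_2^{>m})_\ast}$ are already recovered by $\varphi_0$ (on the chart $\gr_f$ the relevant $x_{i,j}$ appear as size-one minors against the identity blocks, so $\varphi_0$ returns the full coordinate matrix), while the blown-up coordinates $x_{f(i,1),f(i,2)}$ for $i\le m$ are recovered by the higher $\varphi_d$. Once every coordinate is written as the pullback of a regular function on the target, the comorphism is locally surjective, so $\psi_m$ is injective on points with trivial residue extensions (universally injective) and its cotangent map is surjective (unramified), which is exactly what is needed.

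The main obstacle is precisely this bookkeeping of minors: one must select, for each coordinate $x_{i,j}$ of the chart, a single $(n+d)$-minor whose ratio with the distinguished one returns $x_{i,j}$, and verify that the choices made for the various $d$ are mutually compatible. A convenient way to organise the argument would be an induction on $m$: assuming $\psi_{m-1}$ is a closed immersion, factor $\psi_m$ as $(\psi_{m-1}\times\op{id})\circ(b_m,\varphi_m)$, where the first map is a base change of $\psi_{m-1}$ and hence a closed immersion, so that one is reduced to showing that $(b_m,\varphi_m)\colon\gr_m\rw\gr_{m-1}\times_{\gr}\mathbb{P}(\mah_m)$ is a closed immersion. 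Since $\gr_m$ is the blow-up of $\gr_{m-1}$ along $\maz_{m-1,m}$ and $\varphi_m$ is attached to the section $\wedge^{n+m}p$, Proposition \ref{key} identifies this morphism chart-locally with the closure of the graph of the rational map $\varphi_m$, which is by construction a closed subscheme of the product; the only delicate point is then the identification of the base scheme of $\varphi_m$ with $\maz_{m-1,m}$, namely that after the preceding blow-ups the higher minors impose no condition beyond the entries (size-one minors) that cut out $\maz_{m-1,m}$.
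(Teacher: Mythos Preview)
Your overall strategy—reduce to checking that $\psi_m$ is a monomorphism (it is proper), organise this by induction on $m$, and carry out the inductive step on the charts of Proposition~\ref{atlas}—is exactly the paper's approach. The gap is in your attribution of which $\varphi_d$ recovers which chart coordinate.

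You claim that $\varphi_0$ already returns the antidiagonal coordinates $x_{f(V_1^{>m})_\ast,f(V_2^{>m})_\ast}$ on $\mau_{f,m}$. This is not so. On $\mau_{f,m}$ the morphism $\varphi_0$ factors through the composite blow-down $c=b_{f,1}\circ\cdots\circ b_{f,m}$ to $\mau_{f,0}$, and $c$ sends each such coordinate $x_{i,j}$ to the product $x_{f(1,1),f(1,2)}\cdots x_{f(m,1),f(m,2)}\cdot x_{i,j}$. On the exceptional locus, where some $x_{f(l,1),f(l,2)}$ vanishes, $\varphi_0$ therefore cannot separate points with different antidiagonal coordinates; the same remark applies to the part of $R_u(P_{f,m})$ created at the later blow-up steps. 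The paper instead uses $\varphi_m$ for these coordinates: after dividing all $(n+m)$-minors of the displayed block matrix by the distinguished one, a carefully chosen minor equals $\pm x_{i,j}$ on the nose, with no blow-up factors attached. So in the inductive step it is $\varphi_m$, not $\varphi_0$, that does the new work; the induction hypothesis (that $\psi_{m-1}$ is a closed immersion) supplies everything else by first pinning down the image in $\gr_{m-1}$.

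Your alternative via the graph closure of $\varphi_m$ is a legitimate reformulation, but the ``delicate point'' you flag—that the base ideal of $\varphi_m$ on $\gr_{m-1}$ coincides with $\mathcal{I}_{m-1,m}$—amounts to showing that the pullback of $\mathcal{I}_{0,m}$ along $b_1\circ\cdots\circ b_{m-1}$ factors as an invertible ideal times $\mathcal{I}_{m-1,m}$. Verifying this factorisation on each chart is essentially the same minor computation the paper performs directly, so the abstract phrasing does not bypass the work. One further point your outline does not address: to reduce the monomorphism check to a single chart one must first know that two $T$-points $p_1,p_2$ with equal image satisfy $p_1^{-1}(\gr_{f,m})=p_2^{-1}(\gr_{f,m})$. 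The paper handles this by showing that $\gr_{f,m}$ is cut out inside $\mab_{f,m-1}$ as the nonvanishing locus of a specific (normalised) homogeneous coordinate of $\varphi_m$, hence is determined by $\psi_m$.
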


\begin{proof}
 
We prove this by induction on $m$. For $m=0$ the morphism $\varphi_0$ is defined by the nowhere vanishing section $\wedge^n p$ of $\mah_0$ 
and is therefore a closed immersion. We suppose now that $\psi_{m-1}$ is a closed immersion and prove that $\psi_m$ is also a closed immersion.
As this morphism is proper, it suffices to check that it is a monomorphism  
in order to prove that it is a closed immersion. 
Let $p_1$ and $p_2$ be two points of $\gr_m$ which are mapped to the same point by $\psi_m$. We want to show that they are equal.
\medskip

First, we suppose that $p_1$ and $p_2$ are points of $\gr_{f,m}$, where $f$ is an element of $R$. We identify $\gr_{f,m}$ and $\mau_{f,m}$  
via the isomorphism $\iota_{f,m}$. We use the notations introduced in the proof of Proposition \ref{key}.
The scheme $\mau_{f,m}$ is isomorphic to the product $R_u(P_{f,m-1})\times \mau'_{f,m}$. Using the induction hypothesis, we can assume that 
$p_1$ and $p_2$ are actually points of $\mau'_{f,m}$. 
Viewed as points of $\mau'_{f,m}$ (which is isomorphic to $\maf_{f,m-1}$ via the morphism $b$) we denote the coordinates of $p_1$ by 
$(x_{i,j,1})$ and and the coordinates of $p_2$ by $(x_{i,j,2})$. Still by the induction hypothesis, we have 
$x_{i,j,1}=x_{i,j,2}$
for $(i,j)$ in the following set 
$$\{(f(1,1),f(1,2)),\ldots, (f(m,1),f(m,2))\}$$ 
as it follows from the definition of the morphism $b$. Observe now that the morphism $\varphi_m$ is defined over $\mau'_{f,m}$ by exactly the same process as explained 
in the proof of Proposition \ref{extend}. By this we mean that the coordinates of $\varphi_d$ are obtained by computing the $n+d$ minors of the matrix  
$$\left(
\begin{array}{cccc}
0 & c(x)_{f(V_1)_1,f(V_2)_2} & 0 & 0 \\
Id & 0 & 0 & 0 \\
0 & 0 & c(x)_{f(V_1)_2,f(V_2)_1} & 0 \\
0 & 0 & 0 & Id \\

\end{array}
\right)$$
and dividing by the product 
$$x_{f(1,1),f(1,2)}^m x_{f(1,2),f(2,2)}^{m-1}\ldots x_{f(m,1),f(m,2)}.$$
Indeed this process makes sense over $\mau'_{f,m}$ and extend $\varphi_d$ over $\Omega\cap \mau'_{f,m}$. Moreover, such an extension is unique.
Let $(i,j)$ be an element of 
$$(f(V_1^{\geqslant m})_1\times f(V_2^{\geqslant m})_2)\sqcup (f(V_1^{\geqslant m})_2\times f(V_2^{\geqslant m})_1)$$
different from $(f(m,1),f(m,2))$. We suppose that $i$ is of type $1$ and $j$ of type $2$, the other case being entirely similar.
The coordinate of $\varphi_m$ corresponding to the minor indexed by the product of 

$$f(V_1)_2\sqcup f(V_2^{< d})_2\sqcup\{j\}\sqcup (f(V_1^{\leqslant d})_2)\sqcup f(V_2)_2$$
and
$$f(V_1^{\leqslant d})_1\sqcup f(V_2)_1\sqcup f(V_1^{< d})_2\sqcup \{i\} \sqcup f(V_2)_2.$$
is $x_{i,j}$ or its opposite. We can therefore conclude that $x_{i,j,1}=x_{i,j,2}$. Finally we have proved that $p_1=p_2$.

\medskip
We go back to the general case. Let $f$ be an element of $R$. We prove now that the open subschemes $p_1^{-1}(\gr_{f,m})$ and $p_2^{-1}(\gr_{f,m})$ are equal.
This is sufficient to complete the proof of the proposition.
By the induction hypothesis, we already know that the open subschemes $p_1^{-1}(\gr_{f,m-1})$ and $p_2^{-1}(\gr_{f,m-1})$ are equal.
Observe now that the computations above actually prove the following: through $\varphi_m$, the non zero locus on $\mab_{f,m}$
of the coordinate of $\mathbb{P}(\mah_m)$ corresponding to the minor indexed by the product of 
$$f(V_1)_2\sqcup f(V_2^{\leqslant m})_2\sqcup (f(V_1^{\leqslant m})_2)\sqcup f(V_2)_2$$
and
$$f(V_1^{\leqslant m})_1\sqcup f(V_2)_1\sqcup f(V_1^{\leqslant m})_2 \sqcup f(V_2)_2$$
is $\mau_{f,m}$. This implies the result. 

\end{proof}

\subsection{The colored fan of $\gr_n$}\label{colored}

In this section we assume that the base scheme $S$ is the spectrum of an algebraically closed field $k$ of arbitrary characteristic.
The scheme $\gr_n$ is an equivariant compactification of the homogeneous space $\op{Iso}(\mav_2,\mav_1)$ under the action of the group 
$\op{GL}(\mav_1)\times \op{GL}(\mav_2)$. Through the fixed trivializations of the free modules $\mav_1$ and $\mav_2$ we see that $\gr_n$ is an
equivariant compactification of the general linear group $\op{GL}(n)$ under the action of $\op{GL}(n)\times\op{GL}(n)$. The aim of this section is to compute 
the colored fan of this compactification, as explained in Section \ref{example}.

\medskip

But first, we say a word about the blow-up procedure explained in Section \ref{defofcomp}  in this setting.
By definition, the set $\maz_d(k)$ is the set of $n$-dimensional subspaces of $\mav(k)$ such that 
the sum of the ranks of $p_1(k)$ and $p_2(k)$ is less than $n+d$ at every point. Another way to state this is that $\maz_d(k)$ is the set 
$$\{F\in \gr(k),\quad \op{dim}(F\cap \mav_1(k))+\op{dim}(F\cap \mav_2(k))\geqslant n-d\}.$$
For example, the set $\maz_0(k)$ is the set of $n$-dimensional subspaces of $\mav(k)$
which are direct sum of a subspace of $\mav_1(k)$ and a subspace of $\mav_2(k)$.
Using this description, it is not difficult to prove that $\maz_0(k)$ is the union of the closed orbits of $G(k)$ in $\gr(k)$. We leave it as an exercise to the reader 
to prove that, for $d$ from $1$ to $n-1$, an orbit $\omega$ of $G(k)$ in $\gr(k)$ is contained in $\maz_d(k)$ if and only if
its closure is the union of $\omega$ and some orbits contained in $\maz_{d-1}(k)$.

\medskip
We use the notations introduced in Section \ref{example}.
We choose for $T$ the diagonal torus in $\op{GL}(n)$ and for $B$ the Borel subgroup of upper triangular matrices.
The torus $T$ is naturally isomorphic to the torus $\mathbb{G}_m^n$.
The vector space $V$ is therefore naturally isomorphic to $\mathbb{Q}^n$. 
The Weyl chamber $\mathcal{W}$ corresponding to the chosen Borel subgroup $B$ of $\op{GL}(n)$ is given by
$$\mathcal{W}=\{(a_1,\ldots,a_n)\in V,\quad a_1\geqslant a_2 \geqslant  \ldots \geqslant a_n\}.$$

\begin{propdef}
We denote by $Q$ the set of permutations $g$ of $[\![1,n]\!]$ such that
$$\exists m\in [\![0,n]\!],\quad g_{|g^{-1}([\![1,m]\!])}\text{ is decreasing and }g_{|g^{-1}([\![m+1,n]\!])}\text{ is increasing.}$$
If such an integer $m$ exists, it is unique. We call it the integer associated to $g$ and denote it by $m_g$. 
Also, we denote by $\varepsilon_g$ the function 
\begin{equation*}
\varepsilon_g : [\![1,n]\!] \rw \{+1,-1\}, \quad x\mapsto 
\begin{cases} 
-1 &\text{if }g(x)\in [\![1,m_g]\!]
\\
\\
1 &\text{if }g(x)\in [\![m_g+1,n]\!]
\end{cases}
\end{equation*}
Finally, we denote by $C_g$ the following cone in $V$ : 
 $$C_g:=\{(a_1,\ldots,a_n)\in V,\quad 0\leqslant \varepsilon_g(1)a_{g(1)}\leqslant \cdots\leqslant \varepsilon_g(n)a_{g(n)}\}$$
\end{propdef}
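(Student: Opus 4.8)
The plan is to translate the condition defining $Q$ into a statement about a single sequence and then to locate its ``valley''. For each value $j\in[\![1,n]\!]$ write $P_j:=g^{-1}(j)$ for the position at which $g$ takes the value $j$. Unwinding the definitions, the set $g^{-1}([\![1,m]\!])$ is exactly $\{P_1,\dots,P_m\}$, and the restriction $g_{|g^{-1}([\![1,m]\!])}$ is decreasing precisely when the largest low value sits leftmost, i.e. when $P_1>P_2>\cdots>P_m$; likewise $g_{|g^{-1}([\![m+1,n]\!])}$ is increasing precisely when $P_{m+1}<P_{m+2}<\cdots<P_n$. Thus $g\in Q$ with associated integer $m$ if and only if the sequence $(P_1,\dots,P_n)$ is strictly decreasing up to index $m$ and strictly increasing afterwards, i.e. it is \emph{unimodal} with a single valley. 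First I would record this reformulation, because it turns the interleaving of two subsequences of $g$ into the much more rigid condition that one sequence be valley-shaped.

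Next I would show that any two admissible integers differ by at most one. Suppose $m<m'$ both work for the same $g$, and assume for contradiction that $m'\geq m+2$. Then there exist values $v,w$ with $m<v<w\leq m'$. Applying the increasing condition attached to $m$ to the two high values $v,w$ gives $P_v<P_w$, while applying the decreasing condition attached to $m'$ to the two low values $v,w$ gives $P_v>P_w$; this is absurd. Hence $m'\leq m+1$, so the admissible integers form a subset of a pair $\{m,m+1\}$ of consecutive integers. Since $g$ is a permutation the $P_j$ are pairwise distinct, so $(P_j)$ has a \emph{unique} minimal term, and the valley of the sequence is thereby pinned down to a single index.

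The step I expect to be the genuine obstacle is excluding the remaining possibility $m'=m+1$, that is, deciding to which side the bottom of the valley belongs. The index $j_0$ realizing $\min_j P_j$ can be read either as the last term of the decreasing part (yielding one value of $m$) or as the first term of the increasing part (yielding $m+1$), and a one-element run is formally both increasing and decreasing, so the unimodality argument alone does not separate these two candidates. To obtain a genuinely unique $m_g$ one must therefore break this tie by a convention carried in the definition, and the proof should make it explicit and check its consistency (for instance, stipulating that the value attaining $\min_j P_j$ is always assigned to the low part, so that $m_g$ is the index $j_0$ at which $P_j$ is minimal). Once such a convention is fixed, uniqueness of $m_g$ follows immediately from the unimodality established above, and $\varepsilon_g$ together with the cone $C_g$ are then well defined; I would flag this boundary case as the only delicate point of the argument.
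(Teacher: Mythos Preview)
The paper offers no proof of this Proposition--Definition; it simply asserts uniqueness and moves on. Your reformulation in terms of the sequence $P_j=g^{-1}(j)$ is correct, and your argument that any two admissible integers differ by at most one is clean and correct.

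Your instinct that the case $m'=m+1$ is the delicate point is right, but the conclusion you should draw is stronger than you suggest: uniqueness is \emph{false} as stated, and no convention internal to the definition resolves it. Indeed, if $g\in Q$ and $j_0$ is the index at which $(P_j)$ attains its minimum (so $P_{j_0}=1$, since $g$ is a permutation), then both $m=j_0-1$ and $m=j_0$ satisfy the condition: the singleton run at the valley can be attached to either side. For instance, when $n=1$ the identity admits both $m=0$ and $m=1$; when $n=2$ the identity admits both $m=0$ and $m=1$, and the transposition admits both $m=1$ and $m=2$. In general every $g\in Q$ admits exactly two values of $m$, and the two choices give genuinely different cones $C_g$.

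This is not merely a cosmetic ambiguity: the paper actually needs both cones. In the proof of the next proposition the author covers $-\mathcal{W}$ by the cones $C_g$; already for $n=1$ one needs the two half-lines $\{0\le a_1\}$ and $\{0\le -a_1\}$, while $|Q|=1$. So the correct reading is that the maximal cones of the fan are indexed not by $g\in Q$ but by pairs $(g,m)$ with $g\in Q$ and $m$ an admissible splitting integer for $g$; equivalently, by the $2^n$ signed permutations $(g,\varepsilon)$ with $g\in Q$ and $\varepsilon$ a sign vector compatible with $g$. Your analysis is the right one; the defect lies in the paper's statement, not in your argument.
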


\begin{prop}\label{fangln}
The compactification $\gr_n$ of $\op{GL}(n)$ is log homogeneous and its colored fan consists of the cones $C_g$ and their faces, where $g$ runs over the 
set $Q$.
\end{prop}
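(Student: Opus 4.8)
The plan is to treat the two assertions in turn, both resting on the local product description furnished by the atlas of Proposition \ref{atlas}. First I would establish log homogeneity by reducing it, via Section \ref{example} and Theorem \ref{affaiblissement}, to colorlessness; then I would compute the fan by applying the recipe of Section \ref{example} to the closure of the left torus, reading off the maximal cones from the weights that appear in Definition \ref{bigdefi}.

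For log homogeneity, recall from Section \ref{example} that $\op{GL}(n)$ under $\op{GL}(n)\times\op{GL}(n)$ is separably spherical, so Theorem \ref{affaiblissement} reduces the claim to showing that $\gr_n$ is a smooth colorless compactification with separable closed orbits; smoothness is Theorem \ref{comp}. By Proposition \ref{atlas} each chart is an isomorphism $\mau_{f,n}\cong R_u(P_{f,n})\times \maf_{f,n}$ with $\maf_{f,n}$ an affine space on which the Levi $L_{f,n}$ acts through the formulas of Definition \ref{bigdefi}. Since $L_{f,n}$ is a product of copies of $\op{GL}(\mav_{f(i,j)})=\mathbb{G}_m$, it is a torus, so $[L_{f,n},L_{f,n}]$ is trivial and $\maf_{f,n}$ is a smooth affine toric variety with a fixed point for a quotient of $L_{f,n}$. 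This is exactly the local model of condition $(3)$ in Theorem \ref{structurelocale}. Applying that theorem to each open set $X_{\omega,G}$ attached to a closed orbit $\omega$ (the charts cover $\gr_n$, hence meet every closed orbit, which is a flag variety and so separable as in Section \ref{example}) shows that $\gr_n$ is colorless with separable closed orbits, and Theorem \ref{affaiblissement} then yields log homogeneity.

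For the fan I would let the left factor $T=T_1\subset\op{GL}(\mav_1)$ act and describe the fan of the toric variety $\overline{T_1}$, keeping the cones inside $-\mathcal{W}$. The maximal cones come from the charts $\maf_{f,n}$. Writing $\chi_a$ for the character giving the action on the line $\mav_a$, Definition \ref{bigdefi} shows that the coordinate $x_{f(i,1),f(i,2)}$ is an eigenvector of weight $\lambda_1=w_1$ and $\lambda_i=w_i-w_{i-1}$ for $i\ge 2$, where $w_i=\chi_{f(i,1)}-\chi_{f(i,2)}$. Restricting to $T_1$ annihilates the type-$2$ characters, so $w_i|_{T_1}=\varepsilon_i\chi_{c_i}$, where $c_i\in V_1$ is the type-$1$ member of $\{f(i,1),f(i,2)\}$ and $\varepsilon_i=+1$ or $-1$ according as $f(i,1)$ or $f(i,2)$ has type $1$. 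Identifying $\chi_{(k,1)}$ with the $k$-th coordinate on $V=\mathbb{Q}^n$, the smooth affine toric chart $\maf_{f,n}$ contributes (with the sign convention making the left action land in $-\mathcal{W}$) the cone
$$\sigma_f=\{(a_1,\ldots,a_n)\in V \ :\ 0\le \varepsilon_1 a_{k_1}\le \varepsilon_2 a_{k_2}\le\cdots\le \varepsilon_n a_{k_n}\},$$
where $k_i$ is the first coordinate of $c_i$. This is precisely the shape of the cones $C_g$ for the permutation $g_f:i\mapsto k_i$ and the signs $\varepsilon_i$.

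It then remains to match these cones with $Q$. Substituting $t_i=\varepsilon_i a_{k_i}$ and letting $(t_i)$ run over $0\le t_1\le\cdots\le t_n$, one checks that $\sigma_f\subseteq -\mathcal{W}$, i.e. that $a_1\le\cdots\le a_n$ throughout $\sigma_f$, forces the indices with $\varepsilon_i=-1$ to be sent by $g_f$ onto an initial block $[\![1,m]\!]$, with $g_f^{-1}$ decreasing there and increasing on the complementary block; these are exactly the defining conditions of $Q$, and they pin down $\varepsilon_i=\varepsilon_{g_f}(i)$, whence $\sigma_f=C_{g_f}$ with $g_f\in Q$. Conversely, every $g\in Q$ is realized by the chart with $\{f(i,1),f(i,2)\}=\{(g(i),1),(g(i),2)\}$ ordered by $\varepsilon_g(i)$, which is the chart whose open left-$T_1$-orbit is $T_1\cdot e$. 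Finally I would check that the $C_g$, $g\in Q$, cover $-\mathcal{W}$ with disjoint interiors; since the colored fan of the complete colorless embedding $\gr_n$ has support $-\mathcal{W}$ and consists of the cones of $\overline{T_1}$ contained in $-\mathcal{W}$, this tiling forces its maximal cones to be exactly the $C_g$, $g\in Q$, together with their faces. The geometric inputs are essentially immediate; the main obstacle is this combinatorial step — fixing the weight signs so the inequalities describe $-\mathcal{W}$ rather than $\mathcal{W}$, verifying that the sign vector $(\varepsilon_i)$ read off from $f$ must coincide with $\varepsilon_{g_f}$, and proving that the $C_g$ tile $-\mathcal{W}$ so that no finer subdivision can occur.
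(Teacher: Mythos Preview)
Your route to log homogeneity is different from the paper's and has a gap. You invoke condition (3) of Theorem~\ref{structurelocale} to deduce colorlessness, but that condition demands more than the product decomposition $R_u(P_{f,n})\times\maf_{f,n}$ with $\maf_{f,n}$ toric: it requires the open set to coincide with the attractor $X_0=\{y:x\in\overline{By}\}$ and, crucially for the implication $(3)\Rightarrow(1)$, that each $G$-orbit in $X_{\omega,G}$ meet $Z$ in a single $T$-orbit. You check neither, and without the orbit correspondence one cannot conclude that a $B$-stable boundary divisor containing $\omega$ is $G$-stable, which is exactly how $(3)\Rightarrow(1)$ is proved. The paper bypasses this detour entirely and verifies log homogeneity directly from Definition~\ref{defilh}: chasing the composite $b_{f,1}\circ\cdots\circ b_{f,n}$ shows that the complement of $\Omega$ in $\maf_{f,n}$ is the union of the coordinate hyperplanes $x_{f(d,1),f(d,2)}=0$, so the boundary is strict normal crossing; since $\maf_{f,n}$ is a smooth affine toric variety for a quotient of the torus $L_{f,n}=T\times T$, it is log homogeneous under $L_{f,n}$, hence $\mau_{f,n}$ is log homogeneous under $P_{f,n}$, and the charts glue. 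This is shorter and does not require knowing the $G$-orbit structure in advance.

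Your fan computation is essentially the paper's, run in the opposite order. You go from a chart $f$ to a cone $\sigma_f$ via the $T$-weights and then characterize which $\sigma_f$ lie in $-\mathcal{W}$; the paper starts from $g\in Q$, builds an $f$ with $f(d,1)=(g(d),1)$ or $f(d,2)=(g(d),1)$ according to $\varepsilon_g(d)$, observes that for this choice $\maf_{f,n}$ is an affine open piece of $\overline{T}$, and reads off that the cone of one-parameter subgroups with a limit in $\maf_{f,n}$ is $C_g$. Both then finish by checking that the $C_g$ cover $-\mathcal{W}$, which together with each $C_g$ already being a cone of the fan forces them to be the maximal cones.
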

\begin{proof}

Let us first prove that the compactification $\gr_n$ is log homogeneous. Let $f$ be an element of $R$. We claim that the complement 
of $\Omega$ in $\maf_{f,n}$ is the union of the coordinate hyperplanes $x_{f(d,1),f(d,2)}=0$, where $d$ runs from $1$ to $n$. Indeed, a point $x$ of 
$\maf_{f,n}$ belongs to $\Omega$ if and only if the point $x'=(b_{f,n}\circ \cdots\circ b_{f,1})(x)$ belongs to $\Omega$. Moreover, we have 
\begin{equation*}
\begin{cases} 
x'_{f(d,1),f(d,2)}=x_{f(1,1),f(1,2)}\ldots x_{f(d,1),f(d,2)} \text{ for all $d$}\in [\![1,n]\!]
\\
x'_{i,j}=0 \text{ if }(i,j)\notin \{(f(1,1),f(1,2)),\ldots, (f(n,1),f(n,2))\}.
\end{cases}
\end{equation*}
By Proposition \ref{0ideal}, the point $x'$ belongs to $\Omega$ if and only if the product 
$$x'_{f(1,1),f(1,2)}\ldots x'_{f(n,1),f(n,2)}=x^n_{f(1,1),f(1,2)}x^{n-1}_{f(2,1),f(2,2)}\ldots x_{f(n,1),f(n,2)}$$ does not vanish, that is, if and only if 
each of the $x_{f(d,1),f(d,2)}$ does not vanish. This proves the claim. In particular, the complement of $\Omega$ in $\gr_n$ is a strict normal crossing divisor.
By Definition \ref{bigdefi}, the variety $\maf_{f,n}$ is a 
smooth toric variety 
for a quotient of the torus $T\times T=L_{f,n}$. From this we see that it is log homogeneous. It is now straightforward to check that
the $P_{f,n}$-variety $\mau_{f,n}$ is log homogeneous. It readily follows that the $G$-variety $\gr_n$ is log homogeneous. 

\medskip
We let $T$ acts on $\gr_n$ on the left. By Section \ref{example}, the closure $\overline{T}$ of $T$ in $\gr_n$ is a toric variety under the action of $T$ and we 
can use the fan of this toric variety to compute the colored fan of $\gr_n$.
We shall now identify some of the cones in the fan of $\overline{T}$. 
Let $g$ be an element of $Q$. We fix an element $f$ of $R$ such that, for each integer $d$ from $1$ to $n$, 
$f(d,1)=(g(d),1)$ if $\varepsilon_g(d)=1$ and $f(d,2)=(g(d),1)$ if $\varepsilon_g(d)=-1$
The variety $\maf_{f,n}$ is an open affine toric subvariety of $\overline{T}$. It corresponds to a cone in the fan of $\overline{T}$, 
namely the cone spanned by the one-parameter subgroups having a limit in $\maf_{f,n}$ at $0$. 
Let 
$$\lambda : \mathbb{G}_m\rw T,\quad t\rw (t^{a_1},\ldots,t^{a_n})$$
be a one-parameter subgroup of $T$. By the formulas in Definition \ref{bigdefi}, we see that the one-parameter subgroup 
$\lambda$ has a limit in $\maf_{f,n}$ at $0$ if and only if 
$$0\leqslant \varepsilon_g(1) a_{g(1)}\leqslant \ldots \leqslant \varepsilon_g(n) a_{g(n)}$$
that is, if and only if $\lambda$ belongs to $C_g$. This proves that, for each element $g$ of $Q$, the cone $C_g$ belongs to the fan
of the toric variety $\overline{T}$.

\medskip
To complete the proof, we show now that the cone $-\mathcal{W}$ is equal to the union of the cones $C_g$, where $g$ runs over the set $Q$. 
Let $g$ be an element of $Q$ and let $(a_1,\ldots,a_n)$ be a point in $C_g$. Let also $i$ be an integer 
between $1$ and $n-1$. If $i<m_g$, then there are two integers $j>j'$ such that $g(j)=i$ and $g(j')=i+1$. These integers satisfy $\varepsilon(j)=-1$ and 
$\varepsilon(j')=-1$. By definition of the cone $C_g$, we have $\varepsilon(j')a_{g(j')}\leqslant \varepsilon(j)a_{g(j)}$, that is, $a_i\leqslant a_{i+1}$.
The same kind of argument prove that $a_i\leqslant a_{i+1}$ for $i= m_g$ and for $i>m_g$.  This proves that the cone $C_g$ is contained in the cone $-\mathcal{W}$. 
We consider now an element $(a_1,\ldots,a_n)$ of $-\mathcal{W}$. By definition it satisfies $a_1\leqslant a_2 \leqslant  \ldots \leqslant a_n$. 
 Let $m$ be an integer such that $a_m\leqslant 0$ and $a_{m+1}\geqslant 0$. The rational numbers $-a_1,\ldots, -a_m$ and $a_{m+1},\ldots, a_n$ are nonnegative. 
 By ordering them in increasing order, we construct an element $g$ of $Q$ such that the point $(a_1,\ldots,a_n)$ belongs to 
 $C_g$.

\end{proof}

\subsection{Fixed points}
In this section, the scheme $S$ is the spectrum of an algebraically closed field $k$ of characteristic not $2$. 
We apply the results obtained in Section \ref{fixed} for some involutions on the log homogeneous compactification $\gr_n$ of $\op{GL}(n)$.
We denote by $J_r$ the antidiagonal square matrix 
of size $r$ with all coefficients equal to one on the antidiagonal.

\medskip

Let $b$ be a nondegenerate symmetric or antisymmetric bilinear form on $k^n$. Via the fixed trivializations of $\mav_1$ and $\mav_2$ we   
obtain nondegenerate symmetric or antisymmetric bilinear forms $b_1$ and $b_2$ on the $k$-vector spaces $\mav_1(k)$ and $\mav_2(k)$.
We equip the direct sum $\mav(k)$ of $\mav_1(k)$ and $\mav_2(k)$ with the nondegenerate symmetric or antisymmetric bilinear form 
$b_1\oplus b_2$. We let $\si$ be the involution of $\gr$ mapping a $n$-dimensional $k$-vector subspace $F$ of $\mav(k)$ to its orthogonal. 
It is an easy exercise to check that 
$$\op{dim}(F^{\perp}\cap \mav_1(k))=\op{dim}(F\cap \mav_2(k)) \text{ and }\op{dim}(F^{\perp}\cap \mav_2(k))=\op{dim}(F\cap \mav_1(k)).$$
By the description of $\maz_d$ given in Section \ref{colored}, we see that the involution $\si$ leaves each of the closed subvarieties $\maz_d$ of $\gr$ invariant.
Therefore it extends to an involution, still denoted $\si$, of each of the varieties $\gr_m$. To prove that we are in the setting of Section \ref{fixed}, it remains 
to observe that there is an involution $\si$ of $\op{GL}(n)$, namely the one associated to $b$, such that 
$$\forall g\in \op{GL}(n)\times \op{GL}(n),\quad \forall x\in \gr_m,\quad \si((g_1,g_2)\cdot x)=\si(g_1)\cdot\si(x)\cdot\si(g_2)^{-1}.$$
As in Section \ref{example}, we denote by $G'$ the neutral component of $G^{\si}$ and by $\gr_n'$ the connected component of $\gr_n^{\si}$ containing $G'$.

\medskip

$\textbf{The odd orthogonal case}.$
We suppose that $n=2r+1$ is odd. We let $b$ be the scalar product with respect to the matrix $J_{2r+1}$.
We have $G':=\op{SO}(2r+1)$.  
We let $T'$ be the intersection of $T$ with $G'$ and $B'$ the intersection of $B$ with $G'$.
The maximal torus $T'$ is naturally isomorphic to the split torus $\mathbb{G}_m^r$ via the following morphism
$$\mathbb{G}_m^r\rw T',\quad (t_1,\ldots, t_r)\mapsto \op{diag}(t_1,\ldots,t_r,1,t_r^{-1},\ldots,t_1^{-1}).$$
The space $V'$ is therefore naturally isomorphic to $\mathbb{Q}^r$. It is contained in $V$ via the following linear map 
$$V'\rw V,\quad (a'_1,\ldots,a'_r,0,-a'_r,\ldots,-a'_1).$$
The Weyl chamber with respect to $B'$ is given by  
$$\mathcal{W}'=\{(a'_1,\ldots,a'_r)\in V,\quad a'_1\geqslant a'_2 \geqslant  \ldots \geqslant a'_r\geqslant 0\}.$$

\begin{prop}\label{oddorth}
The compactification $\gr_n'$ of $G'$ is the wonderful compactification.  
\end{prop}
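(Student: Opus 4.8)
The plan is to prove that $\gr_n'$ is the wonderful compactification of $G' = \op{SO}(2r+1)$ by computing its colored fan and checking that it is the fan of the canonical (wonderful) compactification. Since $\gr_n$ is log homogeneous by Proposition \ref{fangln}, and $\si$ has order $2$ which is prime to the characteristic (as we assume $\op{char}(k) \neq 2$), Proposition \ref{fixedpoints} guarantees that $\gr_n'$ is already a log homogeneous compactification of $G'$ under the action of $G'$. By the classification recalled in Section \ref{example}, log homogeneous compactifications of a reductive group correspond bijectively to smooth fans supported on the anti-dominant chamber $-\mathcal{W}'$, so it remains to identify the fan of $\gr_n'$ and recognize it as the wonderful one.

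The key computational step is to relate the fan of $\gr_n'$ in $V'$ to the fan of $\gr_n$ in $V$ computed in Proposition \ref{fangln}. I would let $T'$ act on the left on $\gr_n'$ and consider the closure $\overline{T'}$; since $\gr_n'$ sits inside the fixed-point locus $\gr_n^{\si}$, the toric variety $\overline{T'}$ is obtained by intersecting $\overline{T}$ with the fixed locus, and correspondingly its fan is the trace on $V'$ of the fan $\{C_g\}_{g \in Q}$ under the inclusion $V' \hookrightarrow V$, $(a_1',\ldots,a_r') \mapsto (a_1',\ldots,a_r',0,-a_r',\ldots,-a_1')$. Concretely, I would take a point $(a_1',\ldots,a_r') \in \mathcal{W}'$, form its image in $V$, and determine which cone $C_g$ it lands in; the defining inequalities $0 \leqslant \varepsilon_g(1)a_{g(1)} \leqslant \cdots \leqslant \varepsilon_g(n)a_{g(n)}$ pull back to inequalities among the $a_i'$ and their negatives. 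The symmetry $a_{n+1-i} = -a_i'$ (with $a_{r+1}=0$) forces the permutation $g$ to pair indices symmetrically, and the essential observation will be that the maximal cones of the induced fan on $-\mathcal{W}'$ are exactly the Weyl chambers of the restricted root system, with walls given by the hyperplanes $a_i' = a_{i+1}'$ and $a_r' = 0$.

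The decisive point is that the wonderful compactification of an adjoint semisimple group has a fan whose support is $-\mathcal{W}'$ and whose maximal cones are precisely the images under the Weyl group of the anti-dominant chamber, i.e. the fan is the full Weyl fan restricted to $-\mathcal{W}'$ — equivalently, the fan generated by the anti-dominant chamber subdivided only by the walls coming from the simple roots. I would verify that the induced fan on $-\mathcal{W}'$ coincides with this: the one-parameter subgroups $\lambda$ whose left limit in $\gr_n'$ exists at $0$ are exactly those lying in $-\mathcal{W}'$, and the decomposition into cones matches the decomposition of $-\mathcal{W}'$ by the walls of the Weyl fan of type $B_r$. Since $\op{SO}(2r+1)$ is of adjoint type, its canonical compactification is smooth, hence wonderful, and is the unique log homogeneous compactification whose fan is this minimal one (a single maximal cone, namely $-\mathcal{W}'$ itself, with its faces). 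The main obstacle I anticipate is the bookkeeping in the restriction of the cones $C_g$: one must check carefully that after imposing the antisymmetry relations the resulting fan on $V'$ is exactly the fan with the single maximal cone $-\mathcal{W}'$ (and its faces), since it is this minimality — one closed orbit — that characterizes the wonderful compactification among all log homogeneous ones.
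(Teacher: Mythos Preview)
Your approach is the paper's: invoke Proposition~\ref{fixedpoints} for log homogeneity, then compute the colored fan of $\gr_n'$ as the trace on $V'$ of the fan of $\gr_n$ from Proposition~\ref{fangln}. However, your plan to analyze the traces of all the cones $C_g$ is unnecessarily laborious, and the middle of your proposal is muddled: the wonderful fan is \emph{not} ``subdivided by the walls of the Weyl fan'' --- it is the single cone $-\mathcal{W}'$ together with its faces, which you do state correctly at the end.

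The paper short-circuits the bookkeeping you were bracing for. It exhibits a \emph{single} cone from the fan of $\overline{T}$, namely
\[
\{(a_1,\ldots,a_{2r+1})\in V : 0\leqslant a_{r+1} \leqslant -a_r \leqslant a_{r+2} \leqslant \cdots \leqslant -a_1 \leqslant a_{2r+1}\},
\]
and observes that its trace on $V'$ (under $(a'_1,\ldots,a'_r)\mapsto(a'_1,\ldots,a'_r,0,-a'_r,\ldots,-a'_1)$) is exactly $-\mathcal{W}'$. Hence $-\mathcal{W}'$ itself is a cone of the colored fan of $\gr_n'$. But any fan with support $-\mathcal{W}'$ that contains $-\mathcal{W}'$ as one of its cones can only be $\{-\mathcal{W}'\text{ and its faces}\}$, since distinct cones of a fan meet only along faces. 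This one observation replaces the entire case analysis.
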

\begin{proof}
First of all, by Proposition \ref{fixedpoints}, the compactification $\gr'_n$ is log homogeneous. As explained in Section \ref{example}, we use the closure of $T'$ 
in $\gr'_n$ to compute the colored fan of $\gr'_n$. Observe that $T'$ is a subtorus of $T$ and therefore the fan of the toric variety $\overline{T'}$ 
is the trace on $V'$ of the fan
of the toric variety $\overline{T}$. By Proposition \ref{fangln}, the cone 
$$\{(a_1,\ldots,a_{2r+1})\in V, \quad 0\leqslant a_{r+1} \leqslant -a_r \leqslant a_{r+2} \leqslant  \cdots \leqslant -a_1 \leqslant a_{2r+1}\}$$
belongs to the fan of $\overline{T}$. The trace of this cone on $V'$ is $-\mathcal{W}'$, proving that the cone $-\mathcal{W}'$ belongs to the colored fan of $\gr_n'$.
But the only fan in $V'$ with support $-\mathcal{W}'$ containing $-\mathcal{W}'$ is the fan formed by $-\mathcal{W}'$ and its faces. This completes the proof. 
 
\end{proof}

\medskip

$\textbf{The even orthogonal case}.$
We suppose that $n=2r$ is odd. We let $b$ be the scalar product with respect to the matrix $J_{2r}$.
We have $G':=\op{SO}(2r)$.  
We let $T'$ be the intersection of $T$ with $G'$ and $B'$ the intersection of $B$ with $G'$.
The maximal torus $T'$ is naturally isomorphic to the split torus $\mathbb{G}_m^r$ via the following morphism
$$\mathbb{G}_m^r\rw T',\quad (t_1,\ldots, t_r)\mapsto \op{diag}(t_1,\ldots,t_r,t_r^{-1},\ldots,t_1^{-1}).$$
The space $V'$ is therefore naturally isomorphic to $\mathbb{Q}^r$. It is contained in $V$ via the following linear map 
$$V'\rw V,\quad (a'_1,\ldots,a'_r,-a'_r,\ldots,-a'_1).$$
The Weyl chamber with respect to $B'$ is given by  
$$\mathcal{W}'=\{(a'_1,\ldots,a'_r)\in V,\quad a'_1\geqslant a'_2 \geqslant  \ldots \geqslant a'_{r-1}\geqslant |a'_r|\}.$$

\begin{prop}\label{evenorth}
The compactification $\gr_n'$ of $G'$ is log homogeneous and its fan consists of the cones 
$$C_{+}:=\{(a'_1,\ldots,a'_r)\in V,\quad a'_1\leqslant a'_2 \leqslant  \ldots \leqslant a'_{r-1}\leqslant a'_r\leqslant 0\}$$
and 
$$C_{-}:=\{(a'_1,\ldots,a'_r)\in V,\quad a'_1\leqslant a'_2 \leqslant  \ldots \leqslant a'_{r-1}\leqslant -a'_r\leqslant 0\}$$
and their faces.  
\end{prop}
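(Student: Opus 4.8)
The plan is to transcribe the proof of Proposition \ref{oddorth}, the one new feature being that the chamber $-\mathcal{W}'$ is now a union of \emph{two} cones. First, $\si$ has order $2$, which is prime to the characteristic of $k$, and $\gr_n$ is log homogeneous by Proposition \ref{fangln}; hence Proposition \ref{fixedpoints} shows at once that $\gr_n'$ is a log homogeneous compactification of $G'$. As in Section \ref{example} I would compute its colored fan from the closure $\overline{T'}$ of $T'$ in $\gr_n'$: since $T'$ is a subtorus of $T$, the complete fan of the toric variety $\overline{T'}$ is the trace on $V'$ of the complete fan of $\overline{T}$, and the colored fan of $\gr_n'$ consists of those traces contained in $-\mathcal{W}'$. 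Here $V'$ sits in $V$ by $a_i=a'_i$ for $1\leqslant i\leqslant r$ and $a_{r+i}=-a'_{r+1-i}$ for $1\leqslant i\leqslant r$, so that $V'=\{a\in V,\ a_i+a_{2r+1-i}=0\}$.

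The decisive point is that $-\mathcal{W}'$ is not a single cone: the defining condition $a'_{r-1}\leqslant -|a'_r|$ splits according to the sign of $a'_r$ into $C_+$ (where $a'_r\leqslant 0$) and $C_-$ (where $a'_r\geqslant 0$), and these two full-dimensional cones meet exactly along the facet $\{a'_r=0\}$ and have union $-\mathcal{W}'$. I would therefore exhibit two maximal cones of the fan of $\overline{T}$, one tracing onto each of them. The computation in Proposition \ref{fangln} applies verbatim to an arbitrary $f\in R$, not only to those producing cones of $Q$: each chart $\maf_{f,n}$ is an affine toric chart of $\overline{T}$ and the formulas of Definition \ref{bigdefi} give its cone as $\{0\leqslant\varepsilon(1)a_{g(1)}\leqslant\cdots\leqslant\varepsilon(n)a_{g(n)}\}$ for the signed permutation $(g,\varepsilon)$ read off from $f$; letting $f$ vary over $R$, every signed permutation occurs and one recovers the whole complete fan of $\overline{T}$.

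For $C_+$ I would take the cone $0\leqslant a_{r+1}\leqslant -a_r\leqslant a_{r+2}\leqslant -a_{r-1}\leqslant\cdots\leqslant a_{2r}\leqslant -a_1$, which already lies in $-\mathcal{W}$ (it is one of the $C_g$ with $g\in Q$); substituting $a_i=a'_i$ and $a_{2r+1-i}=-a'_i$ collapses the chain to $0\leqslant -a'_r\leqslant -a'_{r-1}\leqslant\cdots\leqslant -a'_1$, i.e. to $C_+$. For $C_-$ I would take the cone $0\leqslant a_r\leqslant -a_{r+1}\leqslant a_{r+2}\leqslant -a_{r-1}\leqslant\cdots\leqslant a_{2r}\leqslant -a_1$, obtained from the previous chain by interchanging the indices $r$ and $r+1$; the same substitution collapses it to $0\leqslant a'_r\leqslant -a'_{r-1}\leqslant\cdots\leqslant -a'_1$, i.e. to $C_-$. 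This second cone does \emph{not} lie in $-\mathcal{W}$ but in the chamber obtained from it by the transposition exchanging $a_r$ and $a_{r+1}$, which is precisely why — unlike in Proposition \ref{oddorth} — one must use the entire fan of $\overline{T}$ rather than only the cones of Proposition \ref{fangln}. Both traces $C_+,C_-$ are cones of the fan of $\overline{T'}$ contained in $-\mathcal{W}'$, hence belong to the colored fan of $\gr_n'$.

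Finally, as $\gr_n'$ is log homogeneous its colored fan is a smooth fan with support $-\mathcal{W}'$ (Section \ref{example}); since $C_+$ and $C_-$ are full-dimensional, have disjoint interiors, and satisfy $C_+\cup C_-=-\mathcal{W}'$, they are forced to be its maximal cones, so that the fan is exactly $C_+$, $C_-$ and their faces. The main obstacle is the second paragraph's observation: locating the cone of $\overline{T}$ that traces onto $C_-$, which lies outside $-\mathcal{W}$ and so is invisible to the computation of Proposition \ref{fangln}; once both cones are in hand the conclusion is routine.
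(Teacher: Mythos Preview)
Your argument is correct and follows the same route as the paper: invoke Proposition \ref{fixedpoints} for log homogeneity, compute the fan of $\overline{T'}$ as the trace on $V'$ of the fan of $\overline{T}$, exhibit two cones of the latter tracing onto $C_+$ and $C_-$, and conclude by the support condition. The specific cones you pick differ cosmetically from the paper's (you end your chains with $-a_1$ whereas the paper ends with $a_{2r}$, and for $C_-$ the paper uses $0\leqslant a_r\leqslant -a_{r+1}\leqslant -a_{r-1}\leqslant a_{r+2}\leqslant\cdots$), but they trace onto the same cones in $V'$. Your explicit remark that the cone producing $C_-$ lies outside $-\mathcal{W}$, and hence is not among the $C_g$ of Proposition \ref{fangln} but still belongs to the full fan of $\overline{T}$ via the same chart computation for general $f\in R$, is a point the paper uses tacitly; making it explicit is a genuine improvement in clarity.
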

\begin{proof}
The proof is similar to that of Proposition \ref{oddorth}. With the arguments given in this proof it suffices to observe that the trace of the following cone in $V$: 
$$\{(a_1,\ldots,a_{2r})\in V, \quad 0\leqslant -a_r \leqslant a_{r+1}\leqslant \cdots \leqslant -a_1 \leqslant a_{2r}\}$$
on $V'$ is $C_{+}$, the trace of 
$$\{(a_1,\ldots,a_{2r})\in V, \quad 0\leqslant a_r \leqslant -a_{r+1}\leqslant -a_{r-1}\leqslant a_{r+2}\leqslant  \cdots \leqslant -a_1 \leqslant a_{2r}\}$$
is $C_{-}$ and that $-\mathcal{W}'$ is the union of $C_{+}$ and $C_{-}$. 
\end{proof} 
Observe that the Weyl chamber $\mathcal{W}'$ is not smooth with respect to the lattice of one-parameter subgroups of $T'$. 
Therefore the canonical compactification of $G'$ is not smooth, 
and the compactification $\gr'_n$ is a minimal log homogeneous compactification, in the sense that it has a minimal number of closed orbits. 

\medskip

$\textbf{The symplectic case}.$
We suppose that $n=2r$ is even. We let $b$ be the scalar product with respect to the block antidiagonal matrix 
$$\left(
\begin{array}{cc}
0 & -J_r \\
J_r & 0
\end{array}
\right).$$
We have $G':=\op{Sp}(2r)$. 
We let $T'$ be the intersection of $T$ with $G'$ and $B'$ the intersection of $B$ with $G'$.
The maximal torus $T'$ is naturally isomorphic to the split torus $\mathbb{G}_m^r$ via the following morphism
$$\mathbb{G}_m^r\rw T',\quad (t_1,\ldots, t_r)\mapsto \op{diag}(t_1,\ldots,t_r,t_r^{-1},\ldots,t_1^{-1}).$$
The space $V'$ is therefore naturally isomorphic to $\mathbb{Q}^r$. It is contained in $V$ via the following linear map 
$$V'\rw V,\quad (a'_1,\ldots,a'_r,-a'_r,\ldots,-a'_1).$$
The Weyl chamber with respect to $B'$ is given by  
$$\mathcal{W}'=\{(a'_1,\ldots,a'_r)\in V,\quad a'_1\geqslant a'_2 \geqslant  \ldots \geqslant a'_r\geqslant 0\}.$$

\begin{prop}
The compactification $\gr_n'$ of $G'$ is the wonderful compactification.  
\end{prop}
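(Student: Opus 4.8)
The plan is to run the argument of Propositions \ref{oddorth} and \ref{evenorth} almost verbatim. First, by Proposition \ref{fixedpoints} the compactification $\gr_n'$ is log homogeneous, so by Section \ref{example} its colored fan can be read off from the closure $\overline{T'}$ of $T'$ in $\gr_n'$: since $T'$ is a subtorus of $T$, the fan of $\overline{T'}$ is the trace on $V'$ of the fan of $\overline{T}$ computed in Proposition \ref{fangln}, and the colored fan of $\gr_n'$ consists of those traced cones contained in $-\mathcal{W}'$.

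The key point is that the torus $T'$ and the inclusion $V'\hookrightarrow V$ are identical to those of the even orthogonal case; only the bilinear form $b$, and hence the Weyl chamber $\mathcal{W}'$, differs. For the symplectic group one has
$$-\mathcal{W}'=\{(a'_1,\ldots,a'_r)\in V,\quad a'_1\leqslant a'_2\leqslant\cdots\leqslant a'_r\leqslant 0\},$$
which is exactly the cone $C_{+}$ appearing in Proposition \ref{evenorth}. I would therefore reuse the computation made there: the trace on $V'$ of the cone
$$\{(a_1,\ldots,a_{2r})\in V,\quad 0\leqslant -a_r\leqslant a_{r+1}\leqslant\cdots\leqslant -a_1\leqslant a_{2r}\}$$
of the fan of $\overline{T}$ equals $C_{+}=-\mathcal{W}'$. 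Hence $-\mathcal{W}'$ is a full-dimensional cone of the colored fan of $\gr_n'$. Since $\gr_n'$ is complete its colored fan has support $-\mathcal{W}'$, and as it contains the whole cone $-\mathcal{W}'$ it must consist of $-\mathcal{W}'$ together with its faces. This is the fan of the canonical compactification, which has a single closed orbit; as $\gr_n'$ is smooth, it is the wonderful compactification.

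The only substantive difference from the even orthogonal case, and the step I would check most carefully, is that here $-\mathcal{W}'$ is a single cone rather than the union $C_{+}\cup C_{-}$, which is precisely what yields one closed orbit instead of two. Concretely this amounts to the combinatorial identity $-\mathcal{W}'=C_{+}$ above, together with the fact that $-\mathcal{W}'$ is smooth for the one-parameter lattice of $T'$: its rays $-(e_1+\cdots+e_k)$, $1\leqslant k\leqslant r$, form a $\mathbb{Z}$-basis of that lattice. I do not expect a genuine obstacle, the verification being the same routine bookkeeping as in the preceding propositions; the conceptual content is simply that the $C_r$ Weyl chamber, unlike the $D_r$ one, is already a smooth cone, so no subdivision is forced and the resulting compactification is wonderful.
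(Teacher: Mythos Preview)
Your argument is correct and follows the same route as the paper: use Proposition~\ref{fixedpoints} to get log homogeneity, then identify a cone in the fan of $\overline{T}$ whose trace on $V'$ equals $-\mathcal{W}'$, and conclude that the colored fan is $-\mathcal{W}'$ with its faces. The extra remarks you add (the comparison with $C_{+}$ from the even orthogonal case and the explicit check that $-\mathcal{W}'$ is a smooth cone) are fine elaborations but not needed beyond what the paper records.
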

\begin{proof}
The proof is similar to that of Proposition \ref{oddorth}. With the arguments given in this proof it suffices to observe that the trace of the following cone in $V$: 
$$\{(a_1,\ldots,a_{2r})\in V, \quad 0\leqslant -a_r \leqslant a_{r+1} \cdots \leqslant -a_1 \leqslant a_{2r}\}$$
on $V'$ is $-\mathcal{W}'$. 
\end{proof}

\bibliographystyle{plain} 
\bibliography{ma}

\end{document}